\titleformat{\section}[block]{\large\scshape\centering}{\thesection.}{1em}{} 
\titleformat{\subsection}[block]{\scshape\centering}{\thesubsection.}{1em}{} 
\newtheorem{theorem}{Theorem}[section]
\newtheorem*{theorem*}{Theorem}
\newtheorem{lemma}[theorem]{Lemma}
\newtheorem*{lemma*}{Lemma}
\newtheorem{proposition}[theorem]{Proposition}
\newtheorem{corollary}[theorem]{Corollary}
\newtheorem*{corollary*}{Corollary}
\newtheorem{question}[theorem]{Question}
\theoremstyle{definition}
\newtheorem{definition}[theorem]{Definition}
\newtheorem{example}[theorem]{Example}
\newtheorem{remark}[theorem]{Remark}
\newcommand{\cA}{{\mathcal{A}}}
\newcommand{\cB}{{\mathcal{B}}}
\newcommand{\cC}{{\mathcal{C}}}
\newcommand{\cS}{{\mathcal{S}}}
\newcommand{\cF}{{\mathcal{F}}}
\newcommand{\IC}{{\mathbb{C}}}
\newcommand{\ID}{{\mathbb{D}}}
\newcommand{\IH}{{\mathbb{H}}}
\newcommand{\IN}{{\mathbb{N}}}
\newcommand{\IR}{{\mathbb{R}}}
\newcommand{\IX}{{\mathbb{X}}}
\newcommand{\IZ}{{\mathbb{Z}}}
\newcommand{\IP}{{\mathbb{P}}}
\newcommand{\IE}{{\mathbb{E}}}
\newcommand{\be}{\begin{equation}}
\newcommand{\ee}{\end{equation}}
\newcommand{\diam}{\mathop{\rm{diam}}}
\renewcommand\Re{\operatorname{Re}}
\renewcommand\Im{\operatorname{Im}}
\newcommand\inrad{\mathrm{inrad}}
\numberwithin{equation}{section}
\title{\scshape{Power rate of convergence of discrete curves: framework and applications}}
\author{\scshape{Ilia Binder \footnote{Department of Mathematics, University of Toronto, Toronto, ON M5S 2E4 Canada; e-mail: ilia@math.toronto.edu} and Larissa Richards} \footnote{School of Mathematics, University of Leeds, Leeds,  LS2 9JT United Kingdom; e-mail: l.richards@leeds.ac.uk}}
\numberwithin{equation}{section}
\begin{document}
\maketitle

\begin{abstract}
We provide a general framework of estimates for convergence rates of random discrete model curves approaching Schramm Loewner Evolution (SLE) curves in the lattice size scaling limit. We show that a power-law convergence rate of an interface to an SLE curve can be derived from a power-law convergence rate for an appropriate martingale observable provided the discrete curve satisfies a specific bound on crossing events, the Kempannien-Smirnov condition, along with an estimate on the growth of the derivative of the SLE curve. We apply our framework to show that the exploration process for critical site percolation on hexagonal lattice converges to the SLE$_6$ curve with a power-law convergence rate.
\end{abstract}

 \singlespacing {\footnotesize\tableofcontents}

\section{Introduction and Background}

Introduced by Oded Schramm \cite{Schramm}, SLE is a one-parameter family of conformally invariant random curves in simply connected planar domains.
It is conjectured that these curves are the scaling limits of the various interfaces in critical planar lattice models of Statistical Physics.
These two-dimensional lattice models describe a variety of physical phenomenon including percolation, the Ising model, loop-erased random walk and the Potts model.
For several two dimensional lattice models at criticality, it has been shown that the discrete interfaces converge in the scaling limit to SLE curves \cite{LSW,lawler2016convergence,SS,MR2486487,MR3151886,MR2227824,BCL}.
All these proofs begin in the same manner, that is, by describing the scaling limit of some observable to the interface.
The limit is constructed from the interface itself through conformal invariance. Generally, the difficulty in the proof arises in how to deduce the strong convergence of interfaces from some weaker notions resulting in a need to solve some specific technical estimates. In this paper, we study the rate of the above-mentioned convergence. In particular, we obtain a power-law convergence rate to an SLE curve from a power-law convergence rate for a martingale observable under suitable conditions on the discrete curves.

The \textit{Loewner equation} is a partial differential equation that produces a Loewner chain which is a family of conformal maps from a reference domain onto a continuously decreasing sequence of simply connected domains. A continuous real-valued function called the \textit{driving term} controls the Loewner evolution. It is known that if the driving term is sufficiently smooth, then the Loewner equation generates a growing continuous curve. Conversely, given a suitable curve, one can define the associated conformal maps which satisfy Loewner equation and recover the driving term. Thus, there is a correspondence
\[
  \left\lbrace \text{Loewner curves} \right\rbrace \leftrightarrow \left\lbrace \text{their driving terms} \right\rbrace                                                                   \]
 Let us remark that no necessary and sufficient smoothness conditions are known for a continuous function to generate a Loewner curve.

\textit{Schramm-Loewner evolutions (SLE)} is the one-parameter family of random fractal curves in a reference domain (either unit disk $\ID$ or the upper half plane $\IH$) whose  Loewner  evolution is driven by a scaled standard one-dimensional Brownian motion. It is proven (\cite{MR2883396}, \cite{LSW}) that the scaled Brownian motion generates a Loewner curve a.s.

In the existing proofs of convergence to SLE, the following two schemes have been suggested:
\begin{itemize}
 \item Show that the driving processes convergence and then extend this to convergence of paths or,
 \item Show that the probability measure on the space of discrete curves is precompact and then show that the limiting curve can be described by Loewner evolution.
\end{itemize}

 Given a discrete random curve that is expected to have a scaling limit described by a variant of SLE, Kempannien and Smirnov in \cite{KS} provide the framework for both approaches building upon the earlier work of Aizenman and Burchard \cite{AB}. They show that under similar conditions to \cite{AB} one can deduce that an interface has subsequential scaling limits that can be described almost surely by Loewner evolutions. An important condition for Kempannien and Smirnov's results and our framework is what we call the \textit{KS Condition}, a uniform bound on specific crossing probabilities. The KS Condition (or one of the equivalent conditions) has been shown in \cite{KS} to be satisfied for the following models: FK-Ising model, random cluster representation of $q$-Potts model for $1\leq q\leq 4$, spin Ising model, percolation, harmonic explorer and chordal loop-erased random walk (as well as radial loop-erased random walk). On the other hand, the KS Condition fails for the uniform spanning tree, see \cite{KS}.

In \cite{V}, Viklund examines the first approach to convergence in order to develop a framework for obtaining a power-law convergence rate to an SLE curve from a power-law convergence rate for the driving function provided some additional geometric information, related to crossing events, along with an estimate on the growth of the derivative of the SLE map. To obtain the additional geometric information, Viklund introduces a geometric gauge of the regularity of a Loewner curve in the capacity parameterization called the \textit{tip structure modulus}. In some sense, the tip structure modulus is the maximal distance the curve travels into a fjord with opening smaller than $\epsilon$ when viewed from the point toward which the curve is growing. The framework developed is quite general and can be applied to several models. In \cite{V}, it is shown in the case of loop-erased random walk. However, it can be difficult to show the needed technical estimate on the tip structure modulus. We build upon these earlier works to show that if the condition required for Kempannien and Smirnov's framework \cite{KS} is satisfied then one is able to obtain the needed additional geometric information for Viklund's framework. The end result is to obtain a power-law convergence rate to an SLE curve from a power-law convergence rate for the martingale observable provided the discrete curves satisfy the KS Condition, a bound on annuli crossing events.

\begin{theorem}
 Given a random discrete Loewner curve that satisfies the KS Condition and a suitable martingale observable satisfying a power-law convergence rate, one can obtain a power-law convergence rate of the random Loewner curve to an SLE$_\kappa$ curve for some $\kappa\in (0,8)$.
\end{theorem}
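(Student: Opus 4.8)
The plan is to combine two previously developed machines: Viklund's framework (from \cite{V}), which extracts a power-law rate of convergence of the curves from (a) a power-law rate of convergence of the driving functions plus (b) a power-law a priori bound on the \emph{tip structure modulus} of the discrete curves plus (c) a derivative-growth estimate for the SLE$_\kappa$ Loewner flow; and the Kemppainen--Smirnov framework (from \cite{KS}), which shows that the KS Condition on crossing events yields tightness and, crucially, uniform regularity estimates on the discrete curves in the capacity parametrization. So the skeleton is: (i) upgrade the hypothesis ``power-law rate for the martingale observable'' to ``power-law rate for the driving function''; (ii) upgrade the KS Condition to a power-law bound on the tip structure modulus; (iii) feed (i), (ii), and the SLE derivative-growth estimate into Viklund's theorem to conclude. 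Step (iii) is essentially a citation once (i) and (ii) are in place.

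For step (i), I would argue as follows. The martingale observable $M_t$ is, by construction, a conformally covariant quantity built from the discrete domain slit by the curve up to capacity time $t$; its continuum limit is an explicit function of the SLE Loewner map $g_t$ and the driving term $W_t$. Writing the discrete observable in Loewner coordinates and comparing with the continuum expression, the difference is controlled by the convergence rate of the observable (hypothesis) together with distortion estimates for conformal maps (Koebe, and the fact that on the relevant scales the slit domains are non-degenerate, which is exactly where the KS Condition enters). One then inverts: since $W_t$ is recovered from the observable via a fixed, Lipschitz (on the relevant region) functional, a power-law rate for $M_t$ transfers to a power-law rate for the discrete Loewner driving function $W^\delta_t$ towards $\sqrt{\kappa}B_t$, uniformly on compact capacity intervals with overwhelming probability. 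Care is needed near the tip, where conformal distortion blows up; this is handled by stopping before the curve gets too close to closing a small fjord, using the KS Condition to say such stopping times are rare.

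For step (ii) --- which I expect to be the main obstacle --- I must show that the KS Condition implies a power-law upper bound on the tip structure modulus of the discrete curves. Recall the tip structure modulus roughly measures, at capacity scale, how far the curve penetrates into a fjord of harmonic/extremal width $\le \epsilon$ as seen from the growth point; a power-law bound says this penetration is $O(\epsilon^{\alpha})$ for some $\alpha>0$ with high probability. The KS Condition is precisely a uniform bound on the probability of an unexpectedly hard crossing of a conformal annulus, and Kemppainen--Smirnov already extract from it Hölder-type control of the curve and of its Loewner parametrization. The work here is to convert that annulus-crossing control into the fjord-penetration statement: a deep excursion into a narrow fjord forces many disjoint (or nested) annulus crossings at geometrically spaced scales, each of uniformly bounded probability, so the probability of penetration depth exceeding $\epsilon^{\alpha}$ decays like a power of $\epsilon$ after summing a geometric series in the number of scales. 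This is morally the Aizenman--Burchard multi-scale argument, but carried out in the capacity parametrization and phrased in terms of the tip structure modulus rather than the Hölder modulus, so the bookkeeping --- relating Euclidean scales, capacity increments, and extremal distances uniformly over the (random) curve --- is the delicate part.

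Once steps (i) and (ii) are established, Viklund's result \cite{V} applies verbatim with the SLE$_\kappa$ derivative-growth estimate (which holds for all $\kappa\in(0,8)$), giving a power-law rate of convergence of the discrete curve to the SLE$_\kappa$ curve in the supremum metric (after a common reparametrization), with the final exponent an explicit function of the input exponents from (i), (ii), and the derivative estimate. The percolation application then follows by verifying the hypotheses: the KS Condition for critical site percolation on the triangular lattice is known \cite{KS}, and a power-law rate for Cardy's-formula observable (Smirnov's observable) must be supplied, after which the general theorem yields the stated power-law rate to SLE$_6$.
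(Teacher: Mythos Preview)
Your three-step architecture matches the paper's exactly, and your sketch of step (ii)---KS Condition $\Rightarrow$ power-law tip structure modulus via an Aizenman--Burchard multi-scale count of forced annulus crossings---is on target (this is Proposition~\ref{tip}).

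The gap is in step (i). Your proposed mechanism---``$W_t$ is recovered from the observable via a fixed Lipschitz functional, so a rate for $M_t$ transfers to a rate for $W^\delta_t$ toward $\sqrt{\kappa}B_t$''---does not produce a Brownian motion to compare against; inverting the observable at a single time $t$ only hands you back the discrete $W^n_t$ you already possess, and nothing in that picture singles out $\sqrt{\kappa}B$. The paper's route, which you have not identified, is: Taylor-expand the approximate martingale identity $\IE[h(\hat g^n_{t_m}(\hat z)-\hat W^n_{t_m})\mid D_j]\approx h(\hat g^n_{t_j}(\hat z)-\hat W^n_{t_j})$ to second order in $\Delta W=W_{t_m}-W_{t_j}$ and first order in $\Delta t=t_m-t_j$, using the Loewner ODE to express the increment of $g_t(z)$ as $2\Delta t/(g_{t_j}(z)-W_{t_j})+O(n^{-s})$. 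This yields a linear relation $A(\xi)\,\IE[\Delta t]+B(\xi)\,\IE[\Delta W]+C(\xi)\,\IE[(\Delta W)^2]=O(n^{-s})$ with coefficients built from $h$. The BPZ equation \eqref{eq:BPZ} forces $A=-\kappa C$; the non-degeneracy hypothesis on $h$ at two well-chosen evaluation points then gives two independent such relations, from which the key moment estimates $|\IE[\Delta W\mid D_j]|\leq cn^{-s}$ and $|\IE[(\Delta W)^2-\kappa\Delta t\mid D_j]|\leq cn^{-s}$ are extracted (Proposition~\ref{keyest}). Only after this does Skorokhod embedding (Lemma~\ref{SkE}) manufacture the coupling with Brownian motion and deliver the power-law rate for the driving function (Theorem~\ref{drivingcvg}). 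A smaller correction: contrary to your remark, the KS Condition plays no role in step (i); the driving-function rate uses only the observable hypotheses (weak admissibility), and KS enters solely in step (ii).
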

\noindent See Theorem \eqref{mainthm} for the precise statement. As an application, we apply the above framework to establish the power rate of convergence of the Exploration Process of the critical hexagonal site percolation to SLE$_6$. In a subsequent paper {\cite{HarmExplIsing}}, we apply the framework to two more models, Harmonic Explorer and FK-Ising model. These applications involve a novel estimate similar to \cite{chelkak2020ising}. Let us note that our theorem easily implies rate of convergence for Loop-Erased Random Walk, see \cite{KS} for proof of satisfying KS condition and \cite{BJVK} for polynomial rate of convergence for the martingale observable. Of course, this does not give an explicit exponent for the rate of convergence as the proof in \cite{V}.

\subsection{The Space of Curves}

We will define curves in the same way as in \cite{AB} and \cite{KS}: \textit{planar curves} are equivalence classes of continuous mappings from $[0,1]$ to $\IC$ modulo reparameterizations. While it is possible to work with the entire space $C([0,1],\IC)$, it is nicer if we work with
\[
 C' = \left\{  f\in C([0,1],\IC) : \begin{array}{l} f \text{ is identically a constant  or } \\
f \text{ is not constant on any sub-intervals} \end{array} \right\}
\]
instead. On $C'$ we define an equivalence relation $\sim$ as follows: two functions $f_1$ and $f_2$ in $C'$ are equivalent if there exists an increasing homeomorphism $\psi:[0,1]\rightarrow[0,1]$ such that $f_2=f_1\circ \psi$ in which case we say $f_2$ is a reparameterization of $f_1$.

Thus, $\cS : = \{ [f] \; : \; f\in C' \}$ is the space of curves where $[f]$ is the equivalence class of the function $f$. The metric $d_{\cS}([f],[g]) = \inf \left\{ ||f_0-g_0 ||_{\infty} \; : \; f_0\in[f], \; g_0\in[g] \right\}$ gives $\cS$ the structure of a metric space. For a proof that $(\cS,d_{\cS})$ forms a metric space see Lemma 2.1 in \cite{AB}. For any domain $D\subset \IC$, let
\begin{align*}
 \cS_{\mathrm{simple}}(D) &= \{ [f] \; : \; f\in C', \; f((0,1))\subset D, \; f \; \mathrm{injective} \}, \\
 \cS_0(D) &= \overline{\cS_{\mathrm{simple}}(D)}.
\end{align*}

Let $\mathrm{Prob}(\cS)$ be the space of probability measures on $\cS$ equipped with the Borel $\sigma-$algebra $\mathcal{B}_{\cS}$ and the weak-* topology induced by continuous functions. Suppose $(\IP_n)$ is a sequence in $\mathrm{Prob}(\cS)$ and for each $n$, $\IP_n$ is supported on a closed subset of $\cS_{\mathrm{simple}}$. This can be assumed without loss of generality for discrete curves. If $\IP_n \rightarrow \IP$ weakly then $1=\lim\sup\IP_n(\cS_0) \leq \limsup\IP(\cS_0)$ by properties of weak convergence. So, $\IP$ is supported on $S_0$. See \cite{KS} for more information and comments on this probability structure.

Typically, the random curves that we are interested in connect two boundary points $a,b\in\partial D$ in a simply connected domain $D$. We will denote $\cS_{\mathrm{simple}}(D,a,b)$ for curves in $\cS_{\mathrm{simple}}(D)$ whose endpoints are $a$ and $b$. The curves we are considering in this paper satisfy the Loewner equation and so must either be simple or non self-traversing, i.e., curves that are limits of sequences of simple curves. The curves are usually defined on a lattice $L$ with small but finite lattice mesh. So, we can safely assume the random curve is (almost surely) simple as we could perturb the lattice and curve to remove any self-intersections, if necessary.

Although we work with arbitrary simply connected domains $D$, it will be convenient to work with reference domains $\ID=\{ z\in\IC \; : \; |z|<1 \}$ and $\IH = \{z\in\IC \; : \; \mathrm{Im}z >0\}$. We uniformize by the disk $\ID$ in order to work with a bounded domain. The conditions are conformally invariant so the choice of a particular uniformization domain is not important. To do this, we encode a simply connected domain $D$ other than $\IC$ and curve end points $a,b,\in\partial D$, if necessary accessible prime ends, by a conformal map $\phi:D\rightarrow\ID$ with $\phi(a)=-1, \; \phi(b) = 1$.

Following the generality outlined in \cite{KS}, our main object of study is $(\IP_{(D;a,b)})$ where
\begin{itemize}
 \item $(D;a,b)$ is a simply connected domain with two distinct accessible prime ends $a,b\in\partial D$.
 \item and $\IP$ is a probability measure supported on a closed subset of
\begin{equation}
 \left\{ \gamma\in \cS_{\text{simple}}(D;a,b) \text{: beginning and end points of } \phi(\gamma) \text{ are } -1 \text{ and } 1, \text{ respectively} \right\}. \label{set}
\end{equation} Here $\phi:(D;a,b)\rightarrow(\ID;-1,1)$ is a conformal map as above.
\end{itemize}

We assume that \eqref{set} is nonempty. In this case, there are plenty such curves, see Corollary 2.17 in \cite{MR1217706}. Since $(\cS,d_{\cS})$ forms a metric space, we can think about our family of probability measures as a sequence $((\IP_{(D^{\delta_n};a^{\delta_n},b^{\delta_n})}^n))_{n\in\IN}$ when discussing convergence.  Our goal is to study convergence rates of interfaces from statistical physic models to SLE. So in general, we are considering a sequence of interfaces for the same lattice model with shrinking lattice mesh $\delta_n \to 0$. Thus, $\IP_n$ is supported on curves defined on the $\delta_n$-mesh lattice.

\noindent\textsc{Theory of prime ends.} Here we will recall the basic definitions in the theory of prime ends. More information and proofs can be found in \cite[\S 2.4 and 2.5]{MR1217706}. A \textit{crosscut} of a bounded domain $D$ is an open Jordan arc $\mathcal{C}$ such that $\mathcal{C}\subset D$ and $\overline{\mathcal{C}}=\mathcal{C}\cup\{p_1,p_2\}$ where $p_j\in\partial D, \; j=1,2$. A sequence of crosscuts $\{\mathcal{C}_n\}$ is called a \textit{null-chain} if
\begin{enumerate}
    \item $\overline{\mathcal{C}}_n\cap\overline{\mathcal{C}}_{n+1} = \emptyset$ for any $n$
    \item $\mathcal{C}_n$ separates $\mathcal{C}_{n+1}$ from $\mathcal{C}_1$ for any $n$
    \item $\diam \mathcal{C}_n\rightarrow 0$ as $n\rightarrow\infty$.
\end{enumerate}

We say that two null chains $(\mathcal{C}_n)$ and $(\mathcal{C}'_n)$ are equivalent if for any $m$ there exists $n$ so that $\mathcal{C}'_m$ separates $\mathcal{C}_n$ from $\mathcal{C}_1$ and $\mathcal{C}_m$ separates $\mathcal{C}'_n$ from $\mathcal{C}'_1$. This equivalence relation forms equivalence classes called \textit{prime ends} of $D$.

Notice that for a Jordan domain such as $\ID$ each boundary point corresponds one-to-one to a prime end. So, we can almost forget about the theory of prime ends when we use the definition \eqref{set} for what is meant by a set of simple curves that connect two fixed boundary points. The prime end theorem states that there is a one-to-one correspondence between the prime ends of a simply connected domain and the prime ends of $\ID$: Given a mapping $\Psi$ from $D$ onto $\ID$, there is a bijection $f$ from $\partial\ID$ to the prime ends of $D$ so that for any $\xi\in\partial\ID$, any null-chain of $f(\xi)$ in $D$ is mapped by $\Psi$ to a null-chain of $\xi$ in $\ID$.

\begin{example}
Consider the slit domain $\IH\backslash[0,i]$. Using the concept of prime end, we can distinguish between the right-hand side and the left-hand side of a boundary point $iy, \; 0<y<1$.
\end{example}

We say that a prime end $a$ of $D$ is \textit{accessible} if there is a Jordan arc $P:[0,1]\rightarrow\IC$ such that $P(0,1]\subset U, P(0)\in\partial U$ and $P$ intersect all but finitely many crosscuts of a null-chain of $a$. This boundary point $P(0)$ is called the \textit{principal point} of $a$, denoted by $\Pi(a)$, and for an accessible prime end it is unique. Let $\Psi:D\rightarrow\ID$ be a conformal map and $\xi\in\partial\ID$ be the boundary point corresponding to $a$. Then $a$ is accessible if and only if the radial limit $\lim_{t\to 1} \Psi^{-1}(t\xi)$ exists. If the limit exists, then $\lim_{t\to 1} \Psi^{-1}(t\xi)=\Pi(a)$, see \cite[Corollary 2.17]{MR1217706}.

\noindent\textsc{Lattice Approximation}. A \textit{lattice approximation} of $(D; a,b)$ is constructed where $a$ and $b$ are accessible prime ends of $D$, being careful when working in neighbourhoods of $a$ and $b$.
Let $L$ be a lattice, i.e. an infinite graph consisting of periodically repeating parts such as $L = \IZ^2$. Let $\delta L$ be $L$ scaled by the constant $\delta>0$.

Let $w_0\in D$. For small $\delta$, there are vertices of $\delta L$ in a neighbourhood of $w_0$. Let $D^\delta$ be the maximal connected sub-graph of $\delta L$ containing these vertices so that $V(D^\delta)\cup\{\Pi(a),\Pi(b)\}$ and the edges lie inside $D$ except for the edges that end at $a$ or $b$ in the following sense. Let $P$ be a curve from $a$ to $b$. Let $t_1$ be the smallest $t$ such that $P(t)$ intersects an edge or vertex of $D^\delta$. Else remove the edge that $P(t_1)$ is lying on and choose one of the endpoints to be $a^\delta$. If removing the edge cuts the graph into two disconnected components then discard the one that is not connected to vertices near $w_0$. The same thing can be done for the largest $t=t_2$ so that $P(t)$ intersects an edge or a vertex of $D^\delta$ to obtain $b^\delta$. Let $P_1$ be the curve $P(t), \; t\in[0,t_1]$ with the piece of removed edge. That is,  a simple curve connecting $a$ to $a^\delta$. Similarly, let $P_2$ be the curve $P(t),\; t \in[t_2,1]$ with the piece of removed edge. That is, a simple curve connecting $b^\delta$ to $b$. The random curve $\gamma^\delta$ is the random path on $D^\delta$ joined with $P_1$ and $P_2$ connecting $a$ and $b$ to $D^\delta$.

\subsection{Loewner Evolution and SLE}
\label{LEandSLE}
To begin, we sketch a derivation of the half-plane version of the Loewner equation, called the \textit{chordal} Loewner equation, more details can be found in \cite{Beliaev2019ConformalMA} and \cite{MR2129588}.

Let $\gamma_\ID$ be some continuous nonself-crossing (possibly self touching) curve in $\overline{\ID}$ parameterized by $s\in[0,1]$ such that $\gamma_\ID(0)=-1$ and $\gamma_\ID(1)=1$. Fix conformal transformation $\Phi:\ID\rightarrow\IH$ such that $z\mapsto i \frac{z+1}{1-z}$. Then $\gamma_\IH = \Phi(\gamma_\ID)$ is a simple curve in $\IH$ with $\gamma_\IH(0)=0\in\IR, \; \gamma_\IH((0,1))\subset\IH,$ and $|\gamma_\IH(t)|\rightarrow\infty$ as $t\rightarrow 1$. One can encode continuous simple curves $\gamma_\IH$ from $0$ to $\infty$ in the closed upper half plane $\overline{\IH}$ via Loewner's evolution. As a convention, the driving term of a random curve in $(\ID,-1,1)$ means the driving term in $\IH$ after the transformation $\Phi$ using the half-plane capacity parameterization.

To this end, we will start by defining the compact hulls of $\IH$. Let $K_s$ denote the \textit{hull} of $\gamma_{\ID}[0,s]$. That is, $K_s$ is the complement of the connected component of $\overline{\IH}\backslash\gamma_{\IH}[0,s]$ containing $\infty$ and let $t(s) = \text{hcap}(K_s)$ be the \textit{half plane capacity} of $K_s$. Observe that $t(s)$ is non-decreasing but it could remain constant for some nonzero time and the hulls $K_s$ could remain the same even if $\gamma_{\ID}$ is the limit of simple curves. What could happen is that: (a) for some $s\in(0,1)$, the tip of $\gamma_{\IH}(s)$ is not visible from $\infty$ or (b) $\gamma_{\IH}(s+s_0)$ travels along the boundary of $K_s$ for a time $s_0>0$ which would not change the hull or (c) $\gamma_{\ID}$ reaches $+1$ for the first time before $s=1$ or (d) $t(s)$ remains bounded as $s\rightarrow 1$ which can happen if $\gamma_{\IH}$ goes to $\infty$ very close to $\IR$. If none of (a)-(d) happen and $t(s)$ is strictly increasing then, $\gamma_{\IH}$ can be described by Loewner evolution.

\begin{figure}[ht]
 \centering
 \begin{subfigure}[h]{0.4\textwidth}
 \centering
 \includegraphics[width = \linewidth]{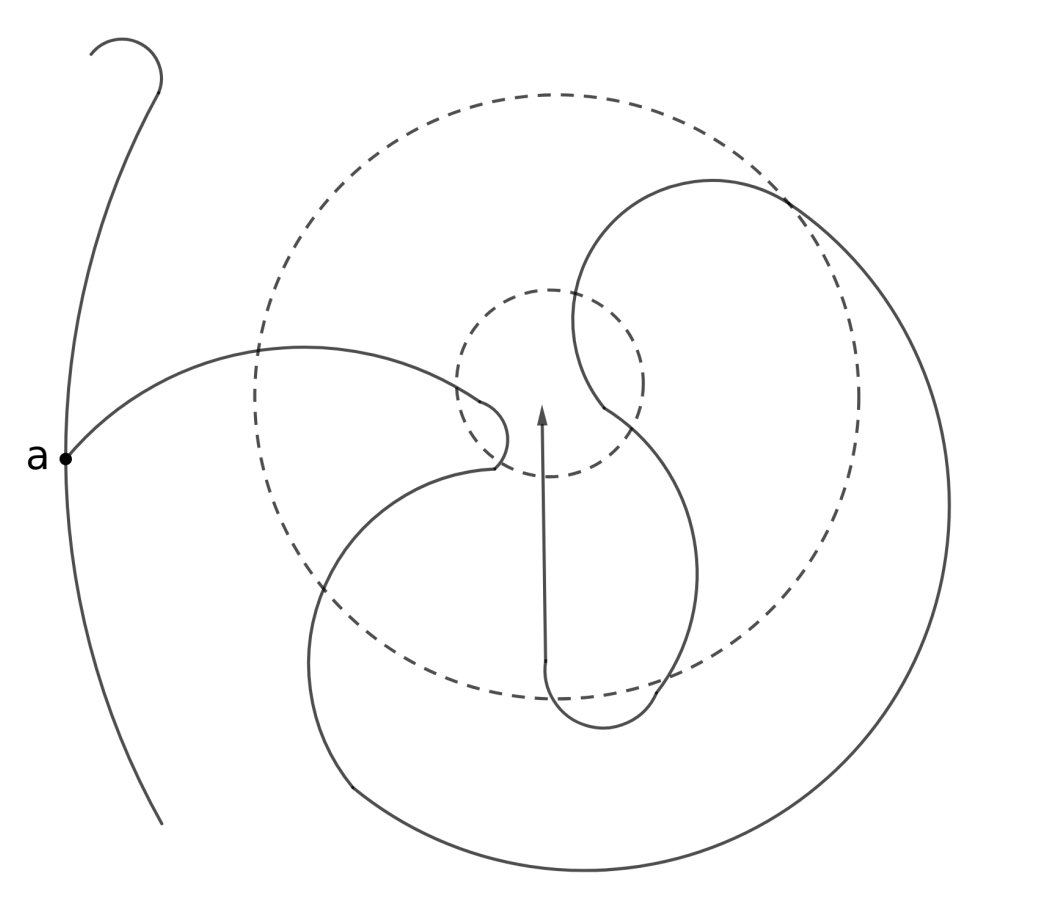}
 \end{subfigure}
 \begin{subfigure}[h]{0.4\textwidth}
 \centering
 \includegraphics[width=\linewidth]{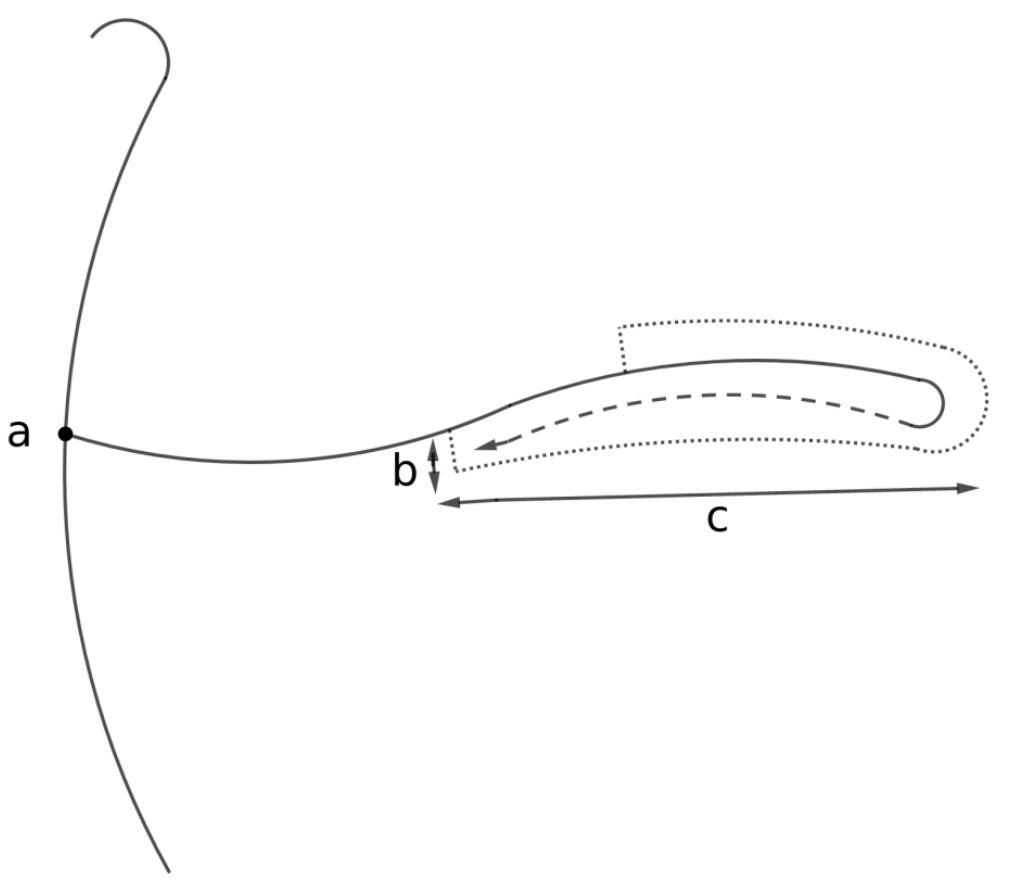}
 \end{subfigure}
 \caption{In order for a curve to be described by Loewner equation, the tip of the curve must remain visible at all times which excludes a certain 6-arm event: When the radius of the inner circle goes to zero, the portion of the curve that has gone inside the fjord is no longer visible from a distant reference point and so Loewner equation will not describe this portion of the curve. It also excludes long runs along the boundary of the domain or along the earlier part of the curve. If this is allowed to happen, then the driving process is discontinuous. So, it fails to be described by Loewner equation.}
 \label{badeventsLE}
\end{figure}

In this case, for each time $t\geq 0$, there is a unique conformal map $g_t$ from $H_t:=\IH\backslash\gamma_{\IH}[0,t]$ to $\IH$ satisfying the hydrodynamic normalization: $g_t(\infty)=\infty$ and $\displaystyle\lim_{z\to\infty} [g_t(z)-z]=0$.
Then around infinity we have:
\[
 g_t(z) = z + \frac{a_1(t)}{z} + \frac{a_2(t)}{z^2} +\cdots
\]
where $a_1(t) = \text{hcap}(K_t)$. Thus, $a_1(t)$ is monotone increasing and continuous. We can reparameterize $\gamma_{\IH}$ in such a way that $a_1(t)=2t$. This is called \textit{parameterization by capacity}.  Assuming the above normalization and parameterization, the family of mappings $(g_t)_{t\in[0,T]}$ satisfies the upper half plane version of Loewner differential equation:
\begin{align}
 \frac{\partial g_t}{\partial t}(z) = \frac{2}{g_t(z) - W_t} \; \; t\in[0,T] \label{LDE}
\end{align}
where $t\mapsto W_t$ is continuous and real-valued. $W_t$ is called the \textit{driving function}. It can be shown that $g_t$ extends continuously to $\gamma_{\IH}(t)$ and $W_t=g_t(\gamma_{\IH}(t))$. We say that $\gamma_{\IH}$ is determined by $W$.

On the other hand, suppose that $W$ is a real valued continuous function. For $z\in\overline{\IH}$, solve the Loewner differential equation \eqref{LDE} with $g_0(z)=z$ up to $\tau(z) = \inf\{ t>0: g_t(z) - W(z) = 0 \}.$ Let $K_t:=\{z\in\overline{\IH} \; : \; \tau(z)\leq t\}$. Then $g_t:\IH\backslash K_t\rightarrow \IH$ is a conformal map and $g_0(z)=z$.  The necessary and sufficient condition for a family of hulls $K_t$ to be described by Loewner equation with a continuous driving function is given in the following proposition.
\begin{proposition}\label{hullsLE} Let $T>0$ and $(K_t)_{t\in[0,T]}$ be a family of hulls such that $K_s\subset K_t$ for any $s<t$ and let $H_t = \IH\backslash K_t$.
\begin{itemize}
 \item If $(K_t\backslash K_S)\cap \IH\neq \emptyset$ for all $s<t$ then $t\mapsto \text{hcap}(K_t)$ is strictly increasing.
 \item If $t\mapsto H_t$ is continuous in Caratheodory kernel convergence, then $t\mapsto \text{hcap}(K_t)$ is continuous.
 \item Assume that $\text{hcap}(K_t)=2t$ (this time reparameterization is possible given the first two assumptions). Then there is a continuous driving function $W_t$ such that $g_t$ satisfies Loewner equation \eqref{LDE} if and only if for each $\delta>0$ there exists $\epsilon>0$ so that for any $0\leq s < t \leq T, \; |t-s|<\delta$, a connected set $C\subset H_s$ can be chosen such that $\text{diam}(C)<\epsilon$ and $C$ separates $K_t\backslash K_s$ from infinity.
\end{itemize}

\end{proposition}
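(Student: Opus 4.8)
The plan is to prove the three bullets in order, since each relies on the previous one, and the real content is in the third bullet (the existence of a continuous driving function), which is essentially the Loewner-chain characterization from \cite{MR2129588} or \cite{Beliaev2019ConformalMA}.

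\emph{First bullet (strict monotonicity of $\mathrm{hcap}$).} I would use the standard hydrodynamic-normalization formula for half-plane capacity: if $\mathrm{hcap}(K) = \lim_{z\to\infty} z(g_K(z)-z)$, then for $s<t$ one has $\mathrm{hcap}(K_t) - \mathrm{hcap}(K_s) = \mathrm{hcap}(g_s(K_t\setminus K_s))$, because $g_t = \widetilde g \circ g_s$ where $\widetilde g$ maps $H_s \setminus (K_t\setminus K_s)^{g_s}$ onto $\IH$, and half-plane capacities add under composition of hull maps. Since $(K_t\setminus K_s)\cap\IH\neq\emptyset$, its image under $g_s$ is a nonempty relatively open subset of $\IH$ (as $g_s$ is conformal on $H_s$), hence a nondegenerate hull, and a nondegenerate hull has strictly positive half-plane capacity — e.g. by the probabilistic representation $\mathrm{hcap}(K) = \lim_{y\to\infty} y\,\mathbb{E}_{iy}[\Im B_{\tau}]$, which is positive whenever $K$ has nonempty interior meeting $\IH$. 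This gives $\mathrm{hcap}(K_t) > \mathrm{hcap}(K_s)$.

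\emph{Second bullet (continuity of $\mathrm{hcap}$).} Here I would invoke the fact that Carath\'eodory kernel convergence of the domains $H_t$ forces locally uniform convergence of the normalized maps $g_t$ (this is precisely the content of the Carath\'eodory kernel theorem, adapted to the hydrodynamic normalization as in \cite[Ch.~4]{MR2129588}), and that $\mathrm{hcap}(K_t)$ is read off as the coefficient $a_1(t)$ in the expansion of $g_t$ at $\infty$. Locally uniform convergence of $g_t$ on a fixed neighborhood of $\infty$ transfers to convergence of the Laurent coefficient $a_1(t)$ by a Cauchy-integral estimate, so $t\mapsto\mathrm{hcap}(K_t)$ is continuous. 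With both monotonicity and continuity in hand, the reparameterization making $\mathrm{hcap}(K_t)=2t$ is legitimate (possibly after noting the image is an interval).

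\emph{Third bullet (the separation criterion).} This is the main obstacle and the heart of the proposition. For the forward direction, assume $g_t$ solves \eqref{LDE} with continuous $W$. Given $\delta$, uniform continuity of $W$ on $[0,T]$ gives a modulus; for $|t-s|<\delta$ small, the set $K_t\setminus K_s$, pushed forward by $g_s$, lies in a small neighborhood of $W_s$ in $\overline\IH$ — quantitatively because $g_s(K_t\setminus K_s)$ is a hull of capacity $2(t-s)$ ``generated'' by the driving function $W_{s+\cdot}-W_s$, and one has the a priori bound that such a hull is contained in a half-disk of radius $O(\sqrt{t-s}+\osc)$ around $W_s$ (this is the classical estimate $K\subset \{|z-W_s|\le c(\sqrt{\mathrm{hcap}}+\text{osc of }W)\}$). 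Then $C := g_s^{-1}$ of a small semicircle around $W_s$ is a connected set in $H_s$ separating $K_t\setminus K_s$ from $\infty$; its diameter is controlled via the Koebe distortion theorem and the fact that $g_s^{-1}$ has bounded derivative away from the singularity, giving $\diam(C)<\epsilon$. For the converse — which is the genuinely delicate part — I would follow the argument that the separation condition implies: (i) the half-plane capacity parameterization is continuous and proper, so $g_t$ is well-defined for all $t\in[0,T]$; (ii) defining $W_t$ as the image under $g_s$ (in the limit) of the ``small connected set'' $C_s$ separating the incremental hull, one checks that $W_t$ is a single well-defined point because $\diam(C)\to 0$ pins the incremental hull to a point; and (iii) $t\mapsto W_t$ is continuous, again because the separating sets shrink uniformly. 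The delicate estimate is showing that $\cap_{t>s} g_s(\overline{K_t\setminus K_s})$ is a single point of $\IR$ and that the resulting function $W$ is continuous — this is where the uniform $\epsilon$–$\delta$ quantification in the hypothesis is used crucially, rather than just pointwise Carath\'eodory convergence. I expect this converse direction to be the step requiring the most care, and I would present it by reducing to the known equivalence in \cite[Theorem~4.6 or similar]{MR2129588}, verifying that our separation hypothesis matches theirs.
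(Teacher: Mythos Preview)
Your sketch is correct and follows the standard route found in the literature. Note, however, that the paper does not actually prove this proposition: immediately after the statement it simply writes that the first two facts are straightforward and refers to \cite{MR2129588}, while for the third claim it cites \cite{Lawler_2001}. Your outline is precisely the argument one finds in those references (additivity of $\mathrm{hcap}$ under composition for monotonicity, the Carath\'eodory kernel theorem for continuity of the coefficient $a_1(t)$, and the hull-diameter bound---which is the paper's own Lemma~\ref{DiamKt}---together with the shrinking-crosscut argument for the equivalence in the third bullet). So there is no discrepancy: you have supplied the content the paper chose to outsource.
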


The first two facts on capacity are straightforward to deduce, see \cite{MR2129588}. A proof of the third claim can be found in \cite{Lawler_2001}. We call the curves generated by such hulls $\IH-$\textit{Loewner curves}. Similarly, we can define $\ID-$\textit{Loewner curves} with $\IH$ replaced by $\ID$, $\gamma(0)$ is on $\partial\ID$ and $\displaystyle\lim_{t\to\infty}\gamma(t)=0$. These are exactly the curves which can be described using radial Loewner equation driven by a continuous driving function, see Theorem 1 of \cite{pomm}.

We will use the following observation which is Lemma 2.1 of \cite{LSW}.
\begin{lemma}[Diameter bounds on $K_t$]\label{DiamKt}
 There is a constant $C>0$ such that the following always holds. Let $W: [0,\infty)\rightarrow\IR$ be continuous and let $(K_t, \; t\geq 0)$ be the corresponding hull for Loewner's chordal equation with driving function $W$. Set $k(t):= \sqrt{t} + \max \left\{ |W(s) - W(0) \; : \; s\in[0,t] \right\}$, then
 \[
  \forall t\geq 0 \;\; C^{-1}k(t) \leq \mathrm{diam}(K_t) \leq C k(t).
 \]
 Similarly, when $K_t\subset \overline{\ID}$ is the radial hull for a continuous driving function $W: [0,\infty)\rightarrow\partial\ID$, then
 \[
  \forall t\geq 0 \;\; C^{-1}\min\{k(t),1\} \leq \mathrm{diam}(K_t) \leq C k(t).
 \]
\end{lemma}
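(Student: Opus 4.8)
Throughout put $M_t:=\max_{s\in[0,t]}|W_s-W_0|$, so that $k(t)=\sqrt t+M_t$; translating, we may assume $W_0=0$. Two facts will be used freely. First, the estimate is covariant under the scaling $W_s\mapsto\lambda^{-1}W_{\lambda^2 s}$, $K_t\mapsto\lambda^{-1}K_{\lambda^2 t}$ (which is the structural reason that exactly the two scale‑matching terms $\sqrt t$ and $M_t$ occur), although the arguments below work for every $t$ directly. Second, $0=W_0\in\overline{K_t}$ for every $t>0$ (points $i\varepsilon$ are swallowed in time $O(\varepsilon^2)$), so $\mathrm{rad}(K_t):=\sup\{|z|:z\in\overline{K_t}\}\le\diam(K_t)$.

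\emph{Upper bound, chordal case.} Fix $z_0\in\IH$ with $|z_0|>4k(t)$; then $|z_0|>4M_t$ and $|z_0|>4\sqrt t$. Run the Loewner ODE $\partial_s g_s(z_0)=2/(g_s(z_0)-W_s)$ and let $T\le t$ be the largest time up to which $|g_s(z_0)|\ge|z_0|/2$ holds (the solution persists as long as $|g_s(z_0)-W_s|$ stays away from $0$). On $[0,T]$ we have $|g_s(z_0)-W_s|\ge|z_0|/2-M_t\ge|z_0|/4$, hence $|\partial_s g_s(z_0)|\le 8/|z_0|$ and
\[
 |g_s(z_0)-z_0|\ \le\ 8s/|z_0|\ \le\ 8t/|z_0|\ <\ 2\sqrt t\ <\ \tfrac12|z_0|\qquad(0\le s\le T).
\]
Thus $|g_s(z_0)|>|z_0|/2$ throughout $[0,T]$, so the defining inequality cannot degenerate before time $t$; hence $T=t$, $|g_s(z_0)-W_s|\ge|z_0|/4>0$ for all $s\le t$, so $\tau(z_0)>t$ and $z_0\notin K_t$. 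As $z_0$ was arbitrary, $K_t\subseteq\overline{B(0,4k(t))}$, and since $0\in\overline{K_t}$ this gives $\diam(K_t)\le 8k(t)$.

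\emph{Lower bound, chordal case.} Bound $\diam(K_t)$ below by a multiple of $\sqrt t$ and of $M_t$ separately. Since $0\in\overline{K_t}$ we have $K_t\subseteq\overline{B(0,\diam K_t)}\cap\overline\IH=:D$, and $\mathrm{hcap}(D)=(\diam K_t)^2$ (because $z\mapsto z+(\diam K_t)^2/z$ uniformizes $\IH\setminus D$); monotonicity of half‑plane capacity and the normalization $\mathrm{hcap}(K_t)=2t$ then give $\diam(K_t)\ge\sqrt{2t}$. Next, for each $s\le t$ the map $g_s$ extends continuously to the tip with $g_s(\gamma(s))=W_s$, so the standard distortion estimate $|g_A(w)-w|\le 3\,\mathrm{rad}(A)$ (see \cite{MR2129588}) applied at $w=\gamma(s)$, together with $|\gamma(s)|\le\mathrm{rad}(K_s)$, gives
\[
 |W_s|\ \le\ |\gamma(s)|+|g_s(\gamma(s))-\gamma(s)|\ \le\ 4\,\mathrm{rad}(K_s)\ \le\ 4\,\diam(K_t),
\]
and maximizing over $s$ yields $M_t\le 4\,\diam(K_t)$. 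Hence $\diam(K_t)\ge\tfrac12\max\{\sqrt{2t},\tfrac14 M_t\}\ge c\,k(t)$. (If $W$ generates no curve, replace $\gamma(s)$ by the prime end of $\overline{K_s}$ carried to $W_s$ by $g_s$; the inequalities are unchanged.)

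\emph{Radial case and the main difficulty.} In the disc $\diam(K_t)\le 2$ always, which is precisely what the truncation $\min\{k(t),1\}$ encodes. Once $\sqrt t\ge 1$ both bounds are trivial: $\min\{k(t),1\}=1$, while the conformal radius of $\ID\setminus K_t$ at $0$ equals $e^{-t}$ and $\overline{K_t}$ meets $\partial\ID$ at $W_0$, so $\diam(K_t)\ge 1-e^{-t}\ge 1-e^{-1}$. For small $t$ one transports the problem to $\IH$ by a fixed Möbius map sending $W_0$ to $0$; on a bounded time interval the radial flow and the chordal flow with the transported driving term differ only by bounded multiplicative constants and a bounded time change, and the map is bi‑Lipschitz near $W_0$, so the chordal bounds transfer with the chord length $|W_s-W_0|$ playing the role of $M_t$. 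The only real content of the lemma is the chordal upper bound — that the whole hull, not merely its footprint on $\IR$, stays within $O(\|W-W_0\|_\infty+\sqrt t)$ of the driving point — and the bootstrap above settles it in a few lines; the rest is bookkeeping of constants and the mildly tedious comparison of the two Loewner flows at small times.
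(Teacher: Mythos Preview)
The paper does not actually prove this lemma; it merely cites it as Lemma~2.1 of \cite{LSW} (``We will use the following observation which is Lemma 2.1 of \cite{LSW}''). So there is no in-paper argument to compare against, and what you have written is strictly more than the paper provides.

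Your chordal argument is correct and is essentially the standard one. The upper bound bootstrap is clean; the lower bound via $\mathrm{hcap}(\overline{B(0,r)}\cap\overline{\IH})=r^2$ together with $|g_s(z)-z|\le 3\,\mathrm{rad}(K_s)$ is the right idea. One small point: when $W$ does not generate a curve, your parenthetical fix via prime ends is a bit vague. A more robust way to get $|W_s|\le 4\,\mathrm{rad}(K_s)$ is to note that $g_s$ extends by Schwarz reflection across $\IR\setminus\overline{K_s}\cap\IR$, that $W_s$ lies between $g_s(b_s^-)$ and $g_s(a_s^+)$ where $[b_s,a_s]=\overline{K_s}\cap\IR$, and then apply $|g_s(x)-x|\le 3\,\mathrm{rad}(K_s)$ at $x=a_s^+$ and $x=b_s^-$.

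Your radial case is only sketched. The large-$t$ argument is fine (the conformal-radius bound gives a point of $K_t$ within $e^{-t}$ of $0$, and $\overline{K_t}$ meets $\partial\ID$, so $\diam(K_t)\ge 1-e^{-t}$). For small $t$, the assertion that ``the radial flow and the chordal flow \dots\ differ only by bounded multiplicative constants and a bounded time change'' is true and is the standard route (see e.g.\ \cite{MR2129588}), but it does require the explicit comparison lemma between the two Loewner equations; as written this step is more of a pointer than a proof. Since the paper itself only cites \cite{LSW}, your level of detail is still at least as complete as what appears there.
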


The inverse mapping $f_t:=g_t^{-1}$ where $g_t$ is the Loewner flow given above satisfies the reverse Loewner partial differential equation
\begin{align}
 \partial_t f_t = -f'_t \frac{2}{z - W_t} \; \; f_0(z)=z \label{rLE}.
\end{align}

A \textit{Loewner pair} $(f,W)$ consists of a function $f(t,z)$ and a continuous function $W(t), \; t\geq 0$ where $f$ is a solution to the reverse Loewner equation \eqref{rLE} with $W$ as the driving term. A sufficient condition for $(f,W)$ to be generated by a curve $\gamma$ is that the limit
\[
 \gamma(t) = \lim_{d\rightarrow 0^+} f(t, W(t) + id)
\]
exists for all $t\geq 0$ and that $t\mapsto \gamma(t)$ is continuous. In the radial case, $\gamma(t) = \lim_{d\rightarrow 0^+} f(t, (1-d)W(t))$, see Theorem 4.1 of \cite{MR2883396}.

If the driving term is H\"older continuous, then the existence of the curve and its regularity in the capacity parameterization is determined by the local behaviour at the tip, i.e., the growth of the derivative of the conformal map close to preimage of the tip.

\begin{proposition} Let $(f,W)$ be a $\ID-$Loewner pair and assume that $W(t) = e^{i\theta(t)}$ where $\theta(t)$ is a H\"older-$\alpha$ on $[0,T]$ for some $\alpha\leq 1/2$. Then the following holds. Suppose there are $c<\infty, \; d_0>0,$ and $0\leq\beta <1$ such that
\[\sup_{t\in[0,T]} d|f_t'((1-d)W(t))|\leq c d^{1-\beta} \; \text{ for all } d\leq d_0.\] Then $(f,W)$ is generated by a curve that is H\"older-$\alpha(1-\beta)$ continuous on $[0,T]$.
The analogous statement holds for $\IH-$Loewner pairs.
\end{proposition}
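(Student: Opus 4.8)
The plan is to estimate the modulus of continuity of the candidate curve $\gamma(t)=\lim_{d\to 0^+}f_t((1-d)W(t))$ directly, working in the radial setting and reducing the two-parameter oscillation $|\gamma(t)-\gamma(s)|$ to a spatial estimate on $f_t$ plus a transfer of the time increment through the Loewner flow. First I would write, for $s<t$ with $|t-s|$ small,
\[
\gamma(t)-\gamma(s)=\bigl(f_t((1-d)W(t))-f_s((1-d)W(t))\bigr)+\bigl(f_s((1-d)W(t))-f_s((1-d)W(s))\bigr)+\text{(limit corrections)},
\]
and choose the auxiliary parameter $d$ as a suitable power of $|t-s|$ (the natural choice being $d\asymp|t-s|^{1/2}$, matching the capacity scaling). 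The second bracket is controlled by integrating $f_s'$ along the boundary arc between $(1-d)W(s)$ and $(1-d)W(t)$, whose length is $O(d)+O(|\theta(t)-\theta(s)|)=O(d)+O(|t-s|^\alpha)$ by H\"older-$\alpha$ continuity of $\theta$; the derivative bound hypothesis $|f_s'((1-d)W(s))|\le c\,d^{-\beta}$, together with standard distortion estimates (Koebe) to propagate the bound to nearby boundary points at distance $\lesssim d$ from the circle, then yields a contribution of order $d^{1-\beta}+d^{-\beta}|t-s|^\alpha$.

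The first bracket, the genuine time increment at a fixed boundary point, is where the reverse Loewner equation \eqref{rLE} enters: I would estimate $f_t((1-d)W(t))-f_s((1-d)W(t))$ by integrating $\partial_\tau f_\tau = -f_\tau'\,\tfrac{2}{\,\cdot\,-W_\tau}$ over $\tau\in[s,t]$, bounding the integrand using the derivative hypothesis and the fact that the evaluation point $(1-d)W(t)$ stays at distance $\gtrsim d - \sup_{\tau\in[s,t]}|W(\tau)-W(t)| \gtrsim d$ from the singularity $W_\tau$ once $|t-s|^\alpha \ll d$, i.e. once $d$ is chosen as above with $\alpha\ge 1/2$ giving the cleanest bound (and for $\alpha<1/2$ one adjusts the power of $d$). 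This produces a contribution of order $|t-s|\cdot d^{-\beta}\cdot d^{-1}=|t-s|\,d^{-1-\beta}$. The limit corrections (replacing the limiting value $\gamma$ by $f$ evaluated at small but positive $d$) are handled by the same derivative estimate integrated radially inward from $(1-d)W(\cdot)$, contributing $O(d^{1-\beta})$. Balancing all terms with $d\asymp |t-s|^{1/2}$ gives $|\gamma(t)-\gamma(s)|\lesssim |t-s|^{(1-\beta)/2}+|t-s|^{\alpha-\beta/2}$; since $\alpha\le 1/2$, the governing exponent is $\alpha(1-\beta)$ after the correct bookkeeping, i.e. one should instead take $d\asymp|t-s|^{\alpha}$ so that the boundary-arc term $d^{-\beta}|t-s|^\alpha\asymp d^{1-\beta}\asymp|t-s|^{\alpha(1-\beta)}$ and the other terms are no worse.

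Once the uniform modulus of continuity $|\gamma(t)-\gamma(s)|\lesssim|t-s|^{\alpha(1-\beta)}$ is established on $[0,T]$, continuity of $\gamma$ follows immediately, and then the sufficient condition recalled just before the statement (existence of the radial limit for all $t$ and continuity of $t\mapsto\gamma(t)$, Theorem 4.1 of \cite{MR2883396}) shows $(f,W)$ is generated by this curve; the H\"older-$\alpha(1-\beta)$ regularity is exactly the modulus bound. The $\IH$-case is identical with $(1-d)W(t)$ replaced by $W(t)+id$ and the radial geometry replaced by the half-plane geometry, using the $\IH$-version of \eqref{rLE}.

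I expect the main obstacle to be the careful choice of the parameter $d=d(|t-s|)$ and the propagation of the pointwise derivative bound from the single point $(1-d)W(t)$ to a full boundary arc and a radial segment of comparable size: the hypothesis is stated only at the tip preimage $(1-d)W(t)$, so one must invoke Koebe/growth-and-distortion theorems (or a Harnack-type argument for $\log|f_t'|$) to upgrade it to a neighbourhood of scale $d$, and verify that the implied constants are uniform in $t\in[0,T]$ — this uniformity, together with keeping the singularity $W_\tau$ a definite distance away throughout the time interval, is the delicate bookkeeping that makes the exponent come out to exactly $\alpha(1-\beta)$.
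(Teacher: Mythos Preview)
The paper does not actually prove this proposition: it is stated in Section~\ref{LEandSLE} as a background result (essentially a restatement of the regularity mechanism from \cite{MR2883396} and \cite{V}) and is used without proof. So there is no ``paper's own proof'' to compare against; I can only assess your argument on its merits.

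Your outline is the standard one and is correct. The decomposition into a radial piece, a boundary-arc piece, and a time-flow piece, controlled respectively by the derivative hypothesis, Koebe distortion plus the H\"older bound on $\theta$, and the reverse Loewner equation, is exactly how this is done in \cite{V} (see also Johansson~Viklund--Lawler for the $\IH$ version). Your self-correction from $d\asymp|t-s|^{1/2}$ to $d\asymp|t-s|^{\alpha}$ is the right move: with that choice the radial and arc terms both give $|t-s|^{\alpha(1-\beta)}$, while the time-integral term gives $|t-s|\,d^{-1-\beta}=|t-s|^{1-\alpha(1+\beta)}$, and the hypothesis $\alpha\le 1/2$ is precisely what makes $1-\alpha(1+\beta)\ge\alpha(1-\beta)$, so this term is harmless. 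One bookkeeping point worth tightening: you need the evaluation point to stay at distance $\gtrsim d$ from $W_\tau$ throughout $\tau\in[s,t]$, and with $d\asymp|t-s|^{\alpha}$ the oscillation of $W$ is only \emph{comparable} to $d$, not small compared to it; the fix is to take $d=C_0|t-s|^{\alpha}$ with $C_0$ a fixed multiple of the H\"older norm of $\theta$, so that $d-|W(t)-W_\tau|\ge d/2$. With that adjustment, and the Koebe distortion argument you describe to spread the pointwise derivative bound over a $d$-neighbourhood, the proof goes through.
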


A \textit{Schramm Loewner evolution}, SLE$_{\kappa}$, $\kappa > 0$ is the random process $(K_t, t\geq 0)$ with random driving function $W(t) = \sqrt{\kappa} B_t$ where $B:[0,\infty)\rightarrow \IR$ is a standard one-dimensional Brownian motion. It has been shown that the hull $K_t$ for every $t>0$ is almost surely
\begin{itemize}
 \item a simple curve for $0< \kappa\leq 4$
 \item generated by a curve for $4<\kappa <8$
 \item and a space filling curve for $\kappa\geq 8$
\end{itemize}
For $\kappa\neq 8$, this was proved in \cite{MR2883396}. For $\kappa=8$, it follows from \cite{LSW} since SLE$_8$ is a scaling limit of a random planar curve.

Levy's theorem tells us that Brownian motion is H\"older continuous of order strictly less than $1/2$. This is not enough to guarantee the existence of a curve via properties of Loewner equation. Combining with the properties of Brownian motion, Rohde and Schramm estimated the derivatives of the conformal maps to prove that SLE is generated by a curve with $\kappa\neq 8$, see \cite{MR2883396}. For $\kappa=8$, it follows from \cite{LSW} since SLE$_8$ is a scaling limit of a random planar curve.
\begin{theorem}[Rohde-Schramm, Lawler Schramm Werner] \label{Rhode-Schramm}
Let $(K_t)_{t>0}$ be an SLE$_\kappa$ for some $\kappa\in [0,\infty)$. Write $(g_t)_{t>0}$ and $(\xi_t)_{t>0}$ for the associated Loewner flow and transform. The map $g^{-1}(t):\IH\rightarrow H_t$ extends continuous to $\overline{\IH}$ for all $t>0$, almost surely. Moreover, if we set $\gamma_t=g_t^{-1}(\xi_t)$, then $(\gamma_t)_{t>0}$ is continuous and generated by $(K_t)_{t>0}$, almost surely.
\end{theorem}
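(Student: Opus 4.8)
\medskip
\noindent\emph{Proof strategy.}
The plan is to split according to whether $\kappa=8$. For $\kappa\in[0,\infty)\setminus\{8\}$ I would deduce the statement from the derivative criterion for Loewner pairs recorded above (in its $\IH$-version); the boundary case $\kappa=8$ is handled separately by invoking the convergence result of \cite{LSW}. So assume first $\kappa\neq 8$. By the $\IH$-analogue of the Proposition on $\ID$-Loewner pairs, it suffices to verify, almost surely, that: (i) the driving function $t\mapsto W_t=\sqrt{\kappa}\,B_t$ is H\"older-$\alpha$ on every $[0,T]$ for some $\alpha\le 1/2$; and (ii) there exist $c<\infty$, $d_0>0$ and $\beta=\beta(\kappa)\in[0,1)$ with $\sup_{t\in[0,T]} d\,|f_t'(W_t+id)|\le c\,d^{1-\beta}$ for all $d\le d_0$, where $f_t=g_t^{-1}$. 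Claim (i) is just L\'evy's modulus of continuity for Brownian motion (any $\alpha<1/2$ works). Granting (i)--(ii), the Proposition yields that $(f,W)$ is generated by a H\"older continuous curve $\gamma_t=\lim_{d\to0^+}f_t(W_t+id)$, which is exactly the assertion that $g_t^{-1}$ extends continuously to $\overline{\IH}$ with $\gamma_t=g_t^{-1}(\xi_t)$ continuous and generating $(K_t)$.

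The substance is therefore (ii), which I would obtain from a moment estimate on the derivative of the Loewner flow. Using the scaling and stationary increments of Brownian motion, one reduces to bounding $|f_t'(iy)|$ for the flow run over a time interval of fixed length, uniformly in small $y>0$; by reversibility this is tied to the forward flow, for which explicit diffusion identities are available. Writing $z_t=g_t(z)-W_t=x_t+iy_t$ for the shifted forward flow, one has $\partial_t y_t=-2y_t/|z_t|^2$, $\partial_t\log|g_t'(z)|=-2(x_t^2-y_t^2)/|z_t|^4$, and $dx_t=(2x_t/|z_t|^2)\,dt-dW_t$. Passing to the angular coordinate $\arg z_t\in(0,\pi)$ and performing a Girsanov change of measure turns the relevant process into a one-dimensional diffusion of Jacobi/Bessel type, under which $|f_t'|$ is explicitly integrable; reading off the Radon--Nikodym density and optimizing the free exponents yields a bound of the shape $\IE\big[\,|f_t'(iy)|^{\lambda}\,\big]\le C\,y^{-\mu}$ on a suitable range, with $\mu=\mu(\lambda,\kappa)$. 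The hypothesis $\kappa\neq 8$ enters precisely here: it is what permits a choice of $\lambda$ and of $\beta$ with $\beta<1$, whereas at $\kappa=8$ the estimate degenerates to $\beta=1$ and the argument breaks down.

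To pass from the moment bound to the uniform estimate (ii), I would apply Markov's inequality on a dyadic grid of pairs $(t,y)$, sum the resulting failure probabilities --- the exponents can be arranged so the series converges --- and invoke Borel--Cantelli to get the bound at grid points; one then interpolates to all $(t,d)$ in a compact range using the Loewner ODE to control $\partial_t f_t'$, Koebe-type distortion estimates for $f_t'$ in the $z$-variable, and the diameter bounds of Lemma~\ref{DiamKt}. Combined with the first paragraph, this proves the theorem for every $\kappa\neq 8$ (the case $\kappa=0$ being trivial, the curve then being a vertical segment). Finally, for $\kappa=8$ one appeals to \cite{LSW}: there SLE$_8$ is identified with the scaling limit, in the metric $d_{\cS}$ on $\cS$, of the uniform spanning tree Peano curve, a sequence of genuine lattice curves; hence the limiting hull is generated by a continuous (space-filling) curve, so again $g_t^{-1}$ extends continuously to $\overline{\IH}$ and $\gamma_t=g_t^{-1}(\xi_t)$ generates $(K_t)$.

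The main obstacle is the moment estimate for the flow derivative: choosing the correct auxiliary diffusion and change of measure, and tracking the exponents carefully enough to see both the bound $\IE[|f_t'(iy)|^\lambda]\le C y^{-\mu}$ and the precise way in which $\kappa\neq 8$ forces $\beta<1$. The subsequent chaining and distortion-based interpolation are technical but essentially routine, and the $\kappa=8$ case is entirely offloaded to \cite{LSW}.
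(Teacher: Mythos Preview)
The paper does not give its own proof of this theorem: it is stated as background and attributed directly to \cite{MR2883396} for $\kappa\neq 8$ and to \cite{LSW} for $\kappa=8$, with only the one-sentence remark that Rohde and Schramm ``estimated the derivatives of the conformal maps'' combined with Brownian regularity. Your proposal is precisely an outline of the Rohde--Schramm argument (moment bounds on $|f_t'|$ via a change of measure, Borel--Cantelli on a dyadic grid, distortion/interpolation to upgrade to a uniform derivative bound, then the H\"older-driver-plus-derivative criterion), together with the \cite{LSW} appeal for $\kappa=8$; so your approach matches what the paper cites rather than anything the paper itself supplies.
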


Due to conformal invariance, it is enough for us to define the process in reference domains the upper half-plane $\IH$ or the unit disk $\ID$. The chordal version of SLE is defined in $\IH$ and is a family of random curves connecting two boundary points $0$ and $\infty$.  With probability $1$, this path is transient, i.e. tends to $\infty$ as $t\rightarrow \infty$.  We can define \textit{chordal} SLE between any two fixed boundary points $a,b$ in any simply connected domain $\Omega$ via a Riemann map with the law defined by the pushforward by a conformal map $\varphi:\IH\rightarrow \Omega$ with $\varphi(0)=a$ and $\varphi(\infty)=b$. This map is unique up to scaling which only affects the time parameterization of the curve. Similarly, \textit{radial} SLE defines a conformally invariant family of random curves connecting a boundary point with an interior point.

Recall that we are able to construct SLE$_\kappa$ as $K_t = \{z\in\IH : \tau(z) \leq t \}$ where $\tau(z)$ is the lifetime of the maximal solution to Loewner's equation \eqref{LDE} starting from $z$ where $(W_t)_{t\geq 0}$ is a Brownian motion of diffusivity $\kappa$.  A necessary and sufficient condition for a family of increasing compact $\IH$-hulls $K_t$ to be described by Loewner equation with a continuous driving function is that if it satisfies the local growth property: $\mathrm{rad}(K_{t,t+h})\rightarrow 0$ as $h\downarrow 0$ uniformly on compact sets where $K_{s,t} = g_{K_s}(K_t\backslash K_s)$ for $s<t$, see Proposition \eqref{hullsLE}. Let $\mathcal{L}$ be the set of increasing families of compact $\IH-$hulls $(K_t)_{t\geq 0}$ having the local growth property and such that $\mathrm{hcap}(K_t) = 2t$ for all $t$. Then, on this set $\mathcal{L}$, we have a natural definition of scaling: For $\lambda\in(0,\infty)$ and $(K_t)_{t\geq 0}$, define $K^\lambda_t = \lambda K_{\lambda^{-2}t}$. Recall that $\mathrm{hcap}(\lambda K_t) = \lambda^2 \mathrm{hcap}(K_t).$ So, we have rescaled time so that $(K^\lambda_t)_{t\geq 0}\in\mathcal{L}$.

Schramm's revolutionary observation that these processes were the unique possible scaling limits for a range of lattice based planar random systems at criticality, such as loop-erased random walk, percolation, Ising model, and self-avoiding walk. These limits had been conjectured but Schramm offered the candidate for the limit object. It turns out that conformal invariance and domain Markov property are enough to classify the random curves.

 Given a collection of probability measures $(\IP_{(D,a,b)})$ where $\IP$ is the law of a random curve $\gamma:[0,\infty)\rightarrow\IC$ such that $\gamma([0,\infty))\subset \overline{D}$ and $\gamma(0)=a, \; \gamma(\infty)=b$. The family $(\IP_{(D,a,b)})$ satisfies \textit{conformal invariance} if for all $(D,a,b)$ and conformal maps $\phi$
\[
 \IP_{(D,a,b)}\circ\phi^{-1} = \IP_{(\phi(D),\phi(a),\phi(b))}.
\]
Let $(\cF_t)_{t\geq 0}$ be the filtration generated by $(\gamma(t))_{t\geq 0}$. The family $(\IP_{(D,a,b)})$ satisfies the \textit{domain Markov property} if: for all $(D,a,b)$ for every $t\geq 0$ and for any measurable set $B$ in the space of curves
\[
 \IP_{(D,a,b)}(\gamma[t,\infty)\in B | \cF_t) = \IP_{(D\backslash\gamma[0,t],\gamma(t),b)}(\gamma\in B).
\]

\noindent\textbf{Schramm's Principle.} \textit{ Schramm-Loewner Evolutions are the only random curves satisfying conformal invariance and domain Markov property.}

We will need the following derivative estimate for both chordal and radial SLE. See Appendix A in \cite{V} for a detailed analysis and proof of these bounds.
Suppose $\kappa>0$, let
\begin{align*}
 \lambda_C & = 1 + \frac{2}{\kappa} + \frac{3\kappa}{32}, \\
 q(\beta) & = \min\left\lbrace \lambda_C \beta, \beta + \frac{2(1+\beta)}{\kappa} + \frac{\beta^2\kappa}{8(1+\beta)} - 2 \right\rbrace, \\
 \mathrm{ and } \; \beta_+ & = \max \left\lbrace 0 , \frac{4(\kappa\sqrt{8+\kappa} - (4 - \kappa) )}{(4+\kappa)^2} \right\rbrace.
\end{align*}

\begin{proposition}\label{derest1}
 Let $T<\infty$ be fixed and let $(f_t)$ be the reverse chordal SLE$_\kappa$ Loewner chain, $\kappa\in (0,8)$. Let $\beta\in(\beta_+, 1)$ and $q<q(\beta)$. There exists a constant $0<c<\infty$ depending only on $T,\kappa,q $ such that for every $y_* < 1$
 \[
  \IP\left\lbrace \forall y\leq y_*,~ \sup_{t\in[0,T]} y~\left|f'(t,W(t)+iy)\right| \leq cy^{1-\beta} \right\rbrace \geq 1-cy_*^q.
 \]

\end{proposition}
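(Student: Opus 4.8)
The plan is to derive the uniform bound from a single-point, single-time tail estimate for the derivative of the reverse flow, and to pass to the uniform statement by a union bound over a geometric net in the height $y$ and a polynomially fine net in the time $t$, with interpolation controlled by the Koebe distortion theorem. Concretely, fix dyadic heights $y_k=2^{-k}y_*$ ($k\ge 0$) and, at level $k$, times $t_j=j\delta_k$ with $\delta_k=y_k^{2+\varepsilon}$ for a small fixed $\varepsilon>0$, so there are $O(T y_k^{-2-\varepsilon})$ sampled times. For $y\in[y_{k+1},y_k]$ the points $W(t)+iy$ and $W(t)+iy_k$ lie in a common Koebe neighbourhood in $\IH$, whence $|f'(t,W(t)+iy)|\asymp|f'(t,W(t)+iy_k)|$ up to a universal constant; since $|z-W(s)|\ge\Im z=y_k$ for $z=W(t_j)+iy_k$, differentiating \eqref{rLE} and using $|f_s''/f_s'|(z)\lesssim\Im(z)^{-1}$ (again Koebe) gives $|\log f'(t,z)-\log f'(t_j,z)|\lesssim\delta_k/y_k^{2}\lesssim 1$ for $t\in[t_j,t_{j+1}]$; and $|W(t)-W(t_j)|\le y_k/2$ on that interval off an event of probability $\le 2e^{-cy_k^{-\varepsilon}}$, which is summable over all $k,j$. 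So it suffices to bound $\IP(|f'(t_j,W(t_j)+iy_k)|>c'y_k^{-\beta})$ for each $k,j$, and then sum over $j$ and over $k$.

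For the single-point, single-time estimate, fix $t$ and $y$. By reversibility of the driving Brownian motion ($s\mapsto W(t)-W(t-s)$ is again a Brownian motion of diffusivity $\kappa$), the law of $y\,|f'(t,W(t)+iy)|$ coincides with that of $\Im(g_t(iy))\,|g_t'(iy)|$, where $(g_s)$ is the forward chordal SLE$_\kappa$ flow started from $iy$. Writing $Z_s=g_s(iy)-W(s)=X_s+iY_s$, It\^o's formula yields the autonomous system
\[
dX_s=\frac{2X_s}{X_s^2+Y_s^2}\,ds-\sqrt{\kappa}\,dB_s,\qquad dY_s=-\frac{2Y_s}{X_s^2+Y_s^2}\,ds,\qquad d\log|g_s'(iy)|=\frac{2(Y_s^2-X_s^2)}{(X_s^2+Y_s^2)^2}\,ds .
\]
Introduce, for a real parameter $\zeta$, the process $M_s=|g_s'(iy)|^{a(\zeta)}(Y_s/y)^{b(\zeta)}F_\zeta(X_s/Y_s)$, where $a(\zeta),b(\zeta)$ are the (quadratic in $\zeta$) exponents and $F_\zeta$ is the bounded, bounded-below solution of the linear second-order ODE that makes the drift of $M_s$ vanish — the classical SLE one-point function. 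Optional stopping, together with $Y_s\le y$, then produces moment bounds $\IE[\,|f'(t,W(t)+iy)|^{p}\,]\le C_p\,y^{-\xi(p)}$, valid for $p$ in a range $(0,p_c(\kappa))$ governed by positivity of $F_\zeta$, with $\xi$ an explicit convex function, $\xi(0)=0$. This step is a quantitative form of the Rohde--Schramm derivative estimates \cite{MR2883396}, carried out in Appendix A of \cite{V}.

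Markov's inequality then gives $\IP(|f'(t,W(t)+iy)|>c'y^{-\beta})\le C_p(c')^{-p}\,y^{\beta p-\xi(p)}$; combining this with a crude universal bound encoding the a.s.\ growth rate $\limsup_{y\to 0}\log|f'(t,W(t)+iy)|/\log(1/y)=\lambda_C$ and optimizing over $p$ (equivalently over $\zeta$) shows that for $c'$ large enough and every $q<q(\beta)$ one has $\IP(|f'(t,W(t)+iy)|>c'y^{-\beta})\le C\,y^{q+2}$, the two branches of $q(\beta)$ corresponding to the optimum being attained in the interior of $(0,p_c)$ or at the endpoint $p_c$. Inserting this into the reduction: at level $k$ the sum over the $O(Ty_k^{-2-\varepsilon})$ sampled times costs a factor absorbed by the exponent $q+2$ (this is the role of the "$-2$" in the definition of $q(\beta)$), and the remaining geometric series $\sum_{k\ge 0}(2^{-k}y_*)^{q}\lesssim y_*^{q}$ has constant independent of $y_*$ — the asserted form. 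The hypothesis $\beta\in(\beta_+,1)$ enters precisely because $q(\beta)>0$ if and only if $\beta>\beta_+$.

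I expect the main obstacle to be the one-point estimate with the \emph{sharp} rate function $\xi(p)$: identifying the correct local martingale $M_s$, solving and analysing the ODE for $F_\zeta$, and pinning down the range $p<p_c$ of finiteness of the moments — this is where $\lambda_C$, $q(\beta)$ and $\beta_+$ are actually computed. The second genuinely technical point is uniformity in $t\in[0,T]$: since the evaluation point $W(t)+iy$ moves, passing from the sampled times $t_j$ to all $t$ needs both the Koebe interpolation above and a control on the Brownian increments compatible with the union bound, which forces the mesh $\delta_k$ to be polynomially finer than $y_k^2$ and thereby produces the polynomial loss that the "$-2$" absorbs.
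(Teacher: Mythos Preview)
The paper does not prove this proposition at all: it is stated as a quoted result with the sentence ``See Appendix A in \cite{V} for a detailed analysis and proof of these bounds'' immediately preceding it, and no argument is given in the body of the paper. So there is nothing to compare your proposal against in this paper.

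That said, your sketch is essentially the argument of Appendix~A in \cite{V}, which is what the paper is citing: a one-point moment bound for $|f_t'(W(t)+iy)|$ obtained from the Rohde--Schramm local martingale (after time-reversal of the driving Brownian motion), followed by Chebyshev, then a union bound over a dyadic net in $y$ and a $y_k^{2+\varepsilon}$-mesh in $t$, with Koebe distortion and Brownian continuity handling the interpolation. The identification of $q(\beta)$ as the minimum of the ``interior optimum'' branch and the ``endpoint $p_c$'' branch, and the appearance of the $-2$ to absorb the number of time samples, are exactly the mechanisms in \cite{V}. One small caution: your sentence equating the law of $y|f_t'(W(t)+iy)|$ with that of $\Im(g_t(iy))|g_t'(iy)|$ is morally right but not literally the identity used --- the precise statement is that for fixed $t$ the map $z\mapsto f_t(z+W(t))$ has the same distribution as the time-$t$ map of a \emph{centred reverse} Loewner flow driven by a fresh Brownian motion, so the derivative at $iy$ has the law you want; the forward flow $g_t$ started from $iy$ does not directly give $f_t'$ without this reversal step. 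This is a cosmetic point and does not affect the validity of your outline.
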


Let $(f_s,W_s)$ be a radial Loewner pair generated by the curve $\gamma(s)$ with $W$ continuous. Recall that $f_s:\ID\rightarrow D_s$ satisfies the reverse radial Loewner equation. Let $g_s = f_s^{-1}$ and $z_s=g_s(-1)\overline{W}_s$. Fix $\epsilon>0$ and $T<\infty$ and define $\sigma = \inf\{s\geq 0: \; |1 - z_s| \leq \epsilon \} \wedge T$. This is a measure of the ``disconnection time'' $\sigma'$ when $K_s$ first disconnects $-1$ from $0$ in $\ID$, i.e. the first time $z_s$ hits $1$. Clearly, $\sigma < \sigma'$.

\begin{proposition}\label{A.5}
 Let $\kappa\in(0,8)$. Let $\epsilon>0$ be fixed and let $(f_s),\; 0\leq s\leq\sigma$, be the radial $SLE_\kappa$ Loewner chain stopped at $\sigma$. For every $\beta\in(\beta_+,1)$ and $q < q(\beta)$, there exists a constant $c=c(\beta,\kappa,q,\epsilon, T)<\infty$ such that for $d_*<1$
 \[
  \IP\left\lbrace \forall d\leq d_*,~ \sup_{t\in[0,\sigma]} d~\left|f'(t,(1-d)W(t))\right| \leq cd^{1-\beta}\right\rbrace \geq 1 - cd_*^q.
 \]
 \end{proposition}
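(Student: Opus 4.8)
The exponents $\beta_+$, $q(\beta)$, $\lambda_C$ here coincide with those in Proposition \ref{derest1}, which points to the strategy I would follow: derive the radial bound from the already-available chordal bound, the passage between them costing only a change of multiplicative constants. Two ingredients are needed. The first is a conformal change of coordinates: I would fix the Möbius map $\varphi:\ID\to\IH$ with $\varphi(W(0))=0$ and $\varphi(-1)=\infty$, which conjugates the radial Loewner flow in $\ID$ driven by $W$ (and tracking the marked boundary point $-1$) to a chordal Loewner flow in $\IH$ after reparametrization by half-plane capacity; under $\varphi$ the point $(1-d)W(t)$, lying at Euclidean distance $\asymp d$ from the current radial driving point, is carried to a point $\tilde W(\tilde t)+i\tilde y$ with $\tilde y\asymp d$, and the chain rule relates $|f'(t,(1-d)W(t))|$ to $|\tilde f'(\tilde t,\tilde W(\tilde t)+i\tilde y)|$ up to the conformal factors of $\varphi$ and $\varphi^{-1}$. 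The second is the standard radial–chordal absolute continuity: the law of radial SLE$_\kappa$ from $W(0)$ to $0$ and that of chordal SLE$_\kappa$ from $W(0)$ to $-1$ are mutually absolutely continuous on $\cF_\tau$ for any stopping time $\tau$ strictly before the curve disconnects $-1$ from $0$, the Radon–Nikodym derivative being a function of the flow that is bounded on the event $\{\,|1-z_s|\ge\epsilon,\ s\le\tau\,\}$ by a constant $M=M(\epsilon,\kappa,T)$ (at $\kappa=6$ the two laws even coincide).

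I would then combine these as follows. On $[0,\sigma]$ one has $|1-z_s|\ge\epsilon$ by definition of $\sigma$, so the radial flow stays quantitatively away from disconnecting $-1$, hence away from $\varphi^{-1}(\infty)=-1$; Koebe/distortion estimates then show that the conformal factors of $\varphi,\varphi^{-1}$ appearing above, the matching $d\leftrightarrow\tilde y$, and the time change $t\leftrightarrow\tilde t$ are all pinched between two constants depending only on $\epsilon,\kappa,T$, uniformly over $t\in[0,\sigma]$ and $d\le d_*$, and that $\sigma$ is carried to a chordal stopping time bounded above by a deterministic $T'=T'(\epsilon,\kappa,T)$. Choosing $c$ suitably, the complement of the event in the statement would then be contained, under the chordal law, in an event of the form $\{\exists\,\tilde y\le c'd_*:\ \sup_{\tilde t\in[0,T']}\tilde y\,|\tilde f'(\tilde t,\tilde W(\tilde t)+i\tilde y)|>c''\,\tilde y^{\,1-\beta}\}$, whose probability is at most $c'''d_*^{\,q}$ by Proposition \ref{derest1}. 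Since the event in the statement is $\cF_\sigma$-measurable (it involves only the maps $f_t$ for $t\le\sigma$, hence only the hulls up to $\sigma$), absolute continuity would upgrade this to a radial probability $\le M c'''d_*^{\,q}$ for the complement, and absorbing constants into one $c=c(\beta,\kappa,q,\epsilon,T)$ finishes the proof.

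The hard part will be the uniform control of the conformal distortion in the change of coordinates: one must show that the factors from $\varphi$ and $\varphi^{-1}$, the height-versus-radius matching $\tilde y\asymp d$, and the reparametrization stay bounded above and below \emph{simultaneously for all $t\le\sigma$ and all $d\le d_*$}, and this is precisely where stopping at $\sigma$ — the quantitative $\epsilon$-margin before disconnection — is indispensable rather than the mere inequality $\sigma<\sigma'$. A secondary technical point is that both propositions concern \emph{reverse} Loewner flows, so I would need to check that $\varphi$-conjugation intertwines the reverse radial flow with the reverse chordal flow driven by the transported, time-changed driving function; this is exactly where the Girsanov comparison of the driving terms enters, and where one must verify that its density is genuinely bounded (not merely finite) on $\{|1-z_s|\ge\epsilon\}$.
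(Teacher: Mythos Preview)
The paper does not give its own proof of this proposition: immediately before Proposition~\ref{derest1} it says ``See Appendix~A in \cite{V} for a detailed analysis and proof of these bounds,'' and both the chordal and radial derivative estimates are simply quoted from there. So there is no in-paper argument to compare against.

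That said, your outline is a sound and standard route to the result, and is in the spirit of how the radial estimate is obtained in \cite{V}. The two ingredients you isolate --- the M\"obius conjugation $\varphi:\ID\to\IH$ sending $-1\mapsto\infty$ together with the induced time change, and the bounded Radon--Nikodym derivative between radial SLE$_\kappa$ (target $0$) and chordal SLE$_\kappa$ in $\ID$ (target $-1$) on $\{|1-z_s|\ge\epsilon\}$ --- are exactly what is needed, and your identification of the genuinely delicate step (uniform two-sided control of the conformal factors, the $d\leftrightarrow\tilde y$ matching, and the time change, \emph{simultaneously} over $t\le\sigma$ and $d\le d_*$) is accurate. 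The stopping at $\sigma$ is precisely what makes these distortion factors uniformly bounded in terms of $\epsilon$ and $T$, and what keeps the Girsanov density bounded rather than merely finite. One small clarification: the conjugation by $\varphi$ does not directly intertwine the radial reverse flow with the chordal reverse flow; rather, it intertwines the \emph{chordal} flow in $\ID$ targeting $-1$ with the chordal flow in $\IH$ targeting $\infty$, and it is the absolute-continuity step that bridges radial-to-$0$ and chordal-to-$(-1)$ in $\ID$. Your write-up already has this logic in the right order, but the phrase ``$\varphi$-conjugation intertwines the reverse radial flow with the reverse chordal flow'' in your last paragraph slightly blurs the two steps; keeping them separate will make the distortion bookkeeping cleaner.
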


 \subsection{Approaches to Proving Convergence to SLE}

The two main schemes used for proving convergence are as follows:
\begin{enumerate}
\item[\textbf{Scheme 1.}]
 \begin{enumerate}
  \item Establish precompactness for the sequence of probability measures describing the discrete curves which provides the existence of subsequential limits.
  \item Choose a converging sub-sequence and check that we can describe the limiting curve by Loewner evolution.
  \item Show that the convergence is equivalent to the uniqueness of the subsequential limit.
 \end{enumerate}
\end{enumerate}
 An example of this approach can be found in \cite{MR2227824}. The main part of this scheme is proving uniqueness which involves finding an observable with a well-behaved scaling limit. The observable needs to be a martingale with respect to the information generated by the growing curve and so must be closely related to the interface. In general, one finds an observable by solving a discrete boundary value problem defined on the same or related graph as the interface typically resulting in a preharmonic function.
 Kempannien and Smirnov's main result shows that for $\kappa <8$ a certain, uniform bound on the probability of an annulus crossing event implies the existence of subsequential limits (part(a) of scheme 1) and that they can be described by Loewner equation with random driving forces (part (b) of scheme 1). This is referred to as the \textit{precompactness part} of the scheme. One needs to show that the collection of measures is precompact (in weak-* topology) in the space of continuous curves that are Loewner allowed slits. Since the curves on lattice are simple, they do not have transversal self-intersections. So, in order for a curve to be described by Loewner equation, the tip of the curve must remain visible at all times and move continuously when viewed from infinity. This rules out two scenarios: the curve cannot close a loop and then travel inside before exiting or travel for long runs along the boundary of the domain or along the earlier part of the curve, see Figure \eqref{badeventsLE}. Both of these events can be reduced to probabilities of annulus crossing. The necessary framework for precompactness in the space of curves was suggested by Aizenmann and Burchard \cite{AB} and generalized by Kemppanien and Smirnov \cite{KS}. It turns out that appropriate bounds on probability of certain crossings of annulus imply tightness: a curve has a H\"older parameterization with stochastically bounded norm.

  Recall the general set up: Let $D^\delta_\IC$ be the polygonal domain (union of tiles) corresponding to $D^\delta$ and $\phi^\delta:(D^\delta_\IC; a^\delta, b^\delta)\rightarrow (\ID; -1,1)$ be some conformal map. This map is not normalized in any specific way yet. We equip the space of continuous oriented curves by the following metric:
 \[
 d(\gamma_1, \gamma_2) = \inf || \gamma_1\circ\phi_1 - \gamma_2\circ \phi_2||_\infty,
 \]
 where the infimum is taken over all orientation-preserving reparameterizations of $\gamma_1$ and $\gamma_2$.

 \noindent\textsc{Crossing Bounds}.
The following definitions will be useful throughout this paper.
\begin{definition}A curve $\gamma^\delta$ \textit{crosses} an annulus $A(z_0,r,R) = B(z_0,R)\backslash \overline{B(z_0,r)}$ if it intersects both its inner and outer boundaries $\partial B(z_0,r)$ and $\partial B(z_0,R)$. An \textit{unforced crossing} is a crossing that can be avoided by deforming the curve inside $D^\delta$. That is, the crossing occurs along a sub-arc of $\gamma^\delta$ contained in a connected component of $A(z_0,r,R)\cap D^\delta$ that does not disconnect $a^\delta$ and $b^\delta$.
\end{definition}

\noindent\textbf{KS Condition.} The curves $\gamma^\delta$ satisfy a \textit{geometric bound on unforced crossings}, if there exists $C>1$ such that for any $\delta>0$, for any stopping time $0\leq \tau\leq 1$ and for any annulus $A(z_0,r,R)$ where $0<Cr<R$
\[
 \IP^\delta \left(\gamma^\delta[\tau,1] \; \text{makes an unforced crossing of} \; A(z_0,r,R) \; | \; \gamma^\delta[0,\tau] \right) < 1/2.
\]

\begin{remark}
 As the interfaces $\gamma^\delta$ we are interested in satisfy the domain Markov property, it is sufficient to check the time zero condition for all domains $D^\delta$ simultaneously.
\end{remark}

It is shown in \cite{KS} that this condition is equivalent to a conformal bound on an unforced crossing and hence it is conformally invariant, see Proposition 2.6 in \cite{KS}. Thus, if the conditions hold for the curves $\gamma^\delta$ then they hold for $\gamma_{\ID}^\delta$ too.

\noindent\textsc{Equivalent Conditions}.
\noindent\textbf{Condition G3}. The curves $\gamma^\delta$ satisfy a \textit{geometric power-law bound on any unforced crossings} if there exists $K>0$ and $\Delta>0$ such that for any $\delta>0$, for any stopping time $0\leq \tau \leq 1$ for any annulus $A(z_0,r,R)$ where $0<r\leq R$
\[
 \IP^\delta \left(\gamma^\delta[\tau,1] \; \text{makes an unforced crossing of} \; A(z_0,r,R) \; | \; \gamma^\delta[0,\tau] \right) < K \left(\frac{r}{R} \right)^\Delta.
\]
Instead of an annuli, one can consider all conformal rectangles $Q$, i.e., conformal images of rectangles $\{ z: \Re{z} \in (0,l), \Im{z} \in (0,1) \}$. For fixed $Q$, define the marked sides as the images of the segments $[0,i]$ and $[l,l+i]$ and the other sides as unmarked. Let the uniquely defined quantity $l(Q)$ be the extremal length of $Q$. Say that $\gamma^\delta$ crosses $Q$ if $\gamma^\delta$ intersects both of its marked sides.
\noindent\textbf{Condition C2}. The curves $\gamma^\delta$ satisfy a \textit{conformal bound on any unforced crossings} if there exists $L>0$ such that for any $\delta>0$, for any stopping time $0\leq \tau \leq 1$ for any conformal rectangle $Q\subset D^\delta$ that does not disconnect $a^\delta$ and $b^\delta$. If $l(Q)>L$ and unmarked sides of $Q$ lie on $\partial D^\delta$ then
\[
 \IP^\delta \left(\gamma^\delta[\tau,1] \; \text{makes a crossing of} \; Q \; | \; \gamma^\delta[0,\tau] \right) < K\exp{(-\epsilon l(Q))}.
\]

\begin{remark}
 Since all the conditions are equivalent, the constant $1/2$ above and in the KS Condition can be replaced by any other from $(0,1)$.
\end{remark}

For a discussion on these conditions, see \cite{KS}. The KS Condition (or one of the equivalent conditions) has been shown to be satisfied for the following models: FK-Ising model, spin Ising model, percolation, harmonic explorer, chordal loop-erased random walk (as well as radial loop-erased random walk), and random cluster representation of $q$-Potts model for $1\leq q\leq 4$. The KS Condition fails for the uniform spanning tree, see \cite{KS}.

The following theorem details the sufficient condition and framework for precompactness in the space of curves.

 \begin{theorem}[See \cite{KS}] Let $D$ be a bounded simply connected domain with two distinct accessible prime ends $a$ and $b$. If the family of probability measures $\{ \gamma^\delta \}$ satisfies the KS Condition, given below, then both $\{\gamma^\delta \}$ and $\{\gamma_{\ID}^\delta\}$, the family of probability measures in $\mathcal{S}_{\mathrm{simple}}(\ID)$ connecting $-1$ and $1$ defined by the push-forward by $\phi$, are tight in the topology associated with the curve distance. Moreover, if $\gamma_{\ID}^\delta$ is converging weakly to some random curve $\gamma_{\ID}$ then the following statements hold:
 \begin{enumerate}
  \item Almost surely, the curve $\gamma_{\ID}$ can be fully described by the Loewner evolution and the corresponding maps $g_t$ satisfying the Loewner equation with driving process $W_t$ which is $\alpha$-H\"older continuous for any $\alpha < 1/2$.
  \item The driving process $W_t^\delta$ corresponding to $\gamma_{\ID}^\delta$ convergences in law to $W_t$ with respect to uniform norm on finite intervals; moreover, $\sup_{\delta>0} \IE[\exp(\epsilon |W_t^\delta|/\sqrt{t})]<\infty$ for some $\epsilon >0$ and all $t$.
 \end{enumerate}

\end{theorem}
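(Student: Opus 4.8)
The plan is to follow the two-step scheme described above: first establish tightness of the families $\{\gamma^\delta\}$ and $\{\gamma_{\ID}^\delta\}$ in the curve metric $d_{\cS}$, producing subsequential limits, and then show that any such limit is almost surely a Loewner curve whose driving function has the stated regularity and is the limit in law of the discrete driving functions. Since the KS Condition is conformally invariant --- it is equivalent to the conformal crossing bound C2 --- it passes from $\gamma^\delta$ to the pushed-forward curves $\gamma_{\ID}^\delta$, so most of the work can be done in the reference domain $\ID$; at the outset I would also record the equivalence of the KS Condition with the power-law bounds G3 and C2, since the power-law form is what makes the chaining arguments quantitative.

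\emph{Tightness.} Following the Aizenman--Burchard machinery (\cite{AB}, \cite{KS}), I would use the power-law unforced-crossing bound G3 together with a multiscale union bound to show that, for every $\eta>0$, with probability at least $1-\eta$ the curve $\gamma_{\ID}^\delta$ admits a parameterization whose modulus of continuity is dominated by a fixed deterministic gauge, uniformly in $\delta$. The mechanism is that a curve increment which is geometrically large but ``thin'' forces an unforced crossing of an annulus of large modulus, an event of small probability by G3; summing over a dyadic family of scales and centres yields the equicontinuity, and tightness in $(\cS,d_{\cS})$ follows from Arzel\`a--Ascoli. Running the same argument in the original domain (using that the uniformizing maps $\phi^\delta$ converge in the Carath\'eodory sense) gives tightness of $\{\gamma^\delta\}$ as well.

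\emph{Identification of the limit.} Fix a weakly convergent subsequence $\gamma_{\ID}^{\delta_n}\to\gamma_{\ID}$ and pass to a coupling so that the convergence holds almost surely in $d_{\cS}$. I must verify the hypotheses of Proposition \eqref{hullsLE}: that the half-plane capacity of the hulls is strictly increasing and continuous, so that the capacity parameterization is non-degenerate, and that the local-growth separation property holds, i.e.\ for every $\delta>0$ there is $\epsilon>0$ such that whenever $|t-s|<\delta$ a connected set $C\subset H_s$ with $\diam C<\epsilon$ separates $K_t\setminus K_s$ from infinity. Both follow from the crossing bound passed to the limit: the absence of unforced crossings of thin annuli rules out precisely the pathologies of Figure \eqref{badeventsLE} --- the tip disappearing into a fjord that pinches off, and long runs along the boundary or the earlier trace --- which are the obstructions (a)--(d) to the capacity parameterization; the remaining ones do not occur because $\ID$ is bounded and $\gamma_{\ID}$ terminates at $\pm1$. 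Hence $\gamma_{\ID}$ is a Loewner curve with a continuous driving function $W$.

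\emph{Regularity and convergence of the driving function, and the main obstacle.} For $\alpha$-H\"older continuity of $W$ and of $W^\delta$ with stochastically bounded norm (any $\alpha<1/2$), I would apply Lemma \eqref{DiamKt} to the shifted Loewner chains: the oscillation $\sup_{u\in[s,s+h]}|W_u-W_s|$ is comparable, up to the additive $\sqrt h$, to $\diam(K_{s,s+h})$, which is controlled with high probability by the modulus of continuity from the tightness step, after converting the Euclidean modulus into a capacity modulus using monotonicity of half-plane capacity in diameter. The exponential moment $\sup_{\delta}\IE[\exp(\epsilon|W_t^\delta|/\sqrt t)]<\infty$ comes from iterating the KS Condition --- crossing $k$ nested annuli of ratio $C$ costs probability $<2^{-k}$ --- which gives an exponential tail for $\diam(K_t^\delta)$, hence for $|W_t^\delta|$ by the upper bound in Lemma \eqref{DiamKt}. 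For the convergence $W^{\delta_n}\to W$, I would use that the assignment of a driving function to a Loewner-allowed curve with a prescribed modulus of continuity is continuous: Carath\'eodory convergence of the complements $H_t^{\delta_n}\to H_t$, locally uniformly in $t$, forces locally uniform convergence $g_t^{\delta_n}\to g_t$ and hence $W^{\delta_n}\to W$, so on the coupling the convergence is uniform on compact time intervals almost surely, which upgrades to convergence in law. I expect the crux to be the identification step combined with the Euclidean-to-capacity conversion near the marked points $\pm1$: a limit of simple lattice curves can a priori develop exactly the degeneracies (a)--(d), and ruling them out while simultaneously turning the geometric modulus of continuity into genuine control of $W$ near $\pm1$ is where the full quantitative strength of the crossing estimates --- not a soft compactness argument --- is required.
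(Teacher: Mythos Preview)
The paper does not give its own proof of this theorem: it is stated with the attribution ``[See \cite{KS}]'' and the subsequent remark explicitly says that it ``combines several of the main results from \cite{KS}.'' There is therefore nothing in the present paper to compare your proposal against.

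That said, your outline is a faithful sketch of the Kemppainen--Smirnov argument: the Aizenman--Burchard tortuosity bounds (Hypothesis~H1, derived from the KS Condition via Condition~G3) give tightness and a uniform modulus of continuity; the equivalent conformal bound C2 rules out the six-arm and boundary-run pathologies so that the limit satisfies the local-growth criterion of Proposition~\ref{hullsLE}; and the diameter/capacity comparison of Lemma~\ref{DiamKt} converts the geometric modulus into H\"older control on $W$ and yields the exponential tail. One point to be careful with is your ``Euclidean-to-capacity conversion near $\pm1$'': in \cite{KS} this is handled not by a direct modulus estimate at the marked prime ends but by working in $\IH$ after the M\"obius map $\Phi$ and using that the crossing bounds are conformally invariant, so the endpoint issue becomes a question about behaviour at $0$ and $\infty$ where the half-plane capacity scaling is clean. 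Your identification of this as the crux is correct, but the resolution is a change of chart rather than an additional estimate.
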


\begin{remark}
 This theorem combines several of the main results from \cite{KS}. Note that if the prime ends $a,b$ are accessible, as assumed, then the convergence of $\gamma^\delta$ outside of their neighbourhood implies the convergence of the whole curves.
\end{remark}

\begin{enumerate}
 \item[\textbf{Scheme 2.}]
 \begin{enumerate}
  \item Describe the discrete curve by Loewner evolution. (A discrete curve can always be described by Loewner evolution).
  \item Use the observable to show that the driving force is approximately $\sqrt{\kappa} B_t$ for the appropriate $\kappa$.
  \item Show convergence of the driving process in Loewner characterization.
  \item Improve to convergence of curves.
 \end{enumerate}
 \end{enumerate}

  An example of this scheme being implemented can be found in \cite{LSW}. The main part of this scheme involves showing the convergence of driving processes in Loewner characterization. Convergence of driving processes is not sufficient to obtain path-wise convergence. One needs what is known as \textit{a priori estimates of interface regularity}. Kempannien and Smirnov in \cite{KS} show that these \textit{a priori bounds} can be derived from a certain, uniform bound on the probability of an annulus crossing event, the KS condition.  The following corollary formulates the relationship between convergence of random curves and the convergence of their driving processes.
  In particular, if the driving processes of Loewner chains satisfying this uniform bound on the probability of a certain annulus crossing event converge, then the limiting Loewner chain is also generated by a curve (part (d) of scheme 2). In this corollary, it is assumed that $\IH$ is endowed with a bounded metric. Otherwise, map $\IH$ onto a bounded domain.

 \begin{corollary}[Corollary 1.7 \cite{KS}] Suppose that $(W^\delta)$ is a sequence of driving processes of random Loewner chains that are generated by simple random curves $(\gamma^{\delta})$ in $\IH$, satisfying the KS Condition.  Suppose also that $(\gamma^{\delta})$ are parameterized by capacity. Then
\begin{itemize}
 \item  $(W^{\delta})$ is tight in the metrizable space of continuous functions on $[0,\infty)$ with the topology of uniform convergence on the compact subsets of $[0,\infty)$.
 \item $(\gamma^{\delta})$ is tight in the space of curves $\cS$.
 \item $(\gamma^{\delta})$  is tight in the metrizable space of continuous functions on $[0,\infty)$ with the topology of uniform convergence on the compact subsets of $[0,\infty)$.
\end{itemize}

Moreover, if the sequence converges in any of the topologies above it also converges in the two other topologies and the limits agree in the sense that the limiting random curve is driven by the limiting driving process.
\end{corollary}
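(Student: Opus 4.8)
The plan is to obtain everything from the precompactness machinery of \cite{AB,KS} together with the curve$\leftrightarrow$driving-function dictionary supplied by Lemma \eqref{DiamKt} and the separation criterion of Proposition \eqref{hullsLE}. First I would fix the bounded metric on $\IH$ pulled back from $\ID$ by $z\mapsto(z-i)/(z+i)$, so that all three topologies in the statement are defined. Since the KS Condition is conformally invariant, the curves $\gamma^\delta$ satisfy it in every conformal picture, so the precompactness theorem recalled above applies and gives tightness of $(\gamma^\delta)$ in $\cS$. What I actually want to extract from it is the quantitative statement at its core: a modulus $\psi(\rho)\downarrow 0$ and, for each $M$, an event of probability at least $1-CM^{-\lambda}$ \emph{uniformly in $\delta$} on which $\gamma^\delta$ carries a parameterization with modulus of continuity $M\psi(\cdot)$. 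This uniform a priori regularity is the engine for everything else.

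Next I would upgrade to tightness of the \emph{capacity}-parameterized curves in $C([0,\infty))$. The only gap is that the capacity clock might run fast, i.e.\ the curve might travel a macroscopic distance during a short capacity interval $[s,t]$; but $K^\delta_t\setminus K^\delta_s$ has half-plane capacity $2(t-s)$, so this can only happen if $\gamma^\delta[s,t]$ plunges into a fjord of $H^\delta_s$ with a narrow mouth --- an unforced crossing of an annulus of definite modulus --- which the KS Condition suppresses quantitatively (using the domain Markov property to apply the bound at each dyadic capacity time and summing over dyadic scales). This gives, for every $\eta>0$, $\lim_{h\to0}\limsup_{\delta\to0}\IP(\sup_{|t-s|\le h,\ s,t\le T}|\gamma^\delta(t)-\gamma^\delta(s)|>\eta)=0$, hence tightness of $(\gamma^\delta)$ in $C([0,\infty))$. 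For tightness of $(W^\delta)$ I would then combine, for $0\le s<t\le T$ and $K^\delta_{s,t}:=g^\delta_s(K^\delta_t\setminus K^\delta_s)$, the estimate from Lemma \eqref{DiamKt} applied to the Loewner chain shifted to start at time $s$,
\[
 |W^\delta_t-W^\delta_s|\ \le\ \max_{u\in[s,t]}|W^\delta_u-W^\delta_s|\ \le\ C\,\diam\big(K^\delta_{s,t}\big),
\]
with the remark that, on the good event, a small connected set separating $K^\delta_t\setminus K^\delta_s$ from $\infty$ in $H^\delta_s$ is mapped by $g^\delta_s$ to a small set separating $K^\delta_{s,t}$ from $\infty$, so $\diam(K^\delta_{s,t})$ is forced to be small. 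Together with $W^\delta_0=0$, this yields tightness of $(W^\delta)$ in $C([0,\infty))$ with the uniform-on-compacts topology.

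For the equivalence of the three topologies and agreement of the limits, I would argue as follows: if one of the sequences converges along a subsequence, then by the tightness just proved all three converge along a common further subsequence. On the a priori-regular sets above, the map $W\mapsto\gamma$ --- solve the reverse Loewner equation \eqref{rLE} and take the trace, whose existence and continuity are guaranteed there, ruling out the pathologies (a)--(d) of Figure \eqref{badeventsLE} in the limit --- and the map $\gamma\mapsto W$ given by $W_t=g_t(\gamma(t))$ are continuous and mutually inverse. Hence the three subsequential limits determine one another, and in particular the limiting curve is driven by the limiting driving process; since the subsequence was arbitrary, the full sequences converge simultaneously with matching limits.

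The main obstacle is the uniform a priori regularity used in the second paragraph (equivalently, the quantitative separation estimate there): one must convert the probabilistic crossing bound of the KS Condition into the deterministic geometric statement ``a small capacity increment forces a small spatial increment unless there is an unforced crossing,'' uniformly over all the conformal pictures $H^\delta_s$, and then iterate the $1/2$-bound over dyadic scales to a genuine power law without losing uniformity in $\delta$. This is precisely the technical heart of \cite{AB} and \cite{KS}; everything else above is bookkeeping around Lemma \eqref{DiamKt} and Proposition \eqref{hullsLE}.
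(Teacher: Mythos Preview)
The paper does not prove this statement: it is quoted verbatim as Corollary 1.7 of \cite{KS} and invoked as background, with no proof given here. So there is no ``paper's own proof'' to compare against.

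As a sketch of the argument in \cite{KS}, your proposal is on the right track and identifies the correct ingredients: the Aizenman--Burchard regularity to get tightness in $\cS$, the crossing bound to control the capacity-parameterized modulus of continuity (ruling out deep fjords with narrow mouths), Lemma \eqref{DiamKt} to transfer this to equicontinuity of $W^\delta$, and Proposition \eqref{hullsLE} to close the loop between curves and driving terms. You also correctly flag the genuine technical core---converting the KS Condition into uniform a priori regularity across all slit domains $H^\delta_s$---and defer it to \cite{AB,KS}. One point to tighten: in the equivalence step you assert that the map $W\mapsto\gamma$ (solve \eqref{rLE} and take the trace) is continuous on the good sets; this is true but is itself a nontrivial consequence of the a priori regularity (existence and continuity of the trace is not automatic from continuity of $W$ alone), and in \cite{KS} this direction is handled by showing that any subsequential limit in $\cS$ is a Loewner curve whose driving term is the limit of the $W^\delta$, rather than by invoking continuity of $W\mapsto\gamma$ directly. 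Your outline would benefit from making that asymmetry explicit.
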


In \cite{V}, Viklund examines the second approach to develop a framework for obtaining a power law convergence rate to an SLE curve from a power law convergence rate for the driving function provided some additional geometric information, related to crossing events, along with an estimate on the growth of the derivative of the SLE map. For the additional geometric information, Viklund introduces a geometric gauge of the regularity of a Loewner curve in the capacity parameterization called the \textit{tip structure modulus}.
\begin{definition} \label{tipstructuredef}
 For $s,t\in [0,T]$ with $s\leq t$, we let $\gamma_{s,t}$ denote the curve determined by $\gamma(r), \; r\in[s,t]$.
 Let $S_{t,\delta}$ to be the collection of crosscuts $\cC$ of $H_t$ of diameter at most $\delta$ that separate $\gamma(t)$ from $\infty$ in $H_t$. For a crosscut $\cC\in S_{t,\delta}$,
 \[
  s_{\cC}: = \inf \{s>0 : \gamma[t-s,t]\cap \overline{\cC}\neq \emptyset \}, \; \; \gamma_{\cC} := (\gamma(r), r\in[t-s_C, t]).
 \]
 Define $s_{\cC}$ to be $t$ if $\gamma$ never intersects $\overline{\cC}$.
 For $\delta > 0$, the \textit{tip structure modulus} of $(\gamma(t), t\in[0,T])$ in $H$, denoted by $\eta_{\text{tip}}(\delta)$, is the maximum of $\delta$ and
 \[
    \sup_{t\in[0,T]} \sup_{\cC\in S_{t,\delta}} \text{diam}\gamma_{\cC}.                                                                                                                                                                                                                                                                                                    \]
    (In the radial setting, it is defined similarly.)
 In some sense, $\eta_{\text{tip}}(\delta)$ is the maximal distance the curve travels into a fjord with opening smaller than $\delta$ when viewed from the point toward which the curve is growing.
\end{definition}
The following lemma collects the results in \cite{V} and tailors them for the situation where a discrete model Loewner curve approaches an SLE curve in the scaling limit, see Lemma 3.4 in \cite{V}.

\begin{lemma}\label{Vmain}
Consider $(f_j, W_j)$ $\IH-$Loewner pair generated by the curves $\gamma_j$ where $f_j$ satisfies the reverse Loewner flow with continuous driving term $W_j$ \eqref{rLE} for $j=1,2$. Fix $T<\infty$ and $\rho>1$. Assume that there exists $\beta<1, \; r\in (0,1), \; p\in (0,1/\rho)$ and $\epsilon > 0$ such that the following holds with $d_* = \epsilon^p$.
\begin{enumerate}
 \item There is a polynomial rate of convergence of driving processes:
 \[
  \sup_{t\in[0,T]} |W_1(t) - W_2(t)| \leq \epsilon.
 \]
 \item \label{dervbound} An estimate on the growth of derivative of the SLE map: There exists a constant $c'<\infty$ such that the derivative estimate
 \[
 \sup_{t\in[0,T]} d~\left|f_2'(t,W_2(t)+id)\right| \leq c'd^{1-\beta} \;\;\;\;\; \forall~d\leq d_*.
\]

\item Additional quantitative geometric information related to crossing probabilities: There exists a constant $c<\infty$ such that the tip structure modulus for $(\gamma_1(t), \; t\in [0,T])$ in $\IH$ satisfies
\[
 \eta_{\text{tip}}(d_*) \leq c d_*^r. \label{assumptiontip}
\]

\end{enumerate}
Then there is a constant $c'' = c''(T,\beta, r, p, c, c')<\infty$ such that
\[
 \sup_{t\in[0,T]} |\gamma_1(t) - \gamma_2(t)| \leq c'' \max\{\epsilon^{p(1-\beta)r}, \epsilon^{(1-\rho p)r} \}.
\]

The analogous statement holds for $\ID-$Loewner pairs.
\end{lemma}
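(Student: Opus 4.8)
The plan is to compare the two curves at a fixed capacity time $t\in[0,T]$ by inserting the conformal maps $g_1(t,\cdot)=f_1(t,\cdot)^{-1}$ of the first Loewner chain, so that the tip $\gamma_1(t)$ is sent to $W_1(t)\in\IR$ and the comparison is localized near a boundary point where we have good control. Concretely, I would write $\gamma_j(t)=f_j(t,W_j(t)+i\,d_j)$ for a small imaginary displacement $d_j$ (taking $d_j\to 0$ at the end, or a fixed $d_*$ and then controlling the error of moving to the boundary), and split $|\gamma_1(t)-\gamma_2(t)|$ into: (i) a term measuring how far $\gamma_1(t)$ is from the image under $f_1(t,\cdot)$ of the horizontal line at height $d_*$, which is exactly what the tip structure modulus $\eta_{\text{tip}}$ controls via hypothesis (3); (ii) a term $|f_1(t,W_1(t)+i d_*)-f_2(t,W_2(t)+i d_*)|$ comparing the two maps at the same point, which is driven by the sup-norm closeness $|W_1-W_2|\le\epsilon$ of the driving functions together with a Loewner/Grönwall-type stability estimate; and (iii) the symmetric term for $\gamma_2$, controlled by the derivative estimate (2): since $\partial_d f_2(t,W_2(t)+id)=f_2'(t,W_2(t)+id)$ and $d|f_2'|\le c'd^{1-\beta}$, integrating in $d$ from $0$ to $d_*$ gives that $\gamma_2(t)$ lies within $O(d_*^{1-\beta})=O(\epsilon^{p(1-\beta)})$ of $f_2(t,W_2(t)+id_*)$.

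The core estimate is (ii). Here I would use the reverse Loewner equation \eqref{rLE} for both pairs and a standard Grönwall argument: set $h(t,z)=f_1(t,z)-f_2(t,z)$ and estimate $\partial_t h$ in terms of $|W_1(t)-W_2(t)|\le\epsilon$ and the derivatives $f_j'$. The denominator $z-W_j(t)$ is bounded below by $d_*$ in absolute value when $z=W_1(t)+id_*$ (and similarly after flowing, using that the imaginary part of $g_t(z)$ only decreases in a controlled way), so the singular kernel is tamed at scale $d_*$; the factor $f_j'$ is then bounded through hypothesis (2) (for $j=2$) and through the a priori consequence of the tip-structure/KS machinery for $j=1$ — or, more cleanly, one bounds $|f_1'|$ by combining (2) applied after swapping roles, or by the general fact that when the two driving terms are $\epsilon$-close and one derivative estimate holds, the other follows on the same scale up to constants. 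Tracking the powers of $d_*$ picked up from the kernel ($d_*^{-1}$) against the gain $\epsilon$ from the driving terms, against the length of the time interval $T$, produces a bound of the form $C(T)\,\epsilon\,d_*^{-1}=C(T)\,\epsilon^{1-p}$ for term (ii) before applying the outer map, and after composing with $f_1(t,\cdot)$ and using its modulus of continuity (again via the derivative bound, giving Hölder exponent essentially $1-\beta$ near the boundary and Lipschitz-with-constant-$d_*^{-\beta}$ in the bulk) one arrives at a contribution of order $\epsilon^{(1-\rho p)r}$, where the exponent $\rho$ absorbs the distortion of the conformal map near the real line and $r$ comes from feeding the resulting scale back into $\eta_{\text{tip}}$.

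Assembling the three contributions: term (i) is $\le \eta_{\text{tip}}(d_*)\le c\,d_*^r=c\,\epsilon^{pr}$, wait — more precisely, after composing the $d_*$-scale estimates with the tip structure modulus one gets the two competing exponents, $\epsilon^{p(1-\beta)r}$ from the derivative-governed term and $\epsilon^{(1-\rho p)r}$ from the driving-term/stability-governed term, and the final bound is their maximum, uniformly in $t\in[0,T]$, with $c''$ depending only on $T,\beta,r,p,c,c'$. Taking the supremum over $t$ and noting all estimates were uniform in $t$ completes the chordal case; the radial case is identical after replacing the kernel $2/(z-W)$ by the radial Loewner kernel and the displacement $W(t)+id$ by $(1-d)W(t)$, using Proposition \ref{A.5} in place of the chordal derivative estimate.

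The main obstacle I expect is term (ii): controlling the reverse-Loewner flow difference uniformly down to the scale $d_*$ without the singular kernel destroying the gain from $|W_1-W_2|\le\epsilon$. This requires carefully quantifying how $\Im g_t(z)$ decreases along the flow (it can shrink, which makes the kernel worse over time) and balancing this against the capacity time horizon $T$; it is precisely this trade-off that forces the restriction $p<1/\rho$ and fixes the exponent $\rho$. Getting the exponents to come out as the clean maximum stated, rather than something lossier, is the delicate accounting step — but it is essentially the content of Lemma 3.4 of \cite{V}, so the work is in verifying that the discrete-to-SLE setup satisfies its hypotheses with the stated parameters.
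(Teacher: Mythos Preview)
The paper does not actually prove this lemma: it is stated as a repackaging of Lemma~3.4 of \cite{V} and no argument is given in the text beyond the citation. So there is no ``paper's own proof'' to compare against; what can be assessed is whether your sketch is a faithful outline of Viklund's argument.

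Your three–term decomposition is the right skeleton and matches Viklund's strategy: control $|\gamma_2(t)-f_2(t,W_2(t)+id_*)|$ by integrating the derivative bound (2); control $|\gamma_1(t)-f_1(t,W_1(t)+id_*)|$ by the tip structure modulus (3) via the mechanism of Proposition~\ref{boundtotip}; and use a Loewner stability estimate for the middle term. Your final sentence correctly identifies that this \emph{is} Lemma~3.4 of \cite{V}.

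There are, however, two genuine soft spots in your sketch. First, the proposal to run a Gr\"onwall argument directly on the reverse Loewner PDE \eqref{rLE} for term~(ii) is problematic: that equation carries the factor $f_j'$, and you only have a derivative bound for $f_2$, not $f_1$. Your fallback (``bound $|f_1'|$ by swapping roles'' or ``when the driving terms are $\epsilon$-close and one derivative estimate holds, the other follows'') is not available here --- assumption~(2) is one-sided, and no such transfer lemma is proved or cited. In Viklund's argument the stability step is carried out on the \emph{forward} flows $g_j$ (an ODE, no $f'$ factor), which gives a clean bound of order $\epsilon/d_*^{\,\rho-1}$ at height $d_*$; this is where the parameter $\rho>1$ and the constraint $p<1/\rho$ actually enter, not from ``distortion of the conformal map near the real line'' as you suggest. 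Second, your exponent bookkeeping is visibly unresolved (the ``wait --- more precisely'' and the reappearance of the $r$ in both final exponents): the extra factor $r$ on \emph{both} terms of the maximum comes from feeding the intermediate scale produced by steps~(ii) and~(iii) back through the tip structure modulus of $\gamma_1$, not from applying $\eta_{\mathrm{tip}}$ only once to $d_*$. Until that routing is made explicit, the stated conclusion does not follow from your three displayed contributions.
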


\begin{remark}
 In \cite{V}, an estimate is given explicitly in terms of $d_*$ and $\beta$ on the probability of the event that the estimate in $2$ holds uniformly in $t\in[0,T]$ when $f(t,z)$ is the chordal (and radial) $SLE_{\kappa}$ Loewner chain. This is the derivative bound for chordal (and radial) SLE, see Proposition \eqref{derest1} and \eqref{A.5}. It is also worth noticing that the estimate on the tip structure modulus is only required on the scale of $\Delta_n(t,\epsilon^p)$ and later we will discuss that the failure of the existence of such a bound implies certain crossing events for the curve which we will exploit.

\end{remark}

We build upon these earlier works to show that if the condition required for Kempannien and Smirnov's \cite{KS} framework is satisfied then one is able to obtain the needed additional geometric information for Viklund's framework. The end result is to obtain a power-law convergence rate to an SLE curve from a power-law convergence rate for the driving terms provided the discrete curves satisfy the KS Condition, a bound on annuli crossing events.

\subsection{Sufficient Conditions on Martingale Observable}

 Both approaches for proving convergence to SLE show that to construct conformally invariant scaling limit for some model, we need apriori estimates and a nontrivial martingale observable with conformally invariant scaling limit. This leads us to the question of how might one construct such a martingale observable. One observation we can make is that we can define conformal invariance for a model with many different definitions. The usual way it is defined in the literature is as conformal invariance of interfaces. That is, conformal invariance of the law of the random curves. An alternative definition is to have conformal invariance refer to the fact that relevant observables of the model are conformally covariant in the scaling limit. Suppose that for every simply connected domain $D$ with marked points $a_1,\cdots, a_n\in\overline{D}$, we have defined a random curve $\gamma$ starting at $a_1$. We can define analogues of conformal invariance and domain Markove property for the observable.

\begin{definition} A function (or differential) $F(D;a_1,\cdots,a_n): D\rightarrow \IC$ indexed by simply connected domains with marked points $a_1,\cdots, a_n\in\overline{D}$ is a \textit{conformally covariant martingale} for a random curve $\gamma$ if
\begin{itemize}
    \item F is \textit{conformally covariant}: if there exists
 $\alpha, \alpha', \beta_1, \beta_1', \cdots, \beta_n, \beta_n' > 0$ such that for any domain $D$ and any conformal map $\varphi:D\rightarrow\IC$, we have the following:
 \begin{multline*}
  F(\varphi(D);\varphi(a_1),\cdots,\varphi(a_n)) =\\ \varphi'(z)^\alpha \overline{\varphi'(z)^{\alpha'}}\varphi'(a_1)^{\beta_1}\overline{\varphi'(a_1)^{\beta_1'}}\cdots \varphi'(a_n)^{\beta_n}\overline{\varphi'(a_n)^{\beta_n'}}\cdot F(D;a_1,\cdots, a_n).
 \end{multline*}
 If $\alpha=\beta_1=\beta_1'=\cdots=\beta_n=\beta_n'=0$, then the family is \textit{conformally invariant}
 \item and $F(D\backslash\gamma[0,t];\gamma(t),a_2,\cdots,a_n)$ is a martingale with respect to $\gamma$ drawn from $a_1$ (with Loewner parameterization).
\end{itemize}

\end{definition}
Notice that we do not ask for covariance at $a_1$. There are two main reasons for this. First, we can always rewrite it as covariance in other points. Secondly, it would be troublesome because once we start drawing the curve it becomes nonsmooth in the neighbourhood of $a_1$.

These properties can be combined to show that for any curve $\gamma$ mapped to $\mathbb{H}$ where $a\mapsto 0$, $b\mapsto\infty$ and $c\mapsto x$, we have
\[
F(\mathbb{H};0,\infty,g_t(x),\cdots)\cdot g'_t(x)^\eta\overline{g'_t}(x)^\delta \cdots
\]
is a martingale with respect to random Loewner evolution. The covariance factor at $\infty$ is missing since $g'_t(\infty)=1$. If we evaluate $F$ using exactly the same machinery as the one used by Lawler, Schramm, and Werner in \cite{LSW} (or used by Smirnov in \cite{MR2227824}) to translate the fact that this is a martingale to information about the Loewner driving functions and the identification of this driving process as appropriately scaled Brownian motion, we arrive at the following generalization of Schramm's principle due to Smirnov, see \cite{S}:

\noindent\textbf{Martingale Principle:} If a random curve $\gamma$ admits a (non trivial) conformal covariant martingale $F$, then $\gamma$ is given by SLE with $\kappa$ (and a drift depending on the modulus of the configuration) derived from F.

It turns out that the martingale property together with the convergence to a conformally covariant object is sufficient to imply the convergence of the interface to an SLE. Thus, we want a discrete object which in limit becomes a conformally covariant martingale. We know that functions which are observables have the martingale property built in so only conformal covariance needs to be established. So, what is necessary to implement this machinery for a discrete observable? Generally, the observable arises as a solution of some discrete boundary value problem depending on the model. If these are polynomially close then we can approximate the observable by its continuous counterpart which we know explicitly and so we can Taylor expand around an appropriately chosen interior point. Thus, we can apply the machinery to approximate the observable as a function of time and the Loewner driving function. If we do this for two appropriately chosen vertices, then we arrive at information about the Loewner driving process for $\gamma$ and are able to identify this driving process as appropriately scaled Brownian motion. Hence, knowing that this is a martingale for two appropriately chosen vertices is sufficient to characterize the large-scale behavior of $\gamma$. This leads us to the following Lemma, which is a version of BPZ equations (\cite{BPZeq}):

\begin{lemma}
Given a conformally invariant martingale $F=F(\IH;0,\infty, z^1,\cdots, z^m; c^1\cdots, c^l)$ where $z^1,\cdots,z^m$ are interior points of $\IH$ and $c^1,\cdots, c^l$ are boundary points, there exists some $0< \kappa\leq 8$ such that it satisfies for $z^j = x^j+iy^j$:

\begin{align}\label{eq:BPZ}
     \sum_{j=1}^m \left(\frac{1}{(x^j)^2+(y^j)^2} \left(x^j\frac{\partial F}{\partial x^j} - y^j \frac{\partial F}{\partial y^j} \right) \right)
     + \sum_{j=1}^l \frac{1}{c^j}\frac{\partial F}{\partial c^j} + \frac{\kappa}{2}\left(\sum_{j=1}^m \frac{\partial^2 F}{(\partial x^j)^2} + \sum_{j=1}^l \frac{\partial^2 F}{(\partial c^j)^2} \right)
     = 0
\end{align}

\end{lemma}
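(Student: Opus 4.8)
The plan is to derive the PDE \eqref{eq:BPZ} directly from the defining martingale property of $F$ by computing the drift of the process
\[
M_t = F\bigl(\IH; 0, \infty, g_t(z^1),\dots, g_t(z^m); g_t(c^1),\dots, g_t(c^l)\bigr)\cdot\prod_j g_t'(z^j)^{\beta_j}\overline{g_t'(z^j)^{\beta_j'}}\cdots
\]
under the hypothesis that the curve $\gamma$ is an SLE$_\kappa$ (which the Martingale Principle quoted above already guarantees for some $\kappa\in(0,8]$). Concretely, first I would conformally map the configuration to $\IH$ with $a\mapsto 0$, $b\mapsto\infty$, so that by conformal invariance $F$ depends only on the marked points $z^j, c^j$, and the evolving Loewner maps $g_t$ satisfy \eqref{LDE} with driving term $W_t = \sqrt\kappa B_t$. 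Since $F$ is \emph{conformally invariant} (all the $\alpha,\beta$ exponents vanish), the covariance factors drop out and $M_t = F(\IH;0,\infty,g_t(z^1),\dots;g_t(c^1),\dots)$ is itself a martingale; the weighted version above is only needed in the covariant case.

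Next I would write down the evolution of the arguments: for an interior point $z^j$ with $\zeta_t^j = g_t(z^j) = X_t^j + iY_t^j$, the chordal Loewner equation gives $d\zeta_t^j = \frac{2\,dt}{\zeta_t^j - W_t}$, hence, splitting into real and imaginary parts (using $W_t$ real),
\[
dX_t^j = \frac{2(X_t^j - W_t)}{(X_t^j - W_t)^2 + (Y_t^j)^2}\,dt,\qquad
dY_t^j = \frac{-2 Y_t^j}{(X_t^j - W_t)^2 + (Y_t^j)^2}\,dt,
\]
and similarly $dc_t^j = \frac{2\,dt}{c_t^j - W_t}$ for boundary points. Now apply Itô's formula to $M_t = F(\dots)$. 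The driving term contributes $dW_t = \sqrt\kappa\,dB_t$, so the quadratic-variation terms produce $\frac{\kappa}{2}\bigl(\sum_j \partial_{X^j}^2 F + \sum_j \partial_{c^j}^2 F\bigr)\,dt$ together with the cross-second-derivatives that also appear because all coordinates are driven by the \emph{same} Brownian motion through $W_t$ — I should be careful here, but in fact each coordinate moves with a common $-\,dW_t$ increment in its ``$W_t$'' slot, and after a shift of variables one sees the relevant Hessian is exactly the one in $X^j$ and $c^j$; this is the standard computation behind the second-order operator in \eqref{eq:BPZ}. The drift terms from $dX_t^j, dY_t^j, dc_t^j$ assemble into $\sum_j \frac{1}{(X^j)^2+(Y^j)^2}\bigl(X^j \partial_{X^j}F - Y^j \partial_{Y^j}F\bigr) + \sum_j \frac{1}{c^j}\partial_{c^j}F$ once one evaluates at time $0$, where $W_0 = 0$ and $g_0 = \mathrm{id}$ so $X_0^j = x^j$, $Y_0^j = y^j$, $c_0^j = c^j$. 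Setting the total $dt$-coefficient (the drift of the martingale $M_t$) equal to zero at $t=0$ yields precisely \eqref{eq:BPZ}.

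The main obstacle I anticipate is \emph{regularity / justification of Itô's formula}: $F$ is a priori only assumed to be a function on configuration space, not known to be smooth, and the marked points may approach the boundary or the tip, where $F$ degenerates; moreover one must rule out local-martingale-vs-martingale subtleties and justify interchanging the ``drift $=0$'' conclusion with evaluation at $t=0$. I would handle this by first invoking the Martingale Principle to know $\gamma$ is SLE$_\kappa$, then using that the observables of interest are real-analytic in the interior (being limits of discrete harmonic/holomorphic functions, or solutions of the relevant BVP) so that $F$ is smooth in the $z^j$ away from $0,\infty$ and the other marked points, and finally localizing: stop the process before any $\zeta_t^j$ gets within a fixed distance of $W_t$ or of another marked point, derive the PDE on that region, and then let the stopping parameter go to zero. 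Since the identity \eqref{eq:BPZ} is pointwise in $(x^j,y^j,c^j)$, establishing it on an exhausting family of such regions suffices. The determination of the single constant $\kappa$ is automatic: it is the diffusivity of the driving Brownian motion supplied by the Martingale Principle, and it is the same $\kappa$ for all choices of marked points because enlarging the set of marked points only adds equations that must be simultaneously consistent.
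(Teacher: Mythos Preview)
Your approach is essentially the same as the paper's: apply It\^o's formula to the conformally transported observable under the SLE$_\kappa$ Loewner flow and set the drift to zero at $t=0$. The only presentational difference is that the paper works from the outset with the shifted coordinates $\xi_t(z)=g_t(z)-W_t$ (so that $d\xi_t=\frac{2}{\xi_t}dt-\sqrt\kappa\,dB_t$), which is exactly the ``shift of variables'' you allude to and which makes the appearance of the second-order terms immediate; your added care about the common driving Brownian motion and about regularity/localization is appropriate and in fact more explicit than what the paper records.
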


\begin{proof} Recall the definition and notation of section \eqref{LEandSLE}. For $t<\tau_1$, consider the conformal mapping of $\IH\backslash K_t$ onto $\IH$ defined as $\xi_t(z) = g_t(z) - W_t$ so that $\lim_{z\rightarrow\infty}(g_t(z)-z)=0$ and $g_t(\infty)=\infty$. That is, $g_t$ is the solution to the chordal Loewner evolution with $W_t = \sqrt{\kappa}B_t$ where $B_t$ is a standard one-dimensional Brownian motion. Observe that $\overline{g_t}(z)$ satisfies the same ODE.

Notice that for $z\in\IH$ and $t>0$,
\begin{align*}
d\xi^z_t & = \frac{2}{\xi_t(z)} dt - \sqrt{\kappa}dB_t \\
d\overline\xi^z_t & = \frac{2}{\overline\xi_t(z)} dt - \sqrt{\kappa}dB_t
\end{align*}

Define $\hat\xi_t := (\xi^{z^1}_t,\cdots, \xi^{z^m}_t,\xi^{c_1}_t,\cdots, \xi^{c^l}_t)^T$ where $B^i_t$ are independent standard one-dimensional Brownian motions. Then
\[
d\hat\xi_t = \frac{2}{\hat\xi_t}dt - \sqrt{\kappa} \mathrm{I} d\hat{B}_t
\]
where $\hat{B}_t = (B^1_t,\cdots, B^{l+m}_t)^T$ and I is the $(l+m)$ identity matrix.

For $F:(\IH,0,\infty,z^1,\cdots,z^m,c^1,\cdots,c^l)\rightarrow\IC$ a $\mathcal{C}^2$ function and $(\hat\xi_t)_{t\geq 0}$ is a semi-martingale then (the complex version of) Ito's formula yields:
\begin{align*}
dF(\hat\xi_t) & = \nabla_{\xi} F d\hat\xi_t + \nabla_{\overline\xi}F(\hat\xi_t)d\overline{\hat\xi}_t + \frac{1}{2} \sum^{l+m}_{i=1}\sum^{l+m}_{j=1} \partial^i\partial^j F d\langle\xi^i,\xi^j\rangle_t
+ \frac{1}{2} \sum^{l+m}_{i=1}\sum^{l+m}_{j=1} \overline{\partial}^i\overline{\partial}^j F d\langle\overline{\xi}^i,\overline{\xi}^j\rangle_t  \\
& + \sum^{l+m}_{i=1}\sum^{l+m}_{j=1} \partial^i\overline{\partial}^j F d\langle\xi^i,\overline{\xi}^j\rangle_t
\end{align*}
where $\partial = \frac{\partial}{\partial z}$ and $\overline\partial = \frac{\partial}{\partial \overline{z}}$ are the usual Wirtinger derivatives. Hence, we have for $z^j=x^j+iy^j$
\begin{multline*}
   dF(\hat\xi_t)  = \\ \left(\sum_{j=1}^m \left(\frac{1}{(x^j)^2+(y^j)^2}\left( x^j\frac{\partial F}{\partial x^j} - y^j \frac{\partial F}{\partial y^j}\right) \right) + \sum_{j=1}^l \frac{1}{c^j}\frac{\partial F}{\partial c^j} + \frac{\kappa}{2} \left(\sum_{j=1}^m\frac{\partial^2 F}{(\partial x^j)^2} + \sum_{j=1}^l\frac{\partial^2 F}{(\partial c^j)^2} \right)\right) dt\\ - \left( \sum_{j=1}^m \frac{\partial F}{\partial x^j} + \sum_{j=1}^l \frac{\partial F}{\partial c^j}  \right) d W^i_t.
\end{multline*}
Since $F$ is a martingale, we have that
\[
\sum_{j=1}^m \left(\frac{1}{(x^j)^2+(y^j)^2} \left(x^j\frac{\partial F}{\partial x^j} - y^j \frac{\partial F}{\partial y^j} \right) \right)
     + \sum_{j=1}^l \frac{1}{c^j}\frac{\partial F}{\partial c^j} + \frac{\kappa}{2}\left(\sum_{j=1}^m \frac{\partial^2 F}{(\partial x^j)^2} + \sum_{j=1}^l \frac{\partial^2 F}{(\partial c^j)^2} \right)
     = 0
\]
\end{proof}

This leads us to Definition \eqref{def:family}.

\begin{remark}
Alternatively, we could ``reverse engineer'' and start with a discrete function $F$ (apriori not related to lattice models) which has conformally covariant scaling limit by construction. Then connect to a particular lattice model establishing the martingale property. For more details and discussions see \cite{Schramm} and \cite{S}.
\end{remark}

In the following, we will use the following notation.

\begin{definition}
We call a solution of the equation \eqref{eq:BPZ} \emph{degenerate} if there is an open set ${\mathcal V}\subset\IH^k\times\left(\IR\setminus\{0\}\right)^l$ such that
 \label{eq:degenerate}  \[
\displaystyle\sum_{j=1}^k \dfrac{\partial F}{\partial x^j} + \displaystyle\sum_{j=1}^l \dfrac{\partial F}{\partial c^j} = \displaystyle\sum_{j=1}^k \dfrac{\partial^2 F}{(\partial x^j)^2} + \displaystyle\sum_{j=1}^l \dfrac{\partial^2 F}{(\partial c^j)^2}.
\]
and \emph{non-degenerate} otherwise.
\end{definition}

\subsection{Main Theorem}

The proof of the main theorem involves two key steps. The first step is to derive a rate of convergence for the driving terms. Then we extend the result based on the work of Viklund in \cite{V} to a rate of convergence for the curves. All the a priori estimates required to extend from a convergence of driving terms to a convergence of paths follows from the discrete curve satisfying the Kempannien-Smirnov condition. The method developed here is a general framework for obtaining the rate of convergence of scaling limit of interfaces for various models in statistical physics known to converge to SLE curves, provided the martingale (or almost martingale) observable converges polynomially to its continuous counterpart.

Let us describe the set-up for the main theorem. Fix conformal transformation $\Phi:\ID\rightarrow\IH$ such that $z\mapsto i \frac{z+1}{1-z}$. Let $d_*(\cdot,\cdot)$ be the metric on $\overline{\IH}\cup \{ \infty\}$ defined by $d_*(z,w) = | \Phi^{-1}(z) - \Phi^{-1}(w)|$. If $z\in\overline{\IH}$ then $d_*(z_n,z)\rightarrow 0$ is equivalent to $|z_n-z|\rightarrow 0$ and $d_*(z_n,\infty)\rightarrow 0$ is equivalent to $|z_n|\rightarrow \infty$. This is a metric corresponding to mapping $(\IH,0,\infty)$ to $(\ID,-1,1)$ which is convenient because it is compact.
Let $D$ be a simply connected bounded domain with distinct accessible prime ends $a,b,\in\partial D$, let $D^{n} \subset D$ denote the $n^{-1}$-lattice approximation of $D$. Fix the maps $\phi:(D,a,b)\rightarrow(\IH,0,\infty)$ and $\phi^n:(D^n,a^n,b^n)\rightarrow (\IH,0,\infty)$ so that $\phi^n(z)\rightarrow \phi(z)$ as $n\rightarrow \infty$ uniformly on compact subsets of $D$ with $\phi^n(a^n)\rightarrow\phi(a)$ and $\phi^n(b^n)\rightarrow\phi(b)$ and satisfying the hydrodynamic normalization : $\phi^{n}\circ\phi^{-1}(z)-z\rightarrow 0$ as $z\rightarrow \infty$ in $\IH$. Then let $\tilde\gamma^{n} = \phi^n(\gamma^{n})$ where $\gamma^{n}$ is a random curve on $n^{-1} L$ lattice between $a^n$ and $b^n$. Thus, $\tilde\gamma^{n}$ is a random curve in $(\IH,0,\infty)$ parameterized by capacity. Let $D^n_t=D^n\backslash \gamma^n[0,t]$, and $g_t^{n}: \IH_t^{n}= \phi^{n}(D_t^{n})\rightarrow \IH$ be the corresponding Loewner evolution with driving terms $W_t^{n}$. Thus, $(f_t^{n} = (g_t^{n})^{-1}, W_t^{n})$ is a $\IH-$Loewner pair generated by $\tilde\gamma^{n}$.

To deal with multi-variable observables, let us introduce some notations. Let, for some fixed $k$ and $l$, $\hat{v}^n=(v_1^n,\dots, v_k^n; c_1^n,\dots, c_l^n)$ be a collection of $k$ interior points of the domain $D^n$ and $l$ boundary points of $D^n$. We assume that $k\geq1$, $l\geq0$. Let $V^{k,l}(D^n)$ be the set of all such collections.

Define
$$\hat{\phi}(\hat{v}^n):=(\phi(v_1^n),\dots, \phi(v_k^n); \phi(c_1^n),\dots, \phi(c_l^n))$$
and
$$\hat{\phi^n}(\hat{v}^n):=(\phi^n(v_1^n),\dots, \phi^n(v_k^n); \phi^n(c_1^n),\dots, \phi^n(c_l^n))$$
Since $D^n$ is a Jordan domain, $\phi^n(c_j^n)$ is well-defined, by Caratheodory Theorem.

\begin{definition}\label{def:family}
Let $\{\gamma_{(D,a,b)}^n\}$ be a family of probability measures on curves indexed by simply connected domains $D$, boundary prime ends $a,\, b$ of $D$, and $n\in\IN$ such that for each $D$ and $n$ it is supported on simple curves joining $a^n$ and $b^n$ in $D^n$.
We say that the family $\{\gamma_{(D,a,b)}^n\}$ is \emph{weakly admissible}  it satisfies domain Markov Property and one can find some $m,\ l\in\IN$, $s\in(0,1)$, and a positive function $n_0(R)$, such that for any conformal isomorphism $\phi: D\to\IH$ with $\phi(a)=0,\,\phi(b)=\infty$ there exists a stopping time $T=T(\phi)>0$,   such that as long as $\left|\left(\phi^{-1}\right)'(i)\right|>R$, for every $n\geq n_0(R)$ the following holds.

\begin{enumerate}
 \item\label{cond:1} There is a multivariable discrete almost martingale observable $$H^{n}_D = H^{n}_{(D^{n},a^{n},b^{n})}\,:\,V^{k,l}(D^n)\to\IC$$ associated with the curve $\gamma^{n}$. That is, $H^{n}_t =H^{n}_{(D^{n}_t, \gamma^{n}(t), b^{n})}$ is almost a martingale (for any fixed $\hat{v}\in V^{k,l}(D^n)$) with respect to the (discrete) interface $\gamma^{n}$ growing from $a^{n}$: $$\left| H^{n}_{D_t}(\hat{v}) -  \mathbb{E}\left[H^{n}_{D^n_{t'}}(\hat{v}) \; | D^n_t\right] \right| \leq  n^{-s}, \text{ for }0\leq t\leq t'\leq T\text{ and all }\hat{v}\in V^{k,l}(D^n).$$

 \item \label{cond:2a} There is a $\mathcal{C}^3$ function $h:\IH^k\times\left(\IR\setminus\{0\}\right)^l\mapsto\IR$  and an open set ${\mathcal V}\subset\IH^k\times\left(\IR\setminus\{0\}\right)^l$ such that the $h$ is a non degenerate solution to the BPZ equation (\eqref{eq:degenerate}).

  \item\label{cond:2b} $H^{n}$ is polynomially close to its continuous conformally invariant counterpart $h$. That is, for any $\hat{v}$ with $\hat{\phi}^n(\hat{v})\in\mathcal{V}$. 
  \[\left|H^{n}_{D}(\hat{v}) - h(\hat{\phi}^n (\hat{v})) \right| \leq  n^{-s}  \label{assup}\]
\end{enumerate}

If, \emph{in addition}, the family $\{\gamma_{(D,a,b)}^n\}$ satisfies the KS-condition, it is called \emph{admissible}.
\end{definition}

Now, we are ready to state our main Theorem.

\begin{theorem}[Main Theorem]\label{mainthm}
Let $\{\gamma_{(D,a,b)}^n\}$ be a family of admissible probability measures.
Then for some $0<\kappa<8$ (defined by \eqref{cond:2a}) there is a coupling of $\tilde\gamma^{n}$ with Brownian motion $\sqrt{\kappa}B(t), \; t\geq 0$, with the property that for  $n\geq N$,
\[
 \mathbb{P}\left\{\sup_{t\in[0,T]} d_*\left(\tilde\gamma^n(t),\tilde\gamma(t)\right)> n^{-u}  \right\} < n^{-u}
\]
for some $u\in(0,1)$ where $\tilde\gamma$ denotes the chordal SLE$_\kappa$ path for $\kappa\in (0,8)$ in $\mathbb{H}$ driven by $\sqrt{\kappa}B(t)$ and both curves are parameterized by capacity. Here $N$ depends on $s,\ n_0,$ and $T$ from Definition \eqref{def:family}. $u$ depends only on $s$ Definition \eqref{def:family}.

Moreover, if $D$ is an $\alpha$-H\"older domain, then under the same coupling, the SLE curve in the image is polynomially close to the original discrete curve:
\[
 \mathbb{P}\left\{\sup_{t\in[0,T]} d_*\left(\gamma^n(t),\phi^{-1}(\tilde\gamma(t))\right)> n^{-\lambda}  \right\} < n^{-\lambda},\text{ for  }n\geq N
\]
where $\lambda$ depends only on $\alpha$ and $u$.

\end{theorem}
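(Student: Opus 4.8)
The plan is to decompose the Main Theorem into two independent halves, following the two-step structure already flagged in the text: first prove a power-law convergence rate for the Loewner driving terms $W_t^n$ (toward $\sqrt{\kappa}B_t$), and then feed that rate into Lemma~\ref{Vmain} to upgrade to a rate for the curves themselves, all under a single coupling. For the first step, the input is the almost-martingale observable $H^n$: assumptions \eqref{cond:1}--\eqref{cond:2b} say $H^n$ is within $n^{-s}$ of a martingale and within $n^{-s}$ of the explicit conformally invariant function $h$ solving the BPZ equation \eqref{eq:BPZ} for the relevant $\kappa$. I would transport everything to $(\IH,0,\infty)$ by the uniformizing maps $\phi^n$, use the hydrodynamic normalization to control $\phi^n\circ\phi^{-1}-\mathrm{id}$ near $\infty$, and then run the standard Lawler--Schramm--Werner / Smirnov machinery: Taylor-expand $h(\hat\phi^n(\hat v))$ in the capacity time around an interior evaluation point, read off the drift and quadratic variation of $W_t^n$ from the fact that $H^n_t$ is an $n^{-s}$-almost martingale, and conclude via a martingale/Gronwall argument (e.g.\ a discrete-time Skorokhod-type coupling, cf.\ the argument behind Corollary~1.7 of \cite{KS}) that on $[0,T]$ one has $\sup_t|W^n_t-\sqrt{\kappa}B_t|\le n^{-a}$ off an event of probability $\le n^{-a}$, for some $a=a(s)>0$. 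The non-degeneracy of $h$ (Definition~\ref{eq:degenerate}) is what guarantees the system of equations for drift and diffusivity is solvable and the identification of $\kappa$ is forced; I expect the bookkeeping of the error propagation through this identification — tracking how the $n^{-s}$ errors in \eqref{cond:1} and \eqref{cond:2b}, plus the $\phi^n\to\phi$ approximation error, combine into a single exponent $a(s)$, uniformly over the admissible family and over the stopping time $T(\phi)$ — to be the most delicate part of Step~1.

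For the second step I would invoke the KS Condition, which upgrades $\{\gamma^n\}$ from merely weakly admissible to admissible. The point of the KS Condition here is exactly to supply hypothesis~(3) of Lemma~\ref{Vmain}: a power-law bound $\eta_{\mathrm{tip}}(d_*)\le c d_*^r$ on the tip structure modulus of the discrete curve at scale $d_*=\epsilon^p$ with $\epsilon=n^{-a}$. As the remark after Lemma~\ref{Vmain} indicates, the failure of such a bound forces an unforced crossing of a thin annulus (a long excursion of the curve into a fjord of opening $\le d_*$ as seen from the tip), whose probability is controlled by Condition~G3 (the power-law equivalent of the KS Condition, valid since all the conditions are equivalent). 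So I would: fix $\beta\in(\beta_+,1)$ and $q<q(\beta)$, apply Proposition~\ref{derest1} to get hypothesis~(2) of Lemma~\ref{Vmain} for the SLE side off an event of probability $\le c d_*^q$; use the G3/annulus-crossing estimate and a union bound over a polynomial net of scales and base points to get hypothesis~(3) off an event of probability $\le n^{-b}$; combine with the Step~1 estimate on an event of probability $\ge 1-n^{-a}$; and then Lemma~\ref{Vmain} outputs $\sup_{t\in[0,T]}d_*(\tilde\gamma^n(t),\tilde\gamma(t))\le c''\max\{\epsilon^{p(1-\beta)r},\epsilon^{(1-\rho p)r}\}$, which is $\le n^{-u}$ for a suitable $u=u(s)>0$ after optimizing the free parameters $p,\rho,r,\beta$. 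Collecting the exceptional events gives the probability bound $n^{-u}$ and hence the first displayed inequality of the theorem; the coupling with $\sqrt{\kappa}B$ is the one produced in Step~1.

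For the final "moreover" statement, the remaining work is purely deterministic distortion theory: on the event where $\sup_t d_*(\tilde\gamma^n(t),\tilde\gamma(t))\le n^{-u}$ we must transfer this bound back through $\phi^{-1}$ to compare $\gamma^n(t)=\phi^{-1}(\tilde\gamma^n(t))$ (approximately, up to the $\phi^n$ vs $\phi$ discrepancy, which is itself polynomially small on compacts and near the accessible prime ends $a,b$ by the lattice-approximation construction) with $\phi^{-1}(\tilde\gamma(t))$. Here the $\alpha$-Hölder hypothesis on $D$ is exactly what is needed: a conformal map onto an $\alpha$-Hölder domain is itself Hölder continuous up to the boundary (Gehring--Hayman / Jerison--Kenig type estimates), so $|\phi^{-1}(z)-\phi^{-1}(w)|\lesssim d_*(z,w)^{\alpha'}$ for some $\alpha'>0$ depending on $\alpha$, turning an $n^{-u}$ bound in $\IH$ into an $n^{-\lambda}$ bound in $D$ with $\lambda=\lambda(\alpha,u)$. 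One subtlety I would be careful about: the Hölder modulus of $\phi^{-1}$ may degenerate near $a$ and $b$, so the comparison of $\gamma^n$ and $\phi^{-1}(\tilde\gamma)$ near the endpoints should be handled separately using the accessibility of the prime ends and the control on $P_1,P_2$ from the lattice-approximation construction, exactly as in the remark following the KS precompactness theorem. Assembling these pieces, adjusting $N$ to absorb the thresholds $n_0(R)$, $T(\phi)$, and $s$, completes the proof.
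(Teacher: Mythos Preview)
Your proposal is correct and follows essentially the same two-step architecture as the paper: driving-term convergence (Theorem~\ref{drivingcvg}, via the key estimate Proposition~\ref{keyest} and Skorokhod embedding) followed by the three-event decomposition $\cA_n\cap\cB_n\cap\cC_n$ fed into Lemma~\ref{Vmain}, with the tip-structure bound supplied by the KS Condition through Proposition~\ref{tip}.

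Two points worth tightening. First, the tip-structure estimate is not obtained by a union bound over a spatial net of base points and scales; the paper instead partitions the curve into $N$ arcs of diameter $\le\eta/4$, uses the Aizenman--Burchard tortuosity bound (Hypothesis~H1, itself a consequence of KS) to control $\IP(N>m)$ by a power of $\eta$, and then union-bounds over the $O(m^2)$ pairs of arcs that could form the mouth of a fjord, each pair contributing an unforced annulus crossing at a stopping time $\tau_{j,k}$ bounded by Condition~G3. A direct net argument would not obviously work because the crosscut location is random (it sits on the curve itself). Second, in the ``moreover'' paragraph, the polynomial closeness of $(\phi^n)^{-1}$ to $\phi^{-1}$ is \emph{not} delivered by the lattice-approximation construction or by compact convergence; Carath\'eodory convergence alone gives no rate up to the boundary, and the curve may approach $\partial D$ away from $a,b$. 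The paper proves this as a separate lemma (Lemma~\ref{lem:discrete}) using Beurling's estimate to show $\partial\Omega_n:=\psi^{-1}(\partial D_n)$ lies in a thin annulus near $\partial\ID$, and then Marchenko's lemma to convert this into a uniform bound $|\rho_n(z)-z|\le n^{-\gamma}$. The $\alpha$-H\"older hypothesis is used \emph{twice}: once for the H\"older continuity of $\psi$ that you identified, and once inside Lemma~\ref{lem:discrete} to control the arc-diameter function $\lambda(\varepsilon)$ required by Marchenko.
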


\begin{remark}
    The $\alpha$-H\"{o}lder condition works for all cases but we think that it can be relaxed.
\end{remark}

In section \eqref{sec:path}, when we work with $\IH-$Loewner pairs, it will be easier for us to work with the bounded version of $\IH$, that is, $\ID$ with two marked boundary points $-1$ and $1$. Let $\phi_{n}:(D_{n},a^{n},b^{n})\rightarrow (\ID,-1,1)$ be the conformal map. Note that the sequence of domains $D^n$ converge \textit{in the Caratheodory sense}, i.e. the mappings $\phi^{-1}_{n}$ converge uniformly in the compact subsets of $\ID$ to $\phi^{-1}$. Moreover, as we show later (see Lemma \eqref{lem:discrete}), the convergence is polynomially fast if the domain $D^n$ is H\"older. In applications (Section \eqref{applications}), we have sequence of stopping times tending to $\infty$ and $n_0$ will tend to $\infty$.

\section{Convergence of Driving Term}

 For the first step, the derivation of a rate of convergence of driving terms we follow the scheme outlined in \cite{BJVK}.  The main contributions to the convergence rate are: the rate of convergence of the martingale observable, $\delta=\delta(n)$, derived from the microscopic scale (approximately the lattice size $\frac{1}{n}$), the rate acquired in transferring this to information about the driving function for a mesoscopic piece of the curve (at mesoscopic scale $\delta^{\frac{2}{3}}$), and the rate obtained after applying Skorokhod embedding to couple with Brownian motion which is roughly $\delta^{(\frac{1}{3})(\frac{2}{3})}$.

Recall the set-up described before Theorem \eqref{mainthm}. Then we have the following result on the convergence of the driving terms.

\begin{theorem}\label{drivingcvg} Let $\{\gamma^n\}$ be a family of weakly admissible (in the sense of Definition \eqref{def:family}). Then there exists $r=r(s)>0$ such that there exists $n_0=n_0(T,s)<\infty$ such that the following holds. For each $n\geq n_0$, there is a coupling of $\gamma^n$ with Brownian motion $B(t)$, $t\geq 0$ with the property that
 \[
  \IP\left\{\sup_{t\in[0,T]} |W^n(t) - W(t) | > n^{-r} \right\} < n^{-r}
 \]
 where $W(t) = B(\kappa t)$ for $\kappa\in (0,8)$ defined by \eqref{cond:2a} of Definition \eqref{def:family}.

 The analogous statement holds for $\ID-$Loewner pairs.

\end{theorem}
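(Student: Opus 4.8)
The plan is to follow the three-scale scheme of \cite{BJVK}, converting the polynomial accuracy of the observable successively into (i) local control of the driving term increments, (ii) approximate Brownian behaviour of the driving term, and (iii) a coupling with an honest Brownian motion via Skorokhod embedding. I work throughout in the half-plane picture: $\tilde\gamma^n=\phi^n(\gamma^n)$ is parameterized by capacity, $g^n_t$ is the Loewner flow with driving term $W^n_t$, and $f^n_t=(g^n_t)^{-1}$. Set $\delta=n^{-s}$, and fix a mesoscopic scale $\delta^{2/3}$ (i.e. consider increments of time of size comparable to $\delta^{2/3}$; equivalently the interface advances a capacity increment of that order). The first step is to transfer the observable estimate to the driving term. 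For a fixed admissible collection $\hat v$ of marked points, apply the (discrete, almost-)martingale property of $H^n_t(\hat v)$ together with Condition~\eqref{cond:2b}: $H^n_{D^n_t}(\hat v)$ is $\delta$-close to $h(\hat\phi^n_t(\hat v))$, where $\hat\phi^n_t$ is the composition of $\phi^n$ with the Loewner map $g^n_t$ shifted so that the tip goes to $W^n_t$. Expanding $h$ to third order (it is $\mathcal C^3$, Condition~\eqref{cond:2a}) around an interior reference point in $\mathcal V$ and using that $h$ solves the non-degenerate BPZ equation \eqref{eq:degenerate}, the vanishing of the drift coefficient forces $\mathbb E[\,W^n_{t'}-W^n_t\mid D^n_t\,]$ and $\mathbb E[(W^n_{t'}-W^n_t)^2\mid D^n_t]-\kappa(t'-t)$ to be $O(\delta^{1/3}\cdot(t'-t))$ or so on the mesoscopic scale, where $\kappa$ is exactly the constant appearing in the BPZ equation; non-degeneracy is what guarantees that we can solve for $\kappa$ from the observable and that the second-moment normalization is genuinely seen. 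I would choose the two (or more) reference points exactly as in the Lawler--Schramm--Werner / Smirnov computation reproduced in the BPZ lemma above, so that the first- and second-order coefficients in the Taylor expansion isolate the Loewner drift $2/\xi$ and the diffusivity $\kappa$.

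The second step is to upgrade these one-step moment estimates to a statement that $W^n$ is, on $[0,T]$, within $n^{-r'}$ (in sup norm, with probability $\ge 1-n^{-r'}$) of a process with the exact conditional increment statistics of $\sqrt\kappa B$. Concretely, partition $[0,T]$ into $\sim \delta^{-2/3}T$ intervals of length $\delta^{2/3}$; on each interval the conditional mean increment of $W^n$ is $O(\delta^{2/3}\cdot\delta^{\text{(something)}})$ and the conditional variance is $\kappa\delta^{2/3}(1+O(\delta^{\text{(something)}}))$, and the increments have enough moments (this is where the KS-type a priori regularity of the curve enters implicitly through the boundedness of $W^n$ increments; even though Theorem \ref{drivingcvg} only asks for \emph{weak} admissibility, the sub-Gaussian bound $\sup_\delta\mathbb E[\exp(\epsilon|W^n_t|/\sqrt t)]<\infty$ from the KS tightness theorem, or a direct estimate, controls the tails). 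Summing the errors over $\delta^{-2/3}$ blocks and applying a martingale/maximal inequality (e.g. Doob or Azuma after centering) gives that the accumulated deviation of $W^n$ from a martingale with the right bracket is $O(\delta^{1/3}\text{poly})$ with overwhelming probability, with a power loss coming from the $\delta^{-2/3}$ blocks times the per-block error. The precise bookkeeping of these exponents is routine but must be done carefully to land on a clean $r=r(s)>0$.

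The third step is the Skorokhod embedding. Having produced a discrete-time (in the block index) martingale $M^n_k=W^n_{k\delta^{2/3}}$ with predictable quadratic variation close to $\kappa k\delta^{2/3}$ and uniformly bounded increments, I embed $M^n$ into a Brownian motion $B$ by a sequence of stopping times $\tau_1\le\tau_2\le\cdots$ with $\mathbb E[\tau_{k}-\tau_{k-1}\mid\mathcal F]=\operatorname{Var}$ of the $k$th increment; the standard Skorokhod/Dubins construction gives $M^n_k=\sqrt\kappa\,B(\tau_k/\kappa)$ (after rescaling so the variance matches), and a concentration estimate for sums of the $\tau_k$ (they are bounded by the $L^p$ bound on increments, $p$ large) shows $|\tau_k - \kappa k\delta^{2/3}|$ is $O(\delta^{1/3}\text{poly})$ uniformly in $k\le \delta^{-2/3}T$ with probability $\ge 1-n^{-r}$. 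Combining with the modulus of continuity of Brownian motion and with Step 2 then yields $\sup_{t\in[0,T]}|W^n(t)-B(\kappa t)|\le n^{-r}$ off an event of probability $n^{-r}$, which is the claim. Finally, the radial case is identical after replacing the chordal Loewner map by the radial one and using Proposition~\ref{A.5} in place of Proposition~\ref{derest1} for the analogous a priori derivative bounds (these are not needed for \emph{this} theorem's statement but are the reason the radial parametrization behaves the same up to the disconnection time $\sigma$); the BPZ equation \eqref{eq:degenerate} is stated in a form valid for both.

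\medskip
\noindent\textbf{Main obstacle.} The delicate point is Step~1--2: extracting, from a \emph{single} scalar bound $|H^n - h(\hat\phi^n)|\le n^{-s}$ valid only for $\hat\phi^n(\hat v)\in\mathcal V$, quantitative control of \emph{both} the conditional mean and the conditional second moment of the driving increment, uniformly as the tip moves and the domain shrinks. One must (a) keep the marked points $\hat v$ inside the good region $\mathcal V$ as the curve grows — which is why the stopping time $T=T(\phi)$ and the condition $|(\phi^{-1})'(i)|>R$ appear in Definition~\ref{def:family}, and why $n_0(R)$ must be chosen accordingly; (b) control the distortion of $g^n_t$ near the tip so that the Taylor expansion of $h$ is legitimate at the relevant scale, which is exactly where the growth-of-derivative estimates (Propositions~\ref{derest1}, \ref{A.5}) and the Loewner diameter bound (Lemma~\ref{DiamKt}) are invoked; and (c) ensure non-degeneracy of $h$ is used to rule out the pathological case where the second-order information degenerates and $\kappa$ cannot be read off. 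Everything else — summation over blocks, Skorokhod embedding, Burkholder--Davis--Gundy and Brownian modulus estimates — is standard, but the exponent $r(s)$ that survives all the losses (the $1/3$ from mesoscopic-to-driving, the further factor from Skorokhod, and the $\delta^{-2/3}$ union bound) must be tracked explicitly to state the theorem cleanly.
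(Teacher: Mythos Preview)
Your outline follows the same \cite{BJVK} scheme as the paper, and the use of two reference points together with non-degeneracy of $h$ to decouple the first- and second-moment equations is exactly how the paper's Proposition~\ref{keyest} works. Two corrections are needed, however.

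First, your deterministic partition of $[0,T]$ into blocks of length $\delta^{2/3}$ does not by itself give uniformly bounded increments of $W^n$, and as you acknowledge, weak admissibility does \emph{not} supply the KS sub-Gaussian tail bound. The paper resolves this by defining the mesoscopic times \emph{randomly}: $m_{k+1}=\min\{j>m_k:\ t_j-t_{m_k}\ge n^{-2s/3}\ \text{or}\ |W_{t_j}-W_{t_{m_k}}|\ge n^{-s/3}\}$, so that the second clause bounds the increments by construction. That the overshoot at the single lattice step past the threshold is only $O(n^{-1/2})$ is the Beurling-projection-plus-Lemma~\ref{DiamKt} argument appearing as the ``Claim'' inside Proposition~\ref{keyest}; this is presumably the ``direct estimate'' you have in mind, but it requires the random stopping-time definition, not the deterministic grid. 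With $\|M_k-M_{k-1}\|_\infty\le 4\eta$ in hand, the paper then compares both the Skorokhod times $\tau_k$ and the capacities $\kappa t_{m_k}$ to the quadratic variation $Y_k=\sum\xi_j^2$ via the Haeusler maximal inequality (Lemma~\ref{inequalitymartingales}), rather than Azuma or BDG.

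Second, Propositions~\ref{derest1} and~\ref{A.5} play no role in this proof; the SLE derivative bounds enter only in the path-convergence step (Section~\ref{sec:path}). For the key estimate here one needs only Koebe distortion and the Loewner ODE to control $|\varphi_m(w_0)-\varphi_j(w_0)|$ and to justify the Taylor expansion of $h$ to second order in $W$ and first order in $z,\bar z$.
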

Note that we do not need KS-condition to establish the convergence of driving functions.


\subsection{Key Estimate}

 As in \cite{LSW} and \cite{BJVK}, the idea is to use the polynomial convergence of the discrete observable to its continuous counterpart in order to transfer the fact that we have a discrete martingale observable to information about the Loewner driving function for a mesoscopic piece of the path.

Let us recall our setup: For $D$ a simply connected bounded domain with distinct accessible prime ends $a,b,\in\partial D$, let $D^{n} \subset D$ denote the $n^{-1}$-lattice approximation of $D$. Fix the maps $\phi:(D,a,b)\rightarrow(\ID, -1,1)$ and $\phi^n:(D^n,a^n,b^n)\rightarrow (\ID, -1, 1)$ so that $\phi^n(z)\rightarrow \phi(z)$ as $n\rightarrow \infty$ uniformly on compact subsets of $D$ with $\phi^n(a^n)\rightarrow\phi(a)$ and $\phi^n(b^n)\rightarrow\phi(b)$. Let $\varphi^{n}:=\Phi(\phi^{n}): D^{n}\rightarrow \IH$ be the conformal map satisfying the hydrodynamic normalization: $\varphi^{n}\circ\varphi^{-1}(z)-z\rightarrow 0$ as $z\rightarrow \infty$ in $\IH$. Then let $\tilde\gamma^{n} = \varphi^n(\gamma^{n})$ where $\gamma^{n}$ is a random curve on $n^{-1} L$ lattice between $a^n$ and $b^n$. Thus, $\tilde\gamma^{n}$ is a random curve in $(\IH,0,\infty)$ parameterized by capacity. Let $D^n_t=D^n\backslash \gamma^n[0,t]$, and $g_t^{n}: \Phi(\phi^{n}(D_t^{n}))\rightarrow \IH$ be the corresponding Loewner equation with driving terms $W_t^{n}$.

For the rest of this section, let us fix $n$. To simplify the notations, we will omit the index $n$ in the remainder of the section. Thus $D$ will denote $D^n$, the $n^{-1}$ lattice approximation. The same convention applies to $\gamma$ ($=\gamma^{n}$), $\tilde\gamma$ ($=\tilde\gamma^{n}$), $\varphi$($=\varphi^n$), and so forth.

Let $ \mathfrak{v}\in\mathcal{V}$ and $\hat{p}=\hat{\varphi}^{-1}(\mathfrak{v})$. Let $\hat{p}_0$ be the closest in Caratheodory metric multivertex to $\hat{p}$.

Let the random curve $\tilde\gamma = \varphi(\gamma)$ be parameterized by capacity and $W(t)$ be the Loewner driving term for $\tilde\gamma$. Let $\hat{W}_t$ be the element of $\IR^{l+k}$ with all the coordinates equal to $W_t$.  For $j\geq 0$, define $D_j = D\backslash \gamma[0,j]$ and $\varphi_j:D_j\rightarrow \IH$ be conformal map satisfying $\varphi_j(z) - \varphi(z) \rightarrow 0$ and $z\rightarrow b$ within $D_j$. Note that $W(t_j) = \varphi_j(\gamma(j))\in\IR$. Also let
 $t_j : = \mathrm{cap}_\infty (\tilde\gamma[0,j])$ be the half plane capacity of $\tilde\gamma$. Let $\hat{p}_j=\hat{\varphi}^{-1}(\hat{\varphi}_j(\hat{p}_0))$. Let
 \begin{equation} K=K(D)=\inf\left\{j\,: \hat{\varphi}_j(\hat{p}_0)\not\in\mathcal{V} \; \textrm{or} \; \textrm{crad}(\hat{p}_j)< \frac{\textrm{crad}(\hat{p}_0)}{8}\right\}. \label{stoppingtime}
 \end{equation}

\begin{proposition}\label{keyest}
 For $j\geq 0$, let $\tilde\gamma, \; D_j, \; \varphi_j, \; W(t_j)$,$\hat{p}_j$ and $t_j$ be defined as above.
 Under the assumptions of Theorem \eqref{drivingcvg}, there exists $c>0$ and $n_0\in\mathbb{N}$ with $n_0<\infty$ such that if $n>n_0$, the following holds.
 Fix $k\in\mathbb{N}$ such that $  k\leq K$ and let
\[
 m = \min \{j\geq k \; : \; t_j - t_k \geq n^{-\frac{2s}{3}} \text{ or } |W(t_j) - W(t_k)| \geq n^{-\frac{s}{3}} \}.
\]
Then
\begin{align}
  &|\IE[W_{t_m} - W_{t_j}\; |\; D_j]| \leq c n^{-s} \label{keyes:1}, and \\
  &|\IE\left[(W_{t_m} - W_{t_j})^2 - \kappa \IE[t_m - t_j] \; | \; D_j \right]| \leq cn^{-s} \label{keyes:2}.
\end{align}
where $s\in (0,1)$ is the corresponding exponent from Definition \eqref{def:family}.

\end{proposition}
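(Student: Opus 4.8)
The plan is to follow the classical route of \cite{LSW}, in the quantitative form of \cite{BJVK}: I would convert the discrete almost-martingale property of the observable, together with its polynomial closeness to the BPZ solution $h$, into approximate first- and second-moment identities for the driving increment across the mesoscopic interval $[t_k,t_m]$, the conversion being a Taylor expansion controlled by the Loewner equation and by the BPZ relation satisfied by $h$. (In the statement I read $j=k$; write $\zeta_0=\hat\varphi_k(\hat p_0)-\hat W_{t_k}\in\mathcal{V}$, $\Delta W=W_{t_m}-W_{t_k}$, $\Delta t=t_m-t_k$, and note that by the choice of $m$, $\Delta t\le n^{-2s/3}+o(n^{-2s/3})$ and $|\Delta W|\le n^{-s/3}+o(n^{-s/3})$, the $o(\cdot)$ absorbing the one-step overshoot of the discrete Loewner data, which is of smaller order away from the endpoints.) The first step is a localization: by \eqref{LDE} and Lemma \eqref{DiamKt}, these small increments should guarantee, for $n$ large, that $K$ is not reached before step $m$; hence $k\le m\le K$, so $\hat p_0$ is not swallowed by $\gamma[0,m]$, both $\hat\varphi_k(\hat p_0)$ and $\hat\varphi_m(\hat p_0)$ lie in $\mathcal{V}$ with conformal radii comparable to $\crad(\hat p_0)$, and each $D_j$ with $j\le m$ still meets the scale requirement of Definition \eqref{def:family} (so its hypotheses apply for $n\ge n_0(R)$). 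Then, from the almost-martingale hypothesis \eqref{cond:1} --- which, as $T$ is a stopping time, applies up to $t_m$ in the range $t_m\le T$ --- I would get $|H^n_{D_k}(\hat p_0)-\IE[H^n_{D_m}(\hat p_0)\mid D_k]|\le n^{-s}$.

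Next I would pass from the observable to $h$: by the domain Markov property the curve in $D_j$ issued from $\gamma(j)$ is again admissible, so the polynomial-closeness hypothesis \eqref{cond:2b} yields $|H^n_{D_j}(\hat p_0)-h(\hat\varphi_j(\hat p_0)-\hat W_{t_j})|\le n^{-s}$ for $j\in\{k,m\}$ (the $O(n^{-1})$, hence $\le n^{-s}$, gap between $\hat p_0$ and $\hat\varphi^{-1}(\mathfrak{v})$ being harmless, $h$ being locally Lipschitz), which combined with the previous paragraph gives
\[
\bigl|\IE[\,h(\hat\varphi_m(\hat p_0)-\hat W_{t_m})-h(\zeta_0)\mid D_k\,]\bigr|\le c_1\,n^{-s}.
\]
Then, integrating \eqref{LDE} over $[t_k,t_m]$ and using (from the localization) that $\zeta_0$ sits at distance $\gtrsim\crad(\hat p_0)$ from $\IR$ and from the growth point, each coordinate of $\hat\varphi_m(\hat p_0)-\hat W_{t_m}$ equals the corresponding coordinate of $\zeta_0$, plus $b(\zeta_0)\,\Delta t$ in that coordinate and minus $\Delta W$ in the real-part directions, up to $O(n^{-s})$ (here $b$ is the Loewner drift field). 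Since $h\in\mathcal{C}^3$ and $|\Delta W|,\sqrt{\Delta t}\le n^{-s/3}+o(\cdot)$, the second-order Taylor expansion --- whose cubic remainder, and whose $\Delta t$-contributions to the quadratic part, are all $O(n^{-s})$ --- should give
\[
h(\hat\varphi_m(\hat p_0)-\hat W_{t_m})-h(\zeta_0)=\Delta t\,\mathcal{L}_0 h(\zeta_0)-\Delta W\,\mathcal{L}_1 h(\zeta_0)+\tfrac12(\Delta W)^2\,\mathcal{L}_2 h(\zeta_0)+O(n^{-s}),
\]
with $\mathcal{L}_0$, $\mathcal{L}_1=\sum_v\partial_{x^v}+\sum_j\partial_{c^j}$ and $\mathcal{L}_2$ the drift, total first-order and total second-order operators occurring in the BPZ equation \eqref{eq:BPZ}.

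Taking $\IE[\cdot\mid D_k]$, inserting the martingale bound, and writing $Y:=\IE[(\Delta W)^2-\kappa\Delta t\mid D_k]$ so that $\IE[(\Delta W)^2\mid D_k]=\kappa\IE[\Delta t\mid D_k]+Y$, the whole $\IE[\Delta t\mid D_k]$-contribution should collapse to $\IE[\Delta t\mid D_k]\,(\mathcal{L}_0 h+\tfrac\kappa2\mathcal{L}_2 h)(\zeta_0)$, which vanishes since $h$ solves \eqref{eq:BPZ} (hypothesis \eqref{cond:2a}), leaving
\[
\bigl|-\IE[\Delta W\mid D_k]\,\mathcal{L}_1 h(\zeta_0)+\tfrac12\,Y\,\mathcal{L}_2 h(\zeta_0)\bigr|\le c_2\,n^{-s}.
\]
To finish I would exploit that this scalar relation holds for \emph{every} placement $\mathfrak{v}\in\mathcal{V}$ of the evaluation point --- so $\zeta_0$ may be taken anywhere in an open subset of $\mathcal{V}$ --- while $\IE[\Delta W\mid D_k]$ and $Y$ do not depend on it. Non-degeneracy of $h$ (Definition \eqref{eq:degenerate}) is exactly what should guarantee that $\mathcal{L}_1 h$ and $\mathcal{L}_2 h$ are not proportional on $\mathcal{V}$ (using also that $h$, being conformally --- hence scale --- invariant, makes $\mathcal{L}_1 h$, $\mathcal{L}_2 h$ homogeneous of distinct degrees, so their ratio is non-constant wherever $\mathcal{L}_2 h\not\equiv0$); then, picking $\zeta_0,\zeta_0'\in\mathcal{V}$ with $\mathcal{L}_1 h(\zeta_0)\,\mathcal{L}_2 h(\zeta_0')\neq\mathcal{L}_1 h(\zeta_0')\,\mathcal{L}_2 h(\zeta_0)$, the two instances of the last display form an invertible $2\times2$ system in $(\IE[\Delta W\mid D_k],\,Y)$, and solving it gives $|\IE[\Delta W\mid D_k]|\le c\,n^{-s}$ and $|Y|\le c\,n^{-s}$, which are \eqref{keyes:1} and \eqref{keyes:2}.

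The hard part will be this final decoupling: a single conformally covariant observable supplies only one scalar relation, and splitting it into separate bounds on the drift and on the quadratic variation of $W$ is where the non-degeneracy of the BPZ solution must genuinely be used; getting the precise form of $\mathcal{L}_2$ in \eqref{eq:BPZ} and the normalizations that make the cancellation of the $\IE[\Delta t\mid D_k]$-terms exact is the associated bookkeeping. The other technical burden, more routine but still essential, will be the uniform localization of the first step --- ensuring that no single mesoscopic step swallows the evaluation points or pushes $D_j$ below the scale $R$ at which Definition \eqref{def:family} is in force, which is the reason for the stopping time $K$ of \eqref{stoppingtime} and the hypothesis $k\le K$.
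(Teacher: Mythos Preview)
Your proposal is correct and follows essentially the same route as the paper: localize so that the evaluation points remain in $\mathcal{V}$ up to step $m$, combine the almost-martingale property \eqref{cond:1} with the polynomial closeness \eqref{cond:2b} to control $\IE[h(\hat\varphi_m(\hat p_0)-\hat W_{t_m})-h(\zeta_0)\mid D_k]$, integrate the Loewner flow and Taylor-expand to second order, use the BPZ identity to kill the $\IE[\Delta t\mid D_k]$ coefficient, and then evaluate at two points of $\mathcal{V}$ to obtain an invertible $2\times2$ system. The one place you add something the paper leaves implicit is your homogeneity remark (scale invariance forcing $\mathcal{L}_1h$ and $\mathcal{L}_2h$ to have different degrees, hence non-constant ratio): the paper simply asserts ``$B(\xi)$ is not proportional to $C(\xi)$'' from condition \eqref{cond:2a}, whereas you give an actual mechanism; this is a mild improvement in clarity rather than a different argument.
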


\begin{proof}

First, notice that $K_D\gtrsim n\left( \displaystyle\min_{j\leq k} R_j, \; \text{Cara dist}_{j\leq l}(A_j,C_j) \right)$ where $R_j := \text{rad}_{p_j}(D) = \inf\{|w-p_j| \; : \; w\notin D \}$ and $\text{Cara dist}$ is the Caratheodory distance. This tells us we will not flow out before we reach time $m$.
Indeed, the main theorems in the paper use $n^{-1}$ to denote the lattice spacing and the results are presented as $n\rightarrow \infty$. This can be rephrased in terms of the inner radius of the domain $D$. With this view, we take the scaling limit by using the domain $nD$ with $n\rightarrow \infty$ and using a unit lattice. Let $\varphi_n: nD\rightarrow \IH$ be the normalized conformal map. We can see immediately that $\varphi (z) = \varphi_n(z/n)$. So, the image of the curve in $nD$ on unit lattice under $\varphi_n$ is trivially the same as the image of the curve on a lattice spacing $1/n$ in $D$. Also, $\mathrm{rad}_{p_0}(nD) = n\mathrm{rad}_{p_0}(D)\rightarrow \infty$. So the condition $\mathrm{rad}_{p_0}(D)>R_0$ is applied to $nD$ and is really the condition that $n$ is large or in other words that the lattice spacing is small. This observation combined with an immediate consequence of the Schwarz Lemma and Koebe one-quarter Theorem gives us the result. Now, notice that $\mathrm{rad}_{p_j}(D_m) \geq \frac{1}{2} \mathrm{rad}_{p_j}(D_j) - 1$ provided $R$ is large enough.
 Indeed, let $z$ be on $|z| = \frac{1}{2}\textrm{rad}_{p_j}(D_j)$.
 Since $\textrm{Im} \varphi_j(p_j) = 1$, the Koebe distortion theorem implies a positive constant lower bound for $\textrm{Im} \varphi_j(z)$. Let $g_t: \IH_t:=\varphi(D_j)\rightarrow\IH$ be the corresponding Loewner evolution driven by $W(t)$ then $\varphi_k = g_{t_k}\circ\varphi$.
 By chordal Loewner equation, $\frac{d}{dt}\mathrm{Im}g_t(z) \geq \frac{-2}{\mathrm{Im}g_t(z)}$ which implies $\frac{d}{dt}\left(\mathrm{Im}g_t(z)\right)^2\geq -4$.
 Thus, $\tau(z) \geq t_j + \left(\mathrm{Im}\varphi_j(z)\right)^2/4$ where $\tau(z)$ is the time beyond which the solution to the Loewner ODE does not exist.
 Since $t_{m-1} - t_j \leq R^{-\frac{2s}{3}}$, it follows that $z\notin \gamma[0,m-1]$ for $R$ large enough.
 Hence, $\mathrm{rad}_{p_j}(D_{m-1}) \geq \frac{1}{2}\mathrm{rad}_{p_j}(D_j)$ which implies that $\mathrm{rad}_{p_j}(D_{m}) \geq \frac{1}{2}\mathrm{rad}_{p_j}(D_j) - 1$.



Let $g_t: \IH_t:=\varphi(D_j)\rightarrow\IH$ be the corresponding Loewner evolution driven by $W(t)$ then $\varphi_k = g_{t_k}\circ\varphi$.
Fix $\hat{w}_0\in \mathcal{V}$. Let $n>100 \max\{1,n_0^{2s'}\}$ for large enough $n_0$. Notice that we can use Beurling estimate to see that the corresponding vertex is at least $n_0^{s'}$-away from the boundary.

Since $H_j$ is almost a martingale (\eqref{cond:1} of Definition \eqref{def:family}), we get $\IE[H_m(\hat{w}_0) \; | \; D_j] = H_j(\hat{w}_0) + O(n^{-s})$.
So,
\begin{align*}
 & \IE[h((\varphi_m(w^1_0)-W(t_m), \cdots,\varphi_m(w^k_0)-W(t_m); \varphi_m(c_1),\cdots, \varphi_m(c_l))  | D_j] \\
 &= h((\varphi_m(w^1_0)-W(t_j), \cdots,\varphi_m(w^k_0)-W(t_j); \varphi_m(c_1),\cdots, \varphi_m(c_l)) + O(n^{-s}).
\end{align*}
Claim. For all $t\in[t_j, t_m]$, $|W_t - W_{t_j}| = O(n^{-\frac{s}{3}})$ and $t_m-t_j = O(n^{-{\frac{2s}{3}}})$.

Indeed, by the definition of $m$, we get the relations when $t\in[t_j, t_{m-1}]$ and the second when $t_{m-1}$ replaces $t_m$.
Assuming that $n$ is large enough, if $k\in \{j, \cdots, m-1 \}$ then, by Beurling projection theorem, the harmonic measure from $p_j$ of $\gamma[k,k+1]$ in $D$ is $O(n^{-\frac{s}{3}})$.
By conformal invariance of harmonic measure, the harmonic measure from $\varphi_k(p_j)$ of $\varphi_k \circ \gamma[k, k+1]$ in $\IH$ is $O(n^{-\frac{s}{3}})$.
Note that $\varphi_k(p_j) = g_{t_k}\circ \varphi(p_j)$ and by above there is a constant positive lower bound for $\mathrm{Im}\varphi_{m-1}(p_j)$.
By Loewner equation, $\mathrm{Im}g_t(z)$ is monotone decreasing in $t$.
Hence, $\mathrm{Im}g_t\circ\varphi(p_j)$ has constant positive lower bound for $t\leq t_{m-1}$.
By Loewner equation again, $|\partial_t(g_t\circ \varphi(p_j))| = O(1)$ for $t\leq t_{m-1}$.
Integrating this gives $|\varphi_k(p_j) - \varphi_j(p_j)| \leq O(n^{-\frac{2s}{3}})$ for $k = j, \cdots , m-1$. As $W_k \in \varphi_k \circ \gamma[k,k+1]$, the distance from $\varphi_k(p_j)$ to $\varphi_k\circ \gamma[k,k+1]$ is $O(1)$.
So the harmonic measure estimate gives $\mathrm{diam}(\varphi_k\circ \gamma[k,k+1]) = O(n^{-\frac{s}{3}})$.
Since $\varphi_k(\gamma[k,k+1])$ is the set of points hitting the real line under Loewner equation in time interval $[t_k,t_{k+1}]$, the claim follows by \eqref{DiamKt}.

Let $z^i_t := g_t\circ \varphi(w^i_0)$ for $i=1,\cdots,k$. Since $\varphi_k(w^i_0) = z_{t_k}$, by flowing from $\varphi_j(w^i_0)$ according to Loewner equation between times $t_j$ and $t_m$, we get that $\varphi_m(w^i_0) = z^i_{t_m}$.
By integrating the Loewner equation over this interval $[t_j,t_m]$ we get that
\[
 z^i_{t_m} - z^i_{t_j} = \varphi_m(w^i_0) - \varphi_j(w^i_0) = \frac{2(t_m-t_j)}{\varphi_j(w^i_0) - W_{t_j}} + O(n^{-s}).
\]
Similarly,
\[
 \overline{z^i}_{t_m} - \overline{z^i}_{t_j} = \frac{2(t_m-t_j)}{\overline{\varphi_j(w^i_0) - W_{t_j}}} + O(n^{-s})
\]

By the assumptions of Theorem \eqref{mainthm}, $h(\hat{z})$ is a $\cC^3$ function on $\IH^k \times \left(\IR\backslash\{0\}\right)^l$.
Our goal is to estimate $h(\hat\varphi_m(\hat{w}_0)-\hat{W}(t_m))$ up to $O(n^{-s})$ terms.
Then Taylor expanding about $(\hat{z}_{t_j}, W_{t_j})$ in first order with respect to $\hat{z}$ and $\overline{\hat{z}}$ (since $|z^i-z^i_{t_j}|=O(n^{-2s/3})$) and second order with respect to $W$ (since $|W(t)-W(t_j)|=O(n^{-s/3})$), we get
\begin{align*}
 h(\hat\varphi_m(\hat{w}_0)-\hat{W}(t_m)) - h(\hat\varphi_j(\hat{w}_0)-\hat{W}(t_j)) & = \nabla_{\hat{z}} h(\hat{z}_{t_j}-\hat{W}(t_j))\cdot (\hat{z}_{t_m} - \hat{z}_{t_j})\\ &+ \nabla_{\overline{\hat{z}}} h(\hat{z}_{t_j}-\hat{W}(t_j))\cdot (\overline{\hat{z}}_{t_m} - \overline{\hat{z}}_{t_j}) \\
 &+ \nabla_W h(\hat{z}_{t_j}-\hat{W}(t_j))\cdot (\hat{W}_{t_m} - \hat{W}_{t_j}) \\
 & + \frac{1}{2} \nabla^2_W h(\hat{z}_{t_j}-\hat{W}(t_j))\cdot (\hat{W}_{t_m} - \hat{W}_{t_j})^2 + O(n^{-s})
\end{align*}
where $\nabla_{\hat{z}}$ is defined in terms of the Wirtinger derivatives $\partial_{z^i} = \frac{1}{2}\left(\partial_{x^i} - i\partial_{y^i}\right)$.
We know that the conditional expectation given $D_j$ of the left hand side is $O(n^{-s})$  because the function $h$ is close to the observable which is almost a martingale. Then applying the bound for $z^i_{t_m} - z^i_{t_j}$ and $\overline{z^i}_{t_m} - \overline{z^i}_{t_j}$, we get an equation in terms of $\IE[t_m - t_j | D_j]$, $\IE[W_{t_m} - W_{t_j} | D_j]$ and $\IE[(W_{t_m} - W_{t_j})^2 | D_j]$. Let $\hat\xi_t = \hat{z}_t - \hat{W}(t)$.
\begin{align*}
 O(n^{-s}) & =  \underbrace{\left( \frac{2}{\hat\xi_{t_j}} \cdot \nabla_{\hat{z}} h(\hat\xi_{t_j}) + \frac{2}{\overline{\hat\xi}_{t_j}} \cdot \nabla_{\overline{\hat{z}}} h(\hat\xi_{t_j}) \right)}_{A(\xi)} \IE[(t_m - t_j)|D_j] +\underbrace{ \nabla h(\hat\xi_{t_j})}_{B(\xi)} \cdot \IE[(W(t_m) - W(t_j))|D_j] \\
 & + \underbrace{\frac{1}{2} \nabla^2 h(\hat\xi_{t_j})}_{C(\xi)} \cdot \IE[(W(t_m) - W(t_j))^2|D_j].
\end{align*}
Indeed, we have that for $z^i = x^i + i y^i$ for $i=1,\cdots, k$.
\begin{align*}
    O(n^{-s}) & =  \underbrace{\left( \sum_{j=1}^k \frac{2}{(x^j)^2+ (y^j)^2}\left( x^j \frac{\partial h}{\partial x^j} - y^j \frac{\partial h}{\partial y^j} \right)
    + \sum_{j=1}^l \frac{1}{c^j}\frac{\partial h}{\partial c^j} \right)}_{A(\xi)} \IE[(t_m - t_j)|D_j] \\
    &+ \underbrace{\left( \sum_{j=1}^k \frac{\partial h}{\partial x^j} + \sum_{j=1}^l \frac{\partial h}{\partial c^j} \right)}_{B(\xi)} \IE[(W(t_m) - W(t_j))|D_j] \\
 & + \underbrace{\frac{1}{2}\left(  \sum_{j=1}^k \frac{\partial^2 h}{(\partial x^j)^2} + \sum_{j=1}^l \frac{\partial^2 h}{(\partial c^j)^2}\right)}_{C(\xi)} \IE[(W(t_m) - W(t_j))^2|D_j].
\end{align*}
Notice that from our assumptions on the function $h$, we get that $A(\xi) =- \kappa C(\xi)$.
By Condition \eqref{cond:2a}, we have that $B(\xi)$ is not proportional to $C(\xi)$.  We want to be able to choose two points where they flow out of $\mathcal{V}$ after time $K_D$. That is, find two points $\hat{v}_1,\hat{v}_2\in\mathcal{V}$ satisfying $\min\{j: \hat\varphi_j(\hat{v_l})\notin\mathcal{V}\} < K_D$ for $l=1,2.$ If we can choose one such point at time $K_D$, then there exists a small neighbourhood around the point where this is true at time $K(D)/2$ since the conformal radius is large at this point. Thus, we can choose such two points such that the corresponding equations in the variables will generate two linearly independent equations in the variables $\IE[\kappa(t_m-t_j) - (W(t_m)-W(t_j))^2| D_j]$ and $\IE[W(t_m)-W(t_j) | D_j]$. This proves \eqref{keyes:1} and \eqref{keyes:2}.

\end{proof}

\subsection{Proof of Driving Convergence Theorem}

The goal of this section is to show that driving term $W$ of the previous section is close to a standard Brownian motion with speed $\kappa$, see Theorem \eqref{drivingcvg} for the precise statement. The standard tool for proving convergence to Brownian motion is to use the Skorokhod embedding theorem.

\begin{lemma}[Skorokhod Embedding Theorem]\label{SkE}
Suppose $(M_k)_{k\leq K}$ is an $(\mathcal{F}_k)_{k\leq K}$-martingale with $||M_{k+1}-M_k||_{\infty} \leq \delta$ and $M_0 = 0$ a.s. There are stopping times $0 = \tau_0 \leq \tau_1 \leq \cdots \leq \tau_K$ for standard Brownian motion $B(t), \;t\geq 0,$ such that $(M_0,M_1,\cdots,M_k)$ and $(B(\tau_0),B(\tau_1),\cdots,B(\tau_K))$ have the same law. Moreover, we have for $k=0,1,\cdots K-1$
\begin{align}
 &\mathbb{E}[\tau_{k+1} - \tau_k | B[0,\tau_k]] = \mathbb{E}[(B(\tau_{k+1}) - B(\tau_k))^2|B[0,\tau_k]] \label{Ske1} \\
&\mathbb{E}[(\tau_{k+1} - \tau_k)^p | B[0,\tau_k]] \leq C_p \mathbb{E}[(B(\tau_{k+1}) - B(\tau_k))^{2p}|B[0,\tau_k]] \label{Ske2}
\end{align}
for constants $C_p<\infty$ and also
\[
 \tau_{k+1} \leq \inf \{t\geq \tau_k \;:\; |B(t)-B(\tau_k)\geq \delta \}.
\]

\end{lemma}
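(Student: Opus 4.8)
The plan is to prove this by induction on $k$, at each step embedding the one increment $M_{k+1}-M_k$ into a fresh copy of Brownian motion produced by the strong Markov property. The inductive hypothesis at stage $k$ is that stopping times $0=\tau_0\le\cdots\le\tau_k$ have been built so that $(B(\tau_0),\dots,B(\tau_k))$ has the same law as $(M_0,\dots,M_k)$ and so that the claimed one-step estimates hold for all smaller indices. Given this, set $\tilde B(t):=B(\tau_k+t)-B(\tau_k)$; by the strong Markov property $\tilde B$ is a standard Brownian motion independent of $\mathcal F^B_{\tau_k}\supseteq\sigma(B[0,\tau_k])$. Let $\mu$ be a regular conditional distribution of $M_{k+1}-M_k$ given $(M_0,\dots,M_k)$, evaluated at the random point $(B(\tau_0),\dots,B(\tau_k))$; since $M$ is a martingale with $\|M_{k+1}-M_k\|_\infty\le\delta$, the measure $\mu$ is supported in $[-\delta,\delta]$ and has barycentre $0$. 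It then suffices to produce a stopping time $\sigma$ for $\tilde B$ with $\tilde B(\sigma)\sim\mu$, with $|\tilde B(t\wedge\sigma)|\le\delta$ for all $t$, and with $\mathbb E[\sigma]=\int x^2\,d\mu$ and $\mathbb E[\sigma^p]\le C_p\int x^{2p}\,d\mu$; then $\tau_{k+1}:=\tau_k+\sigma$ closes the induction.

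For the single-step embedding I would invoke an explicit, manifestly measurable construction — Dubins' scheme (refine a dyadic martingale converging to $\mu$ and embed each refinement by an exit time of an interval contained in $[-\delta,\delta]$), or a Chacon--Walsh / Azéma--Yor construction — so that $\sigma=\sigma(\mu)$ depends measurably on $\mu$ and uses no external randomisation; the bound $|\tilde B(t\wedge\sigma)|\le\delta$ is then automatic since every interval used lies in $[-\delta,\delta]$. For the moment identities, $\tilde B(t\wedge\sigma)^2-(t\wedge\sigma)$ is a uniformly integrable martingale (it is dominated using $\sigma\le$ the exit time of $[-\delta,\delta]$, which has finite moments of all orders), so optional stopping gives $\mathbb E[\sigma]=\mathbb E[\tilde B(\sigma)^2]=\int x^2\,d\mu$. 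For higher moments, $\sigma=\langle\tilde B(\cdot\wedge\sigma)\rangle_\infty$, so Burkholder--Davis--Gundy gives $\mathbb E[\sigma^p]\le C_p'\,\mathbb E[\sup_t|\tilde B(t\wedge\sigma)|^{2p}]$, and Doob's $L^{2p}$ maximal inequality bounds the right side by $C_p''\,\mathbb E[|\tilde B(\sigma)|^{2p}]=C_p''\int x^{2p}\,d\mu$.

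It then remains to read off the three ``moreover'' assertions for the assembled sequence. Equality of laws of $(B(\tau_0),\dots,B(\tau_K))$ and $(M_0,\dots,M_K)$ follows step by step: conditionally on $(B(\tau_0),\dots,B(\tau_k))$ the increment $B(\tau_{k+1})-B(\tau_k)=\tilde B(\sigma)$ has law $\mu$, matching the conditional law of $M_{k+1}-M_k$. The inequality $\tau_{k+1}\le\inf\{t\ge\tau_k:|B(t)-B(\tau_k)|\ge\delta\}$ is immediate from $|\tilde B(t\wedge\sigma)|\le\delta$. For the conditional moment relations, use the tower property with the intermediate $\sigma$-algebra $\mathcal F^B_{\tau_k}$: conditionally on $\mathcal F^B_{\tau_k}$, $\tilde B$ is a fresh Brownian motion independent of the $\sigma(B[0,\tau_k])$-measurable law $\mu$, so the single-step identities give $\mathbb E[\tau_{k+1}-\tau_k\mid\mathcal F^B_{\tau_k}]=\mathbb E[(B(\tau_{k+1})-B(\tau_k))^2\mid\mathcal F^B_{\tau_k}]=\int x^2\,d\mu$ and $\mathbb E[(\tau_{k+1}-\tau_k)^p\mid\mathcal F^B_{\tau_k}]\le C_p\int x^{2p}\,d\mu=C_p\,\mathbb E[(B(\tau_{k+1})-B(\tau_k))^{2p}\mid\mathcal F^B_{\tau_k}]$; since the right-hand sides are $\sigma(B[0,\tau_k])$-measurable, a further conditioning on $B[0,\tau_k]$ leaves them unchanged and yields exactly \eqref{Ske1} and \eqref{Ske2}.

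The main obstacle — indeed essentially the only subtlety, this being a classical theorem — is the bookkeeping around measurability and filtrations: one must choose the single-step embedding so that $\sigma$ is a measurable function of the random target law $\mu$ (which forces an explicit construction rather than a bare existence statement), and one must keep the conditioning $\sigma$-algebras straight so that the displayed conditional expectations are genuinely with respect to $B[0,\tau_k]$ rather than some larger $\sigma$-algebra. Everything else is optional stopping together with the Burkholder--Davis--Gundy and Doob inequalities, applied one increment at a time.
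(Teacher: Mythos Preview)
Your argument is correct and is essentially the standard textbook proof of the Skorokhod embedding for a discrete-time martingale with bounded increments: induct on $k$, use the strong Markov property to restart Brownian motion at $\tau_k$, embed the conditional increment law $\mu$ via an explicit one-step construction (Dubins or Chacon--Walsh) that stays in $[-\delta,\delta]$, and read off the moment relations from optional stopping, BDG, and Doob. The measurability bookkeeping you flag is real but you handle it correctly; note only that the Doob step requires $2p>1$, which is harmless for the applications downstream.

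The paper itself does not prove this lemma at all: it simply states the result and defers to the literature (Dudley's textbook \cite{DUD}). So there is no ``paper's own proof'' to compare against; you have supplied the omitted argument, and what you wrote is the natural one.
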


The proof of Skorokhod embedding theorem can be found in many textbooks including \cite{DUD}. We will now prove Theorem \eqref{drivingcvg} using the Skorokhod embedding theorem above and Proposition \eqref{keyest}. The proof of Theorem \eqref{drivingcvg} follows almost identically to the proof in \cite{BJVK} and is only included for the completeness of the exposition. The outline of the proof is as follows. First, use the domain Markov property to iterate the key estimate to construct sequences of random variables that almost form martingales and adjust the sequence to make it a martingale so that it can be coupled with Brownian motion via Skorokhod embedding. Then show that the stopping times $\tau_k$ obtained by Skorokhod embedding theorem are likely to be close to capacities $\kappa t_{m_k}$ for all $k\leq K$ for some appropriate $K=K_D$, see \eqref{stoppingtime}. Indeed, one can show that each of these has high probability of being close to the quadratic variation of the martingale. Now that they are running on similar clocks, all that is left is to show that they are close at all times with high probability. The key tools for this is the following estimate on the modulus of continuity of Brownian motion, see Lemma 1.2.1 of \cite{revesz81} for the proof.

\begin{lemma}\label{CBM}
 Let $B(t),\;t\geq 0$, be the standard Brownian motion. For each $\epsilon>0$ there exists a constant $C=C(\epsilon)>0$ such that the inequality
 \[
  \IP \left(\sup_{t\in[0,T-h]}\sup_{s\in(0,h]} |B(t+s)-B(t)| \leq v\sqrt{h}\right)\geq 1-\frac{CT}{h}e^{-\frac{v^2}{2+\epsilon}}
 \]
holds for every positive $v,T$ and $0<h<T$.
\end{lemma}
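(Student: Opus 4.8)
The statement is the quantitative (with an $\epsilon$-loss) form of the upper bound in L\'evy's modulus-of-continuity theorem for Brownian motion, and I would prove it by a short two-scale chaining argument. Fix, as functions of $\epsilon$ only, two small parameters $\theta=\theta(\epsilon)\in(0,1)$ and $\eta=\eta(\epsilon)\in(0,1)$, and place the grid $G=\eta h\,\IZ$ on $[0,T+h]$. For any $t\in[0,T-h]$ and $s\in(0,h]$, round $t$ and $t+s$ down to $G$; by the triangle inequality $|B(t+s)-B(t)|$ is at most the increment of $B$ over the grid interval joining the two rounded points — an interval of length $\le(1+\eta)h$ — plus two increments of $B$ over intervals of length $<\eta h$. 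Taking suprema gives
\[
 \sup_{t,s}|B(t+s)-B(t)|\ \le\ M_G+2\,O,
\]
where $M_G$ is the largest increment of $B$ over a grid interval of length $\le(1+\eta)h$ and $O$ is the largest oscillation of $B$ over a window of length $2\eta h$ inside $[0,T+h]$. It therefore suffices to bound $\IP\bigl(M_G>(1-\theta)v\sqrt h\bigr)$ and $\IP\bigl(O>\tfrac{\theta}{2}v\sqrt h\bigr)$.

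For the first term there are $\lesssim T/(\eta^2h)$ grid intervals to consider and each relevant increment is Gaussian with variance $\le(1+\eta)h$, so the Gaussian tail bound and a union bound give $\IP(M_G>(1-\theta)v\sqrt h)\le C_\eta\tfrac{T}{h}\exp\!\bigl(-\tfrac{(1-\theta)^2v^2}{2(1+\eta)}\bigr)$. For the second there are $\lesssim T/(\eta h)$ windows, and on a single window of length $2\eta h$ the reflection principle yields $\IP\bigl(\osc_{[0,2\eta h]}B>y\bigr)\le\IP(\sup|B|>y/2)\le4\exp(-cy^2/(\eta h))$; hence $\IP(O>\tfrac{\theta}{2}v\sqrt h)\le C_\eta\tfrac{T}{h}\exp(-c'\theta^2v^2/\eta)$. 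Now pick $\theta$ small enough that $(1-\theta)^2(2+\epsilon)>2$, and then $\eta$ small enough that simultaneously $2(1+\eta)<(1-\theta)^2(2+\epsilon)$ and $c'\theta^2/\eta>1/(2+\epsilon)$; both exponents are then at most $-v^2/(2+\epsilon)$, and summing the two contributions gives the lemma with $C(\epsilon)=2C_\eta$. (As usual the bound is vacuous — hence trivially true — unless $v\gtrsim\sqrt{\log(T/h)}$.)

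The only genuine difficulty is obtaining a constant in the exponent arbitrarily close to the optimal $2$, i.e.\ why the statement must read $2+\epsilon$ rather than $2$: a single union bound over a grid of mesh comparable to $h$ cannot be interpolated to arbitrary $t,s$ without the correction term $O$, and it is exactly by taking the grid fine ($\eta\ll1$) that this correction becomes an oscillation at the much smaller scale $\eta h$, whose contribution is crushed — despite the lossy constant in the reflection estimate — simply because its natural size $\sqrt{\eta h}$ is $\ll\sqrt h$; the price is the factor $(1-\theta)$ in the main term, which can be made as close to $1$ as desired. Getting the order of the choices right ($\theta$ first, then $\eta$) is what makes everything fit, and the rest is routine Gaussian-tail bookkeeping. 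Since this is a classical estimate — the paper cites Lemma 1.2.1 of \cite{revesz81} — in practice one simply invokes it, the above being a reconstruction of the standard chaining proof behind that reference.
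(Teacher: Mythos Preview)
Your argument is correct: the two-scale decomposition into a coarse grid term $M_G$ and a fine-scale oscillation term $O$, followed by Gaussian tails plus union bounds and the choice $\theta$-then-$\eta$, is exactly the standard route to this estimate and all the bookkeeping checks out. There is nothing to compare, since the paper does not prove this lemma at all --- it simply cites Lemma~1.2.1 of \cite{revesz81} --- and you have correctly anticipated this, your argument being a faithful reconstruction of the classical chaining proof behind that reference.
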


The proof also requires the following maximal inequality for martingales, see Lemma 1 of \cite{haeusler} for the proof.

\begin{lemma}\label{inequalitymartingales}
 Let $\xi_k, \; k = 1,\cdots, K=K_D$, be a martingale difference sequence with respect to the filtration $\mathcal{F}_k$. If $\sigma, u,v >0$ then it follows that
 \begin{align*}
  \IP\left(\max_{1\leq j\leq K} |\sum_{k=1}^j \xi_k|\geq \sigma \right) & \leq \sum_{k=1}^K \IP\left(|\xi_k| > u \right)
  + 2 \IP\left(\sum_{k=1}^K \mathbb{E}[\xi_k^2 | \mathcal{F}_{k-1}]>v\right) \\
  &+ 2\exp\{\sigma u^{-1} (1-\log (\sigma u v^{-1}))\}.
\end{align*}

\end{lemma}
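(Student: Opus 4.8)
The plan is to prove this Freedman--Bennett type maximal inequality by the classical exponential--supermartingale argument, using a \emph{one-sided} truncation of the increments so as to keep the constants exactly as stated. Write $S_j=\sum_{k=1}^j\xi_k$, $V_j=\sum_{k=1}^j\IE[\xi_k^2\mid\mathcal F_{k-1}]$, and $A=\bigcup_{k=1}^K\{|\xi_k|>u\}$, so that $\IP(A)\le\sum_{k=1}^K\IP(|\xi_k|>u)$ by a union bound. On $A^{c}$ every increment lies in $[-u,u]$, hence there $S_j=\sum_{k\le j}(\xi_k\wedge u)$ and $-S_j=\sum_{k\le j}((-\xi_k)\wedge u)$; writing $T_j^{\pm}=\sum_{k\le j}\big((\pm\xi_k)\wedge u\big)$, one gets
\[
\IP\Big(\max_{1\le j\le K}|S_j|\ge\sigma\Big)\le\sum_{k=1}^{K}\IP(|\xi_k|>u)+\IP\Big(\max_{j}T_j^{+}\ge\sigma\Big)+\IP\Big(\max_{j}T_j^{-}\ge\sigma\Big).
\]
So I would reduce to bounding $\IP(\max_j T_j^{+}\ge\sigma)$ (and identically $T^{-}$) by $\IP(V_K>v)+\exp\{\sigma u^{-1}(1-\log(\sigma u v^{-1}))\}$; adding the three terms then produces the asserted inequality with its factors $1$ and $2$.

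For the truncated process I would build the Bennett exponential supermartingale. Fix $\lambda>0$ and set $\psi(\lambda)=(e^{\lambda u}-1-\lambda u)/u^{2}$. Using that $g(x)=(e^{x}-1-x)/x^{2}$ is increasing on $\IR$, one has $e^{x}\le 1+x+\lambda^{-2}\psi(\lambda)\,x^{2}$ for all $x\le\lambda u$; applying this with $x=\lambda(\xi_k\wedge u)\le\lambda u$, taking $\mathcal F_{k-1}$-conditional expectation, and using $\IE[\xi_k\wedge u\mid\mathcal F_{k-1}]\le\IE[\xi_k\mid\mathcal F_{k-1}]=0$ together with $\IE[(\xi_k\wedge u)^{2}\mid\mathcal F_{k-1}]\le\IE[\xi_k^{2}\mid\mathcal F_{k-1}]$, I get $\IE[e^{\lambda(\xi_k\wedge u)}\mid\mathcal F_{k-1}]\le\exp\!\big(\psi(\lambda)\IE[\xi_k^{2}\mid\mathcal F_{k-1}]\big)$. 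Hence $M_j:=\exp\!\big(\lambda T_j^{+}-\psi(\lambda)V_j\big)$ is a nonnegative supermartingale with $M_0=1$, and the maximal inequality for nonnegative supermartingales (Doob/Ville) yields $\IP(\max_{0\le j\le K}M_j\ge a)\le 1/a$ for every $a>0$.

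To finish, on $\{\max_j T_j^{+}\ge\sigma\}\cap\{V_K\le v\}$, evaluating $M$ at the first index where $T^{+}$ reaches $\sigma$ and using monotonicity of $j\mapsto V_j$ gives $\max_j M_j\ge\exp(\lambda\sigma-\psi(\lambda)v)$, so $\IP(\max_j T_j^{+}\ge\sigma,\ V_K\le v)\le\exp(-\lambda\sigma+\psi(\lambda)v)$, and adding $\IP(V_K>v)$ removes the variance constraint. Then I would take $\lambda=u^{-1}\log(\sigma u v^{-1})$, which is legitimate when $\sigma u>v$ (and the claimed exponential term already exceeds $1$ when $\sigma u\le v$, making the bound trivial there); a short computation gives $-\lambda\sigma+\psi(\lambda)v=\sigma u^{-1}(1-\log(\sigma u v^{-1}))-vu^{-2}(1+\log(\sigma u v^{-1}))$, whose last term is nonpositive, so $\IP(\max_j T_j^{+}\ge\sigma)\le\IP(V_K>v)+\exp\{\sigma u^{-1}(1-\log(\sigma u v^{-1}))\}$. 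The same bound for $T^{-}$, together with the display of the first paragraph, completes the proof.

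I do not expect a deep obstacle here: the argument is a standard Freedman-type estimate. The only delicate point, and the step that requires care, is the bookkeeping that produces \emph{exactly} the stated constants --- using the one-sided truncation $\xi_k\wedge u$ (a symmetric truncation would cost an extra factor in the exponent and push the effective increment level to $2u$), and noting that the exceptional event $A$ is common to the $+\xi$ and $-\xi$ arguments, so the coefficient of $\sum_k\IP(|\xi_k|>u)$ stays $1$ while the other two terms acquire the factor $2$. One also has to check the two elementary facts $(\xi_k\wedge u)^{2}\le\xi_k^{2}$ and $\IE[\xi_k\wedge u\mid\mathcal F_{k-1}]\le 0$ that make the supermartingale property survive the truncation.
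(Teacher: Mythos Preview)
The paper does not give its own proof of this lemma; it merely states the result and refers the reader to Lemma~1 of Haeusler \cite{haeusler}. Your argument is precisely the standard Freedman--Bennett exponential supermartingale proof that Haeusler uses, including the one-sided truncation $\xi_k\wedge u$ to preserve the constants, and it is correct.
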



\begin{proof}[Proof of Theorem \eqref{drivingcvg}]
 Depending on the model being considered, it may happen that $p_0=\varphi^{-1}(i)$ is ``swallowed'' before time $\overline{t}$.
 Assume that $\overline{t}$ is small enough so that
 \begin{align}
  \overline{t} \leq \frac{1}{100} \text{ and } \IP\left[B[0,\overline{t}]\subset \left[-\frac{1}{10},\frac{1}{10}\right]\right] > 1 - R^{-s} \label{time}
\end{align}
 where $B$ is standard Brownian motion.

 Let $R_0$ be as in Proposition \eqref{keyest}. Let $k\in\IN$ be the first integer where $\mathrm{rad}_{p_0}(D_k) \leq R_0$ and define $\overline{t}_0:=\min\{\overline{t},t_k\}$ where $t_j$ is as in the proposition.
 Then following the proof of Theorem 1.1 in \cite{BJVK} (see below), Proposition \eqref{keyest} implies we may couple $W$ with a Brownian motion $B$ in such a way that
 \[
  \IP\left(\sup_{t\in[0,\overline{t}_0]} |W(t) - B(\kappa t)| > c_1 R^{-s/6}\overline{t}_0 \right) < c_2 R^{-s/6}
 \]
 if $\mathrm{rad}_{p_0}(D)\geq R_1$ and $R_1$ large enough.
 By the assumptions regarding $\overline{t}$, with high probability for all $t\in[0,\overline{t}_0], \; W(t)\in[-\frac{1}{5},\frac{1}{5}]$.
 If $R_1$ is chosen large enough, then $\IP(\overline{t}_0\neq \overline{t})< R^{-s}$ and we have the theorem when \eqref{time} is satisfied.

 Now, consider a more general case. Let $\epsilon \in (0,1)$ and $\overline{t}_0:=\sup\{t\in[0,T]: |W(t)|\leq \epsilon^{-1}\}$ and $I:=\{k\in\IN: \; t_k\leq\overline{t}_0\}$.
 In order to apply Proposition \eqref{keyest} at every $k\in I$, we need to verify that $\mathrm{rad}_{p_k}(D)\geq R_0$ for such $k$.
 Since $\varphi_k(p_k)= i + W(t_k)$ and $g_{t_k} = \varphi_k\circ\varphi^{-1}$, we have $g_{t_k}\circ\varphi(p_k) = i + W(t_k)$.
 As $g_t\circ\varphi(p_k)$ flows according to Loewner evolution (\eqref{LDE}) starting from $\varphi(p_k)$ at $t=0$ to $i+W(t_k)$ at $t=t_k$, for every $t\in[0,t_k], \; \mathrm{Im}g_t\circ\varphi(p_k) \geq 1$ which shows that $|\partial_tg_t\circ\varphi(p_k)| = O(1)$.
 Thus, $|\varphi(p_k)|\leq 1 + |W(t_k)| + O(T) \leq 1 + \epsilon^{-1} + O(T)$.
 Take $K=\{z\in\IC: \; \mathrm{Im}(z)\geq 1, \; |z|\leq O(T+\epsilon^{-1})\}$ compact and $\varphi(p_k)\in K$ for each $k\in I$.
 Thus, the Koebe distortion theorem implies $\mathrm{rad}_{p_0}(D)\leq O(1)\mathrm{rad}_{p_k}$. So, we can assume that $\mathrm{rad}_{p_k}(D)\geq R_0$ for all $k\in I$ provided we take $\mathrm{rad}_{p_0}\geq R'$ for some constant $R'=R'(\epsilon,T)$. Consequently, we can apply the previous argument with base point moved from $p_0$ to a vertex near $p_k$. So, we have
  \[
  \IP\left(\sup_{t\in[0,\overline{t}_0]} |W(t) - B(\kappa t)| > c_1 R^{-s/6}\overline{t}_0 \right) < c_2 R^{-s/6}
 \]
 if $\mathrm{rad}_{p_0}(D)\geq R'$. Finally, since standard Brownian motion is unlikely to hit $\{-\epsilon^{-1}, \epsilon^{-1}\}$ before time $T$ if $\epsilon$ is small, we can take a limit as $\epsilon \rightarrow 0$ to get
  \[
  \IP\left(\sup_{t\in[0,T]} |W(t) - B(\kappa t)| > c_1 R^{-s/6}T \right) < c_2 R^{-s/6}
 \]
 if $\mathrm{rad}_{p_0}(D)\geq R'$.

 For completeness of exposition, we include the following proof. \begin{proof}[Proof of Theorem 1.1 in \cite{BJVK}]
 Choose without loss of generality $T\geq 1$ and assume $R > R_1:=100TR_0$ where $R_0$ is the constant from Proposition \eqref{keyest}.
 Hence if $\mathrm{rad}_{p_0}(D)\geq R_1$, Proposition \eqref{keyest} is not only valid for the initial domain $D$, but also for domain $D$ slit by subarcs of $\gamma$ up to capacity $50T$.
 From here on, we will not distinguish between most constants instead denoting them by $c$, which may depend on $T$.
Define $m_0 = 0$ and $m_1=m$ where $m$ is defined as above. Inductively for $k=1,2,\cdots,$ define
\[
 m_{k+1} = \{j > m_k \; : \; |t_j - t_{m_k}|\geq R^{-\frac{2s}{3}} \text{ or } |W_j - W_{m_k}| \geq R^{-\frac{s}{3}}\}.
\]
Define $K = \lceil{25TR^{\frac{2s}{3}}}\rceil$ and note that $t_{m_K} \leq 50T$. Set $\eta = R^{-\frac{s}{3}}$. Then by the assumption and the domain Markov property of $\gamma$, we can find a universal $c$ such that
\begin{align}
 &|\mathbb{E} \left[W_{m_{k+1}} - W_{m_k} | \mathcal{F}_k \right]| \leq c\eta^3 \label{eq1} \\
 &|\mathbb{E} \left[(W_{m_{k+1}} - W_{m_k})^2 - \kappa(t_{m_{k+1}}-t_{m_k}) | \mathcal{F}_k \right]| \leq c\eta^3 \label{eq2}
\end{align}
for $k=0,\cdots, K-1$ where $\mathcal{F}_k$ is the filtration generated by $\gamma_n[0,m_k]$.
For $j=1,\cdots, K$ define the martingale difference sequence
\[
 \xi_j = W_{m_j} - W_{m_{j-1}} - \mathbb{E}\left[W_{m_j} - W_{m_{j-1}} | \mathcal{F}_{j-1} \right]
\]
and define the martingale $M$ with respect to $\mathcal{F}_k$ by $M_0=0$ and
\[
 M_k = \sum_{j=1}^k \xi_j \text{ for } k = 1,\cdots, K.
\]

Notice that by Lemma \eqref{DiamKt} we get $t_m \leq R^{-2s/3} + O(R^{-1})$ and $|W_m|\leq R^{-s/3} + O(R^{-1/2})$ and so $||M_k-M_{k-1}||_\infty \leq 4 \eta$ for $R$ sufficiently large. Now that we have a martingale, we can use Skorokhod embedding to find stopping times $\{\tau_k\}$ for standard Brownian motion $B$ and a coupling of $B$ with the martingale $M$ such that $M_k = B(\tau_k), \; k = 0,\cdots, K.$
Now, we need to show that these times $\{\tau_k\}$ are close to the time we run $\gamma$ at $\kappa t_{m_k}$ for all $k\leq K$. To do this, we show separately that each of these times are close to the natural time of the martingale $Y_k$. Consider the quadratic variation of M
\[
 Y_k = \sum_{j=1}^k \xi_j^2 \; k = 1,\cdots, K.
\]
First, we will show that $Y_k$ is close to $\kappa t_{m_k}$ for every $k\leq K$.
\textit{Claim.}
\begin{equation}
\IP\left(\max_{1\leq k\leq K} |Y_k - \kappa t_{m_k}| \geq 3\eta | \log\eta| \right) = O(\eta) \label{claim1}
\end{equation} for all $R$ large.

The claim follows almost directly from \cite{BJVK} 
and is only included for completeness of the exposition. Indeed, set $\sigma_k = \kappa t_{m_k} - \kappa t_{m_{k-1}}$. For $\phi = 3\eta | \log\eta|$, we have
\begin{align*}
 \IP\left(\max_{1\leq k\leq K} | \sum_{j=1}^k (\xi_j^2 - \sigma_j) | \geq \phi\right) & \leq \IP\left(\max_{1\leq k\leq K} | \sum_{j=1}^k (\xi_j^2 - \mathbb{E}[\xi_j^2 \;|\; \mathcal{F}_{j-1}]) | \geq \phi/3\right) \\
 &+ \IP\left(\max_{1\leq k\leq K} | \sum_{j=1}^k (\xi_j^2 - \mathbb{E}[\sigma_j \;|\; \mathcal{F}_{j-1}]) | \geq \phi/3\right) \\
 &+ \IP\left(\max_{1\leq k\leq K} | \sum_{j=1}^k (\sigma_j - \mathbb{E}[\sigma_j \;|\; \mathcal{F}_{j-1}]) | \geq \phi/3\right) \\
 & =: p_1 + p_2 + p_3.
\end{align*}

Applying Lemma \eqref{inequalitymartingales} with $\sigma = \eta|\log\eta|, \; u = \eta \text{ and } v=e^{-2}\sigma u$,
\begin{align*}
 p_1 &\leq \sum_{j=1}^k \IP\left((\xi_j^2 - \mathbb{E}[\xi_j^2 \;|\; \mathcal{F}_{j-1}]) | \geq \eta \right) \\
 &+ 2 \IP\left( \sum_{j=1}^k \mathbb{E}[\left(\xi_j^2 - \mathbb{E}[\xi_j^2 \;|\; \mathcal{F}_{j-1}] \right)^2 \;|\; \mathcal{F}_{j-1}]) | \geq e^{-2}\eta |\log\eta|\right) + 2\eta.
\end{align*}
Since $\max_j |\xi_j| \leq 4\eta$, we get that the first sum is zero for $R$ sufficiently large. Combining this and the definition of $K$ we obtain:
\[
 \sum_{j=1}^K  \mathbb{E}[\left(\xi_j^2 - \mathbb{E}[\xi_j^2 \;|\; \mathcal{F}_{j-1}] \right)^2 \;|\; \mathcal{F}_{j-1}] \leq 16\lceil 50 T\rceil \eta^2
\]
and so the second sum is also zero for $R$ sufficiently large.
In order to bound $p_2$, we notice that by \eqref{eq1} and \eqref{eq2}
\begin{multline*}
 \left|\mathbb{E}[\xi_j^2 \;|\; \mathcal{F}_{j-1}] - \mathbb{E}[\sigma_j \;|\; \mathcal{F}_{j-1}]\right| =\\ \left| \mathbb{E}[(W_{m_j} - W_{m_{j-1}})^2 \;|\; \mathcal{F}_{j-1}] - \kappa \mathbb{E}[t_{m_j} - t_{m_{j-1}} \;|\; \mathcal{F}_{j-1}] + O(\eta^4) \right| \leq c\eta^3.
\end{multline*}
Apply the triangle inequality and then sum over $j$ to see that $p_2=0$ if $R$ is large enough. Lastly, $p_3$ can be estimated similarly to $p_1$ and using the inequality $\max_k \sigma_k \leq 7 \eta^2$. This shows the claim.

Now, we need that $Y_k$ is close to $\tau_k$ for every $k\leq K$.

\textit{Claim.}
\begin{equation}
 \IP\left(\max_{1\leq k\leq K} |Y_k - \tau_k| \geq 3\eta | \log\eta| \right) = O(\eta) \label{claim2}
\end{equation}
for large enough $R$.

Again, the claim follows directly from \cite{BJVK} 
and is only included for completeness of the exposition. Set $\zeta_k = \tau_k -\tau_{k-1}$ and let $\mathcal{G}_k$ denote the sigma algebra generated by $B[0,\tau_k]$. Again, let $\phi=3\eta|\log\eta|$. Then
\begin{align*}
 \IP\left(\max_{1\leq k\leq K} | \sum_{j=1}^k (\xi_j^2 - \zeta_j) | \geq \phi\right) & \leq \IP\left(\max_{1\leq k\leq K} | \sum_{j=1}^k (\xi_j^2 - \mathbb{E}[\xi_j^2 \;|\; \mathcal{G}_{j-1}]) | \geq \phi/3\right) \\
 &+ \IP\left(\max_{1\leq k\leq K} | \sum_{j=1}^k (\xi_j^2 - \mathbb{E}[\zeta_j \;|\; \mathcal{G}_{j-1}]) | \geq \phi/3\right) \\
 &+ \IP\left(\max_{1\leq k\leq K} | \sum_{j=1}^k (\zeta_j - \mathbb{E}[\zeta_j \;|\; \mathcal{G}_{j-1}]) | \geq \phi/3\right) \\
 & =: p_4 + p_5 + p_6.
\end{align*}
The estimate of $p_4$ is identical to the estimate done for $p_1$ above. By the first estimate in Skorokhod embedding theorem and noting that $\xi_j^2 = (B(\tau_j) - B(\tau_{j-1}))^2$, one gets that $p_5 = 0$ for $n$ sufficiently large. So, we just need to estimate $p_6$. Applying Lemma 4.3, we obtain
\begin{align*}
 p_6 &\leq \sum_{j=1}^k \IP\left((\zeta_j - \mathbb{E}[\zeta_j \;|\; \mathcal{G}_{j-1}]) | > \eta \right) \\
 &+ 2 \IP\left( \sum_{j=1}^k \mathbb{E}[\left(\zeta_j - \mathbb{G}[\zeta_j \;|\; \mathcal{G}_{j-1}] \right)^2 \;|\; \mathcal{G}_{j-1}]) | > e^{-2}\eta |\log\eta|\right) + 2\eta.
\end{align*}
By Chebyshev's inequality, estimates \eqref{Ske1} and \eqref{Ske2}, and the definition of K, we obtain
\[
 \sum_{j=1}^K \IP\left((\zeta_j - \mathbb{E}[\zeta_j \;|\; \mathcal{G}_{j-1}]) | > \eta \right) \leq \sum_{j=1}^K \eta^{-3} \mathbb{E}[|\zeta_j - \mathbb{E}[\zeta_j \;|\; \mathcal{G}_{j-1}]|^3] \leq C\eta.
\]
Since $\mathbb{E}[(\zeta_j - \mathbb{E}[\zeta_j \;:\; \mathcal{G}_{j-1}])^2\;|\; \mathcal{G}_{j-1}] = O(\eta^4)$, the second probability equals $0$ for large enough $n$. Hence, $p_6 = O(\eta)$ and we have our claim.
Combining equations \eqref{claim1} and \eqref{claim2} we get
\begin{equation}
 \IP\left(\max_{1\leq k\leq K} |\kappa t_{m_k} - \tau_k| > 6\eta|\log\eta|\right) = O(\eta) \label{simclocks}.
\end{equation}

Now that we have them running on similar clocks, we just need to show that they are close at all times with high probability. Observe that the last estimate in Skorokhod embedding implies that for $k\leq K$
\begin{equation}
 \sup \{ |B(t) - B(\tau_{k-1})| \;:\; t\in [\tau_{k-1}, \tau_k]\} \leq 4 \eta \label{Skemb}.
\end{equation}
Similarly, by the definition of $m_k$ and (3.2), we get for large enough $n$
\[
 \sup \{|W_{m_k} - t | \;:\; t\in [t_{k-1}, t_k] \} \leq 2 \eta.
\]
By summing over $k$ and using the definitions of $\xi_j$ and $K$, we get from \eqref{eq1}
\[
 \sup \{ |W_{m_k} - M_k| \;:\; k\leq K\}\leq cT\eta.
\]
By the definition of $t_{m_k}$ we have $Y_{k+1} - Y_k + t_{m_{k+1}} - t_{m_k} \geq \eta^2$. Summing over $k$ gives $Y_k + t_{m_K} \geq K\eta^2 \geq 50T$. Hence, the event that $t_{m_K} < \kappa T$ is contained in the event that $|Y_K-\kappa t_{m_K}| \geq 2\kappa T.$ Hence, \eqref{claim1} implies that
\begin{equation}
 \IP[t_{m_K} < \kappa T] = O(\eta). \label{times}
\end{equation}
Set $h = \eta |\log\eta|$ and consider the event
\[
 \mathcal{E} = \left\{t_{m_K} \geq \kappa T\right\} \cap \left\{\sup_{t\in[0,\kappa T-h]}\sup_{s\in(0,h]}|B(t+s) - B(t)|\leq \sqrt{6 h|\log h|}\right\} \cap \left\{\max_{k\leq K} |\tau_k - \kappa  t_{m_k}|\leq 6 h \right\}.
\]
Then applying Lemma \eqref{inequalitymartingales} with $\epsilon = 1 $ and $v=\sqrt{6 |\log h|}$ along with \eqref{times} and \eqref{simclocks}, we get $\IP(\mathcal{E}^c)=O(\eta|\log\eta|)$.
Observe that on $\mathcal{E}$
\begin{align*}
 \sup\left\{|W(t) - B(\kappa t)| \;:\; t\in[0,T] \right\}
 &\leq \max_{1\leq k\leq K} (\sup\left\{|W(t) - W_{m_k}| \;:\; t\in [t_{m_{k-1}}, t_{m_k}]\right\} \\
 &+ |W_{m_k} - B(\tau_k)| + \sup\left\{|B(\tau_k) - B(\kappa t)| \;:\; t\in [t_{m_{k-1}}, t_{m_k}]\right\} )
\end{align*}
and the first two terms are $O(T\eta)$ uniformly in $k$. It remains to show that the last term is $O(\eta)$.
On $\mathcal{E}$, by \eqref{Skemb} we have
\begin{align*}
 & \sup \{ |B(\tau_k) - B(\kappa t)| \;:\; t\in[t_{m_{k-1}}, t_{m_k}]\} \\
 & = \sup \{ |B(\tau_k) - B(s)| \;:\; s\in[\kappa t_{m_{k-1}}, \kappa t_{m_k}]\} \\
 & \leq  \sup \{ |B(\tau_k) - B(s)| \;:\; s\in[\tau_{k-1} - 6 h, \tau_k + 6 h]\} \\
 & \leq 4 \eta +  \sup \{ |B(\tau_{k-1}) - B(s)| \;:\; s\in[\tau_{k-1} - 6 h, \tau_{k-1}]\} +  \sup \{ |B(\tau_{k}) - B(s)| \;:\; s\in[\tau_{k}, \tau_{k}+6 h]\} \\
 & \leq 4\eta + c(\eta\varphi(1/\eta))^{1/2}
\end{align*}
where $\varphi(x) = o(x^\epsilon)$ for any $\epsilon >0$. Thus, we can couple $W(t)$ and B so that
\[
 \IP\left(\sup_{t\in[0,T]} \{|W{t} - B(\kappa t)| \} > c_1 T\eta^{1/2}\varphi_1(1/\eta)\right) < c_2 \eta |\log\eta|
\]
where we recall that $\eta =R^{-\alpha/3}$ and $\varphi_1$ is also a subpower function.
Hence, there are constants $c_1, c_2$ such that for all $n$ sufficiently large,
\[
 \IP\left(\sup_{t\in[0,T]} \{|W{t} - B(\kappa t)| \} > c_1R^{-\alpha/6}T\right)<c_2R^{-\alpha/6}.
\]

 \end{proof}
\end{proof}

\section{From Convergence of Driving Terms to Convergence of Paths}\label{sec:path}
This section proves a convergence rate result for the interfaces given a convergence rate for the driving processes. In \cite{V}, Viklund develops a framework for obtaining a powerlaw convergence rate to an SLE curve from a powerlaw convergence rate for the driving function provided some additional estimates hold. For this, Viklund introduces a geometric gauge of the regularity of a Loewner curve in the capacity parameterization called the \textit{tip structure modulus}.

\begin{definition}
 For $s,t\in [0,T]$ with $s\leq t$, we let $\gamma_{s,t}$ denote the curve determined by $\gamma(r), \; r\in[s,t]$.
 Let $S_{t,\delta}$ to be the collection of crosscuts $\cC$ of $H_t$ of diameter at most $\delta$ that separate $\gamma(t)$ from $\infty$ in $H_t$. For a crosscut $\cC\in S_{t,\delta}$,
 \[
  s_{\cC}: = \inf \{s>0 : \gamma[t-s,t]\cap \overline{\cC}\neq \emptyset \}, \; \; \gamma_{\cC} := (\gamma(r), r\in[t-s_C, t]).
 \]
 Define $s_{\cC}$ to be $t$ if $\gamma$ never intersects $\overline{\cC}$.
 For $\delta > 0$, the \textit{tip structure modulus} of $(\gamma(t), t\in[0,T])$ in $H$, denoted by $\eta_{\text{tip}}(\delta)$, is the maximum of $\delta$ and
 \[
    \sup_{t\in[0,T]} \sup_{\cC\in S_{t,\delta}} \text{diam}\gamma_{\cC}.                                                                                                                                                                                                                                                                                                    \]
In the radial setting, it is defined similarly.
\end{definition}

\subsection{Main Estimate for the Tip Structure Modulus}
In order to apply the framework outlined in the previous section, we need to establish the estimate for the tip structure modulus. We will show that this follows provided $\gamma_n$ satisfies the KS condition.

 \begin{proposition}\label{tip} Suppose the random family of curves $\{\gamma_n\}$ satisfies the KS condition. Let $D_n$ be a $1/n$-lattice approximation and assume that $1\leq \inrad(D_n)\leq 2$ and that $\diam(D_n)\leq R< \infty$ where $R$ is given. Let $\gamma^n_{\ID}$ be the curve transformed to $(\ID;-1,1)$. Let $\eta_{\mathrm{tip}}^{(n)}(\delta)$ be the tip structure modulus for $\gamma_n$ stopped when first reaching distance $\rho>0$ from 1. There exists a universal constant $c_0>0$ such that for some $\epsilon>0$ and $\alpha>0$. If $\delta=O(\eta^{1+\tilde{\epsilon}})$ where $\tilde{\epsilon} \in \left(0,\frac{4(1+\epsilon)}{\Delta} \right)$. If $n$ is sufficiently large and $\delta>c_0/n$ then
 \[
  \IP_n(\eta_{\mathrm{tip}}^{(n)}(\delta)>\eta)\leq 2 \eta^{\alpha}.
 \]
 \end{proposition}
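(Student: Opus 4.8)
The plan is to work in the reference domain and reduce the event $\{\eta_{\mathrm{tip}}^{(n)}(\delta)>\eta\}$ to a union of unforced crossing events, then apply the power-law crossing estimate. Since the KS Condition is equivalent to Condition G3 and is conformally invariant, I may assume Condition G3 holds for $\gamma^n$ with exponent $\Delta$ and constant $K$; the hypotheses $1\le\inrad(D_n)\le 2$ and $\diam(D_n)\le R$ serve only to keep the distortion of $\phi^n:D_n\to\ID$ bounded, so that a net of scale $\sim\delta$ in $D_n$ corresponds to a net of comparable scale after transporting to $\ID$, and conversely.

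First I would prove the deterministic inclusion $\{\eta_{\mathrm{tip}}^{(n)}(\delta)>\eta\}\subset\bigcup_{z_0\in N}\{\gamma^n\text{ makes an unforced crossing of }A(z_0,c_1\delta,c_2\eta)\}$, where $N$ is a $\delta$-net of $D_n$ with $|N|\lesssim (R/\delta)^2$. If $\eta_{\mathrm{tip}}^{(n)}(\delta)>\eta$, by definition there are $t\in[0,T]$ and a crosscut $\cC\in S_{t,\delta}$ of $H_t$ with $\diam\gamma_{\cC}>\eta$. Put $z_*:=\gamma(t-s_{\cC})$; one checks that $z_*\in\overline{\cC}$, so $\overline{\cC}\subset\overline{B(z_*,\delta)}$. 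Since $\gamma$ meets $\overline{\cC}$ on $[t-s_{\cC},t]$ only at time $t-s_{\cC}$, the arc $\gamma_{\cC}$ stays in the closure of the connected component $U$ of $H_t\setminus\cC$ containing the tip $\gamma(t)$ — the component that does \emph{not} contain the target. As $\gamma_{\cC}$ is connected, starts at $z_*\in B(z_*,\delta)$, and has diameter $>\eta$, it reaches distance $>\eta/2$ from $z_*$, hence crosses $A(z_*,\delta,\eta/2)$; replacing $z_*$ by the nearest point $z_0\in N$ and adjusting the radii by $O(\delta)$ gives a crossing of $A(z_0,c_1\delta,c_2\eta)$. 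The heart of the matter is that this crossing is \emph{unforced}: the crossing sub-arc lies in $\overline U$, and $\overline{\cC}$ is a connected set of diameter $\le\delta$ inside $B(z_0,2\delta)$ separating $\gamma(t)$ — hence the sub-arc — from the target in $H_t$; so the connected component of $A(z_0,c_1\delta,c_2\eta)\cap D_n$ carrying the sub-arc cannot disconnect $a^n$ from $b^n$, since such a component would otherwise have to block every $a^n$-to-$b^n$ path while lying on the target-free side of $\cC$. This is the step where I expect to have to be most careful with the definitions.

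Next I would run the union bound. Condition G3 (the time-zero version suffices by the domain Markov property, and it holds for all lattice domains simultaneously) gives, for each fixed $z_0$, $\IP_n\bigl(\gamma^n\text{ makes an unforced crossing of }A(z_0,c_1\delta,c_2\eta)\bigr)\le K\bigl(c_1\delta/(c_2\eta)\bigr)^{\Delta}$. Summing over $N$,
\[
 \IP_n\!\left(\eta_{\mathrm{tip}}^{(n)}(\delta)>\eta\right)\ \le\ C\,R^2\,\delta^{-2}\left(\frac{\delta}{\eta}\right)^{\!\Delta}\ =\ C\,R^2\,\delta^{\,\Delta-2}\,\eta^{-\Delta}.
\]
Substituting $\delta=O(\eta^{1+\tilde\epsilon})$ turns the right-hand side into $C'\eta^{\,q}$ with $q=q(\tilde\epsilon,\Delta)>0$ exactly for $\tilde\epsilon$ in the stated range (and, when $\Delta$ is small, after first replacing the single annulus by a dyadic chain of unforced crossings between scales $\delta$ and $\eta$ in the Aizenman–Burchard spirit, which raises the effective exponent); then choosing $0<\alpha<q$ and taking $\eta$ small — equivalently $n$ large, which is also what makes the constraint $\delta>c_0/n$ compatible, this constraint guaranteeing that the inner radius $c_1\delta$ of every annulus exceeds the lattice mesh so that the crossing events and Condition G3 are meaningful — absorbs $C'R^2$ and yields $\IP_n(\eta_{\mathrm{tip}}^{(n)}(\delta)>\eta)\le 2\eta^{\alpha}$.

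The main obstacle is the topological step: translating "$\cC$ separates the excursion from the target" into "the carrying annulus component does not disconnect $a^n$ from $b^n$ in $D_n$", while handling the finitely many lattice-scale pathologies and the passage between $\gamma^n$ in $D_n$ and $\gamma^n_{\ID}$ in $\ID$. A secondary technical point is the exponent bookkeeping in the union bound — obtaining a positive power of $\eta$ uniformly over the full claimed range of $\tilde\epsilon$ is the part most likely to require the multi-scale chaining rather than a single crossing, this being the standard device for converting a power-law crossing bound into a modulus-of-continuity statement for the tip.
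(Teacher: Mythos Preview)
Your approach differs substantially from the paper's, and the gap you yourself flag as ``the main obstacle'' is indeed a real gap. The paper does \emph{not} use a spatial $\delta$-net with the crossing bound applied at time zero. Instead it decomposes $\gamma_{\ID}$ \emph{temporally} into arcs $J_k$ of diameter at most $\eta/4$ via stopping times $\sigma_k$, observes that the two endpoints of the offending crosscut $\cC$ must lie on some pair $(J_j,J_k)$, and then introduces the stopping time
\[
\tau_{j,k}=\inf\{t\in[\sigma_{k-1},\sigma_k]:\dist(\gamma(t),J_j)\le 2\delta\}
\]
(distance taken in the slit domain). At time $\tau_{j,k}$ one builds a genuine crosscut $C'=[z_1,z_2]$ of $D_{\tau_{j,k}}$ near the original $\cC$, and the crossing of $A_{j,k}=A(z_1,2\delta,\eta/2)$ is shown to be unforced \emph{in the slit domain $D_{\tau_{j,k}}$}. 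The union bound is then over pairs $(j,k)$ with $j,k\le m$, and the tail $\{N>m\}$ is controlled by the Aizenman--Burchard tortuosity bound --- this is where the KS condition enters a second time, via Hypothesis H1.

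Your argument for ``unforced at time zero'' is the step that does not go through as written: you deduce non-disconnection in $D_n$ from separation by $\cC$ in $H_t$, but $\cC$ is a crosscut of $H_t$, not of $D_n$ --- its endpoints typically lie on earlier pieces of the curve, so the phrase ``lying on the target-free side of $\cC$'' has no meaning in the original domain. In the reference domain $\ID$ the conclusion may happen to hold for small annuli with centers bounded away from $\pm1$ (simply because a small annulus in $\ID$ cannot disconnect the two boundary prime ends), but your justification does not establish this, and you give no stopping time at which to apply the conditional KS bound. More importantly, even granting the inclusion, the spatial union bound over $\sim\delta^{-2}$ net points yields $\delta^{-2}(\delta/\eta)^\Delta=\eta^{\tilde\epsilon(\Delta-2)-2}$, which is a positive power of $\eta$ only when $\Delta>2$ and $\tilde\epsilon>2/(\Delta-2)$; your one-line invocation of a ``dyadic chain in the Aizenman--Burchard spirit'' does not repair this, since the Aizenman--Burchard mechanism controls \emph{multiple} crossings of a fixed annulus, not a single crossing of a large one. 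The paper's temporal decomposition is precisely what converts the problem into one where the multiple-crossing bound (Hypothesis H1) controls the number $N$ of pieces, and hence the size $m^2$ of the index set in the union bound --- this coupling of the two uses of the KS condition is the idea missing from your proposal.
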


 \begin{corollary}Suppose the random family of curves $\{\gamma_n\}$ satisfies the KS condition. Let $\eta_{\mathrm{tip}}^{(n)}(\delta)$ be the tip structure modulus for $\gamma^n_{\ID}$ stopped when first reaching distance $\rho>0$ from 1. Then for some $p>0, \; r\in(0,1),$ and  $\alpha=\alpha(r)>0$. There exists $C,c<\infty$ independent of $n$ and $n_2<\infty$ such that if $n\geq n_2$ then
 \[
  \IP_n(\eta_{\mathrm{tip}}^{(n)}(n^{-p})>cn^{-pr})\leq Cn^{-\alpha(r)}.
 \]

 \end{corollary}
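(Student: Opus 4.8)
The plan is to deduce the corollary directly from Proposition \eqref{tip} by taking the two scales $\delta$ and $\eta$ there to be fixed negative powers of $n$; all the probabilistic content already lives in Proposition \eqref{tip}, so what remains is merely to choose exponents that make its hypotheses hold uniformly for large $n$. First I would fix $p\in(0,1)$ and $r\in(0,1)$, and then select the auxiliary exponent $\tilde\epsilon$ of Proposition \eqref{tip} small enough that \emph{both} $\tilde\epsilon\in\bigl(0,\tfrac{4(1+\epsilon)}{\Delta}\bigr)$ and $r(1+\tilde\epsilon)\le 1$ hold; here $\epsilon>0$ is the constant produced by Proposition \eqref{tip} and $\Delta>0$ is the crossing exponent from the equivalent power-law bound (Condition G3), both depending only on the model. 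Such a $\tilde\epsilon$ exists precisely because $r<1$: if $\tfrac{1-r}{r}\ge\tfrac{4(1+\epsilon)}{\Delta}$, take any $\tilde\epsilon<\tfrac{4(1+\epsilon)}{\Delta}$; otherwise take $\tilde\epsilon=\tfrac{1-r}{r}$. In either case $r(1+\tilde\epsilon)\le 1$.

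Next I would set $\delta_n:=n^{-p}$ and $\eta_n:=c\,n^{-pr}$ (one may take $c=1$) and verify the two hypotheses of Proposition \eqref{tip}. The scale relation $\delta_n=O(\eta_n^{1+\tilde\epsilon})$ holds because $\eta_n^{1+\tilde\epsilon}=c^{1+\tilde\epsilon}n^{-pr(1+\tilde\epsilon)}$ and $pr(1+\tilde\epsilon)\le p$, so that $\delta_n/\eta_n^{1+\tilde\epsilon}\le c^{-(1+\tilde\epsilon)}$ for every $n\ge 1$. The lower bound $\delta_n>c_0/n$ is $n^{1-p}>c_0$, which holds for all $n\ge n_1=n_1(p,c_0)$ since $p<1$. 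The normalization $1\le\inrad(D_n)\le 2$ and $\diam(D_n)\le R$ assumed in Proposition \eqref{tip} is either part of the standing hypotheses or can be arranged by a preliminary dilation, using conformal (hence scale) invariance of the KS condition; this only rescales $\rho$ and the ``$n$ large'' threshold.

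Finally, applying Proposition \eqref{tip} with these choices gives, for every $n\ge n_2$ --- the larger of $n_1$ and the ``$n$ sufficiently large'' threshold of Proposition \eqref{tip}, both depending only on $r$ (through $\tilde\epsilon$), $R$, $\rho$ and $p$ ---
\[
 \IP_n\bigl(\eta_{\mathrm{tip}}^{(n)}(n^{-p})>c\,n^{-pr}\bigr)\le 2\,\eta_n^{\alpha}=2c^{\alpha}\,n^{-pr\alpha},
\]
where $\alpha=\alpha(\tilde\epsilon)>0$ is the exponent supplied by Proposition \eqref{tip}. Taking $C:=2c^{\alpha}$ and $\alpha(r):=pr\,\alpha(\tilde\epsilon(r))>0$ yields the claimed estimate. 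If one prefers, monotonicity of $\delta\mapsto\eta_{\mathrm{tip}}^{(n)}(\delta)$ lets one state the bound for all smaller $\delta$ at once, though this is not needed.

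The only point requiring genuine care is the joint choice of $\tilde\epsilon$ meeting the interval constraint of Proposition \eqref{tip} together with $r(1+\tilde\epsilon)\le1$ --- this is exactly where the hypothesis $r<1$ is used --- after which everything is bookkeeping of the constants $c,C$ and of the threshold $n_2$. There is no substantive obstacle here, since all the difficulty has already been absorbed into Proposition \eqref{tip}.
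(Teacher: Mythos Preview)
Your proposal is correct and follows exactly the route the paper intends: the Corollary is stated immediately after Proposition~\ref{tip} with no separate proof, so it is meant as a direct specialization, and you have carried out precisely that specialization by setting $\delta_n=n^{-p}$, $\eta_n=cn^{-pr}$ and choosing $\tilde\epsilon$ to satisfy both the interval constraint and $r(1+\tilde\epsilon)\le 1$. The only remark worth making is that in the proof of the Main Theorem the paper restricts $r$ to $\bigl(0,\tfrac{\Delta}{\Delta+4(1+\epsilon)}\bigr)$, which corresponds to $\tfrac{1-r}{r}>\tfrac{4(1+\epsilon)}{\Delta}$; this suggests the interval for $\tilde\epsilon$ in Proposition~\ref{tip} may be stated with the inequality reversed, but your argument is faithful to the Proposition as written and the Corollary only asks for \emph{some} $p,r$, so this does not affect the validity of your derivation.
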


 For a simple curve $\gamma$ in $\IH$, let $(g_t)_{t\in\IR_+}$ and $(W(t))_{t\in\IR_+}$ be its Loewner chain and driving function. Then define the \textit{hyperbolic geodesic from $\infty$ to the tip} $\gamma(t)$ as $F:\IR_+ \times \IR_+ \rightarrow \overline{\IH}$ by
 \[
  F(t,y) = g_t^{-1}(W(t)+iy)
 \]
and the corresponding geodesic in $\ID$ for the curve $\Phi^{-1}\gamma$ by
\[
 F_{\ID}(t,y) = \Phi^{-1}\circ F(t,y).
\]

Fix a small constant $\rho>0$ and let $\tau$ be the hitting time of $\cB(1,\rho)$, i.e. $\tau$ is the smallest $t$ such that $|\gamma_{\ID}(t)-1|\leq\rho$. Define $E(\delta,\eta) $ subset of $\cS_{\mathrm{simple}}(\ID)$ as the event that there exists $(s,t)\in[0,\tau]^2$ with $s<t$ such that
\begin{itemize}
 \item diam $\left(\gamma[s,t]\right)\geq \eta$ and
 \item there exists a crosscut $\cC$, diam$(\cC)\leq \delta$, that separates $\gamma(s,t]$ from $\mathcal{B}(1,\rho)$ in $\ID\backslash\gamma(0,s]$.
\end{itemize}
Denote the set of such pairs $(s,t)$ by $\mathcal{T}(\delta,\eta)$.

Recall that for $0< \delta \leq \eta$, we say $\gamma$ has a \textit{nested } $(\delta,\eta)$ \textit{ bottleneck in } $D$ if there exists $t\in[0,T]$ and $\cC\in S_{t,\delta}$ with diam$\gamma_{\cC}\geq\eta$. Observe that $\eta$ is a bound for the tip structure modulus for $\gamma$ in $D$ if and only if $\gamma$ has no nested $(\delta,\eta)$ bottleneck in $D$. So, in other words, $E(\delta,\eta)$ is the event that there exists a nested $(\delta,\eta)$-bottleneck somewhere in $\ID$.

 The following proposition relates an annulus-crossing type event $E(\delta,\eta)\subset \cS_{\mathrm{simple}}(\ID)$ to the speed of convergence of radial limit of a conformal map towards the tip of $\tilde\gamma$ in $\IH$.

 \begin{proposition}\label{boundtotip}
  There exists a constant $C>0$ and increasing function $\mu:[0,1]\rightarrow\IR_{\geq 0}$ such that $\displaystyle\lim_{\delta\rightarrow 0} \mu(\delta)=0$ and the following holds. Let $\delta < \min(2,\rho),\; \eta \geq 2 \delta$. Assume $\gamma_\ID(0,t)\subset \ID\backslash\cB(1,2\rho)$. Let $\eta_{\mathrm{tip}}(\delta)$ be the tip structure modulus for $\gamma_{\ID}$. If $\gamma_\ID$ is not in $E(\delta,\eta)$ then
 \begin{align}
  \sup_{y\in(0,\mu(\delta)]} | \tilde\gamma(t) - F(t,y)| \leq C \rho^{-2} \eta_{\mathrm{tip}}(\delta). \label{bound1}
 \end{align}

 For the proof, we will use Wolff Lemma (see \cite{MR1217706}):
 \begin{lemma}\label{distortconf}
  Let $\phi$ be conformal map from open set $U\subset\IC$ into $\cB(0,R)$. Let $z_0\in\IC$ and let $C(r) = U\cap \{z:|z-z_0|=r\}$ for any $r>0$. Then
  \[
   \inf_{\rho<r<\sqrt{\rho}} \{\diam(\phi(C(r))) \} \leq \frac{2\pi R}{\sqrt{\log 1/\rho}}.
  \]

 \end{lemma}

 \begin{proof}[Proof of Proposition \eqref{boundtotip}]
 Let $\mu(\delta) = \exp \left(-\frac{2\pi^2}{\delta^2}\right)$. Fix $t\in\IR_{\geq 0}$ and let $C_y = \{ \Phi^{-1}\circ g_t^{-1}(W_t+ye^{i\theta}): \theta \in (0,\pi) \}$ and $z_y = \Phi^{-1} \circ g_t^{-1}(W_t+iy)$. By Lemma \eqref{distortconf}, for each $\delta>0$, there exists $y_\delta\in [\mu(\delta), \sqrt{\mu(\delta)}]$ such that $C_{y_\delta}$ has diameter less than $\delta$. Thus by the definition of tip structure, we have $\mathrm{dist}(\gamma_{\ID}(t),C_{y_\delta})\leq \eta_{\mathrm{tip}}(\delta)$. Then by the assumption that $\gamma_\ID$ is not in $E(\delta,\eta)$, the path with least diameter from $z_{y_\delta}$ to $\gamma(t)$ has diameter at most $\delta + \eta_{\mathrm{tip}}(\delta)<2\eta_{\mathrm{tip}}(\delta)$.

 By the Gehring-Hayman theorem (see \cite[Theorem 4.20]{MR1217706}), the diameter of $J:= \Phi^{-1}\circ g_t^{-1}(\{W_t+iy:\; y\in(0,\mu(\delta)]\}) < C\eta_{\mathrm{tip}}(\delta)$ where $C$ is an absolute constant. Observe that $$\mathrm{dist}(\ID\backslash\cB(1,2\rho),\cB(1,\rho))>\rho$$ and so $J\subset \ID\backslash\cB(1,\rho)$. Also, by definition of the map $\Phi$, $|\Phi'(z)|=\frac{2}{|1-z|^2}$ and so $\frac{1}{2}<|\Phi'(z)|<\frac{1}{\rho^2}$. Hence, $\mathrm{diam}\Phi(J)$ is at most $C\rho^{-2}\eta_{\mathrm{tip}}(\delta)$. Hence, we get the result (\eqref{bound1}).

 \end{proof}

 \end{proposition}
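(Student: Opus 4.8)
The plan is to push the whole picture into the bounded reference disk $\ID$, produce a crosscut of diameter $\le\delta$ enclosing the tip by a Wolff-type distortion estimate, and then control the Euclidean size of the conformal image of a hyperbolic geodesic segment via the Gehring--Hayman theorem. Concretely, I would set $\mu(\delta):=\exp(-c_1/\delta^2)$ for a suitable absolute constant $c_1$; this is increasing on $[0,1]$ (with $\mu(0):=0$) and tends to $0$ as $\delta\to0$. Fix $t$ with $\gamma_\ID(0,t)\subset\ID\setminus\cB(1,2\rho)$ and let $(g_s),(W(s))$ be the Loewner data of $\tilde\gamma$ in $\IH$. The composition $\psi:=\Phi^{-1}\circ g_t^{-1}$ is a conformal map of $\IH$ onto $\ID\setminus\gamma_\ID[0,t]$ taking $W(t)$ to the tip $\gamma_\ID(t)$ and $\infty$ to $1$; for $y>0$ write $C_y:=\psi(\{W(t)+ye^{i\theta}:\theta\in(0,\pi)\})$ and $z_y:=\psi(W(t)+iy)$, so that $C_y$ is a crosscut of $\ID\setminus\gamma_\ID[0,t]$ separating $\gamma_\ID(t)$ from $1$, $z_y\in C_y$, and $z_y=F_\ID(t,y)$.

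Next I would apply Wolff's Lemma (Lemma~\ref{distortconf}) to $\psi$ with centre $W(t)$, $R=1$, and the role of its ``$\rho$'' played by $\mu(\delta)$: this produces $y_\delta\in[\mu(\delta),\sqrt{\mu(\delta)}]$ with $\diam C_{y_\delta}\le 2\pi/\sqrt{\log(1/\mu(\delta))}\le\delta$, the last inequality being exactly what the choice of $c_1$ is for. Now $C_{y_\delta}$ is a crosscut of diameter $\le\delta$ separating $\gamma_\ID(t)$ from $1$, hence an admissible crosscut in the definition of the tip structure modulus, so $\diam\gamma_{C_{y_\delta}}\le\eta_{\mathrm{tip}}(\delta)$ and in particular $\dist(\gamma_\ID(t),\overline{C_{y_\delta}})\le\eta_{\mathrm{tip}}(\delta)$. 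Concatenating $C_{y_\delta}$ (diameter $\le\delta$) with the sub-arc $\gamma_{C_{y_\delta}}$ of $\gamma_\ID$ (diameter $\le\eta_{\mathrm{tip}}(\delta)$), slightly perturbed into $\ID\setminus\gamma_\ID[0,t]$, I obtain a connected set joining $z_{y_\delta}$ to $\gamma_\ID(t)$ inside the domain of diameter $<\delta+\eta_{\mathrm{tip}}(\delta)<2\eta_{\mathrm{tip}}(\delta)$ (using $\delta\le\eta_{\mathrm{tip}}(\delta)$). This is precisely where the hypothesis $\gamma_\ID\notin E(\delta,\eta)$ enters: it rules out the configuration in which a macroscopic piece of the curve is trapped behind the $\delta$-crosscut $C_{y_\delta}$, and so guarantees that the curve-portion behind $C_{y_\delta}$ is genuinely controlled by $\eta_{\mathrm{tip}}(\delta)$ and that the connecting set built above stays small.

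For the final estimate I would invoke Gehring--Hayman (\cite[Theorem 4.20]{MR1217706}). The set $J:=\{z_y:0<y\le\mu(\delta)\}$ is contained in $J_{y_\delta}:=\{z_y:0<y\le y_\delta\}$ since $\mu(\delta)\le y_\delta$, and $J_{y_\delta}$ is the $\psi$-image of a hyperbolic geodesic ray in $\IH$ with endpoints the tip $\gamma_\ID(t)$ and $z_{y_\delta}$. Gehring--Hayman then bounds $\diam J\le\diam J_{y_\delta}$ by an absolute multiple of the least diameter of a curve joining these two endpoints inside $\ID\setminus\gamma_\ID[0,t]$, i.e. by $C\eta_{\mathrm{tip}}(\delta)$. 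Assuming $\delta$ small enough in terms of $\rho$ that $C\eta_{\mathrm{tip}}(\delta)<\rho$---the only regime relevant for the applications---we get $J\subset\ID\setminus\cB(1,\rho)$ because $\gamma_\ID(t)\in\ID\setminus\cB(1,2\rho)$. On $\ID\setminus\cB(1,\rho)$ one has $|\Phi'(z)|=2/|1-z|^2\le 2\rho^{-2}$, hence $\diam\Phi(J)\le 2\rho^{-2}\diam J\le C'\rho^{-2}\eta_{\mathrm{tip}}(\delta)$. Finally $\Phi(J)=\{F(t,y):0<y\le\mu(\delta)\}$ and $\tilde\gamma(t)=\lim_{y\to0^+}F(t,y)\in\overline{\Phi(J)}$, so $\sup_{y\in(0,\mu(\delta)]}|\tilde\gamma(t)-F(t,y)|\le\diam\Phi(J)\le C'\rho^{-2}\eta_{\mathrm{tip}}(\delta)$, which is \eqref{bound1}.

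The hard part will be the middle step: verifying that $C_{y_\delta}$ really is an admissible crosscut in the sense of the definition of $\eta_{\mathrm{tip}}$ (a crosscut of $\ID\setminus\gamma_\ID[0,t]$ of diameter $\le\delta$ separating the tip from the far target), and---more importantly---spelling out precisely how excluding $E(\delta,\eta)$ prevents the curve from making a long excursion behind this crosscut, so that the connecting set assembled from $C_{y_\delta}$ and $\gamma_{C_{y_\delta}}$ has diameter comparable to $\eta_{\mathrm{tip}}(\delta)$ rather than something uncontrolled. Once this geometric bookkeeping is settled, Wolff's Lemma and Gehring--Hayman are applied essentially off the shelf, and the only remaining routine points are fixing the constant $c_1$ in $\mu$ and confirming the $|\Phi'|$ bound on $\ID\setminus\cB(1,\rho)$.
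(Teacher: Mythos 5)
Your proposal follows the same route as the paper's proof: choose $\mu(\delta)=\exp(-c_1/\delta^2)$, apply Wolff's lemma to produce a crosscut $C_{y_\delta}$ of diameter at most $\delta$ separating the tip from $1$, use the tip-structure definition together with the exclusion of $E(\delta,\eta)$ to bound the least-diameter join of $z_{y_\delta}$ and $\gamma_\ID(t)$ by $2\eta_{\mathrm{tip}}(\delta)$, invoke Gehring--Hayman for $\diam J$, and finish with the bound $|\Phi'|\leq 2\rho^{-2}$ on $\ID\setminus\cB(1,\rho)$. The only minor difference is that you make explicit the requirement $C\eta_{\mathrm{tip}}(\delta)<\rho$ for $J\subset\ID\setminus\cB(1,\rho)$ and frame the role of $\gamma_\ID\notin E(\delta,\eta)$ slightly more discursively, but these are presentation choices; the argument and its ingredients coincide with the paper's.
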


 \subsection{Results from Aizenman and Burchard}
   In order to prove Proposition \eqref{tip} we need some results from Aizenman and Burchard found in their paper \cite{AB}.
 Aizenman and Burchard \cite{AB} were studying the regularity of a curve which depends on how much the curve ``wiggles back and forth.'' They concluded that the following assumption on a collection of probability measures on the space of curves is able to guarantee a certain degree of regularity of a random curve while remaining intrinsically rough.
\textbf{Hypothesis H1.} The family of probability measures $(\IP_h)_{h>0}$ satisfies a \textit{power-law bound on multiple crossings} if there exists $C>0, \; K_n>0$ and sequence $\Delta_n>0$ with $\Delta_n\rightarrow\infty$ as $n\rightarrow\infty$ such that for any $0<h<r\leq C^{-1} R$ and for any annulus $A = A(z_0,r,R)$
\[
 \IP\left(\gamma \text{ makes } n \text{ crossings of } A\right) \leq K_n \left(\frac{r}{R}\right)^{\Delta_n}.
\]

\noindent Recall the following definition which will be needed in the proceeding result.
\begin{definition} A random variable $U$ is said to be \textit{stochastically bounded} if for each $\epsilon > 0$ there is $N>0$ such that
 \[
  \IP_h\left( |U| > N \right)\leq \epsilon \text{ for all } h>0
 \]

\end{definition}

\noindent The following is a reformulation of the result of \cite{AB}. In particular, Lemma 3.1 and Theorem 2.5 with the equation (2.22) within the proof.

\noindent Denote
\[
 M(\gamma, l) = \min \left\{\! n\in\IN \; \bigg|\begin{aligned} \; \exists \text{ partition } 0=t_0<t_1<\cdots<t_n=1 \\ \text{ s.t. } \text{diam}\left(\gamma[t_{k-1},t_k]\right)\leq l \text{ for } 1\leq k\leq n\end{aligned} \right\}.
\]

\begin{theorem}(Aizenman-Burchard). Suppose that for a collection of probability measures $ (\IP_h)_{h>0} $ there exists $\beta > 1,  \; n\in\IN, \; \tilde{\Delta} > 0$ and $D > 0$ such that $ \dfrac{\beta - 1}{\beta}\tilde{\Delta} > 2 $ and
\[
 \IP_h \left( \gamma \text{ crosses } A(z_0, \rho^\beta, \rho) \text{ at least n times } \right) \leq D\rho^{\tilde{\Delta}}
\]
for any $z_0$ and any $\rho > 0$. Then there exists a random variable $r_0 > 0$ which remains stochastically bounded as $h\rightarrow 0$ with
\[
 \IP_h\left(r_0 < u \right) \leq C u^{(\beta - 1)\tilde{\Delta} - 2\beta}
\]
and
\[
 M(\gamma, l) \leq \tilde{C}(\gamma) l^{-2\beta}
\]
where the random variable $\tilde{C}(\gamma) = c\left(\frac{l}{r_0}\right)^{2\beta}$ stays stochastically bounded as $h\rightarrow 0$. Furthermore, $(\IP_h)_{h>0} $ is tight and $\gamma$ can be reparameterized such that $\gamma$ is H\"older continuous with any exponent less than $\frac{1}{2}$ with stochastically bounded H\"older norm.

\end{theorem}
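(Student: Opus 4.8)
The plan is to reproduce the multiscale scheme of Aizenman and Burchard, which splits into three moves: a probabilistic union bound producing the random ``last good scale'' $r_0$; a purely deterministic covering lemma (their Lemma~3.1) turning the absence of $n$-fold crossings below $r_0$ into the bound on $M(\gamma,\cdot)$; and the standard passage from a power-law covering bound to tightness and H\"older regularity (their Theorem~2.5 and the estimate~(2.22) used in its proof).

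First I would set up the random variable $r_0$. Fix a geometric sequence of scales $\rho_j = q^{\,j}$, $q\in(0,1)$, and for each $j$ a maximal $\rho_j^{\beta}$-separated net $N_j$ of a fixed neighbourhood of $\overline D$, so that $\#N_j = O(\rho_j^{-2\beta})$ and every annulus $A(z_0,\rho_j^{\beta},\rho_j)$ with $z_0\in\overline D$ contains a near-concentric annulus $A(z,2\rho_j^{\beta},\tfrac12\rho_j)$ with $z\in N_j$ (after adjusting $q$). Declare scale $\rho_j$ \emph{bad} if $\gamma$ crosses $A(z,2\rho_j^{\beta},\tfrac12\rho_j)$ at least $n$ times for some $z\in N_j$, and let $r_0$ be the largest scale below which no scale is bad. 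Since a single crossing of $A(z_0,\rho^{\beta},\rho)$ already crosses every concentric sub-annulus, any $n$-fold crossing at a continuum scale $\rho\le\rho_j$ is witnessed by one of the discrete bad events; a union bound over $N_j$ together with the hypothesis $\IP_h(\gamma$ crosses $A(z_0,\rho^{\beta},\rho)$ at least $n$ times$)\le D\rho^{\tilde\Delta}$ and a geometric summation over $\rho_j\le u$ then give
\[
 \IP_h(r_0<u)\ \le\ C\,u^{(\beta-1)\tilde\Delta-2\beta}.
\]
The condition $\tfrac{\beta-1}{\beta}\tilde\Delta>2$ makes the exponent positive, so $r_0$ stays bounded away from $0$ uniformly in $h$ (in the stochastic sense); the fussiest point is exactly why the factor $\beta-1$ (rather than $1$) appears — controlling the continuum of scales below $u$ only forces control of the discrete scales down to a threshold of order $u^{1/\beta}$.

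Second, I would import the deterministic covering lemma, which is the core. One proves that on the event $\{r_0\ge l\}$ (i.e.\ $\gamma$ has no nested $n$-fold crossing of the annuli $A(z,\rho_j^{\beta},\rho_j)$ for $\rho_j\ge l$), the interval $[0,1]$ admits a partition into at most $C\,l^{-2\beta}$ subintervals on each of which $\diam\gamma\le l$, i.e.\ $M(\gamma,l)\le\tilde C(\gamma)\,l^{-2\beta}$ with $\tilde C(\gamma)=c(l/r_0)^{2\beta}$, which is stochastically bounded by Step~1. The mechanism is a recursive time-subdivision: on any time interval where $\gamma$ exceeds the current target diameter, pick a centre on that sub-curve and subdivide at the successive entrance/exit times of a suitable annulus of fixed aspect ratio about that centre; the crossing bound caps the number of children of each node, and since the aspect ratios of the relevant annuli shrink geometrically the recursion terminates after $O(\log(1/l))$ levels, producing the power-law leaf count $l^{-2\beta}$ (the exponent $2\beta$ being the box-counting exponent forced by stacking annuli of ratio $\rho^{\beta-1}$). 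Finally, given $M(\gamma,l)\le\tilde C(\gamma)\,l^{-2\beta}$ with $\tilde C$ stochastically bounded, reparametrising so that the pieces at dyadic scale $2^{-k}$ each receive time $1/M(\gamma,2^{-k})$ gives, by a routine chaining estimate, a modulus of continuity $|\gamma(t)-\gamma(s)|\le C\,|t-s|^{1/(2\beta)}$ (up to a logarithm) with stochastically bounded constant, so Arzel\`a--Ascoli yields tightness in $(\cS,d_{\cS})$; since in the situations of interest the hypothesis holds with $\beta$ arbitrarily close to $1$ (take $n$ large so that $\tilde\Delta=\Delta_n$ satisfies $\tfrac{\beta-1}{\beta}\tilde\Delta>2$), this upgrades to H\"older continuity of every exponent $<1/2$.

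The main obstacle is the middle step: the probabilistic part is a bare union bound, but extracting a deterministic power-law covering from ``no $n$-fold crossings'' requires the careful recursion of \cite{AB}, in which the subdivision centres and annulus radii must be chosen so that each subdivision reduces diameters by a definite geometric factor \emph{and} the crossing hypothesis still applies at every node with the same $n$; balancing this geometry against the combinatorics is what pins down both the exponent $2\beta$ and the admissible range $\tfrac{\beta-1}{\beta}\tilde\Delta>2$. Since that argument is carried out in full in \cite{AB}, in the write-up I would cite it and only spell out the bookkeeping needed to phrase it in terms of the annuli $A(z_0,\rho^\beta,\rho)$ and the random variable $r_0$ as above.
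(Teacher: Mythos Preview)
Your proposal is correct and matches the paper's treatment: the paper does not give its own proof of this theorem but simply records it as a reformulation of results from \cite{AB} (specifically their Lemma~3.1 and Theorem~2.5 together with the estimate~(2.22) in its proof), and what you have sketched is precisely the Aizenman--Burchard multiscale argument those references contain. Your explicit outline of the union bound for $r_0$, the deterministic covering recursion, and the passage to H\"older regularity is thus a faithful expansion of what the paper only cites.
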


\begin{remark}
 If Hypothesis H1 holds then for any $\beta>1$, one has
 \[
  \IP_h \left( \gamma \text{ crosses } A(z_0, \rho^\beta, \rho) \text{ at least n times } \right) \leq D\rho^{(\beta-1){\Delta_n}}
 \]
and as $(\beta-1)\Delta_n > 2\beta$ for large enough $n$, the previous theorem applies. In this case, Hypothesis H1 can only be applied for $0<h<\rho^\beta$. Hence, $M(\gamma, l)  \leq \tilde{C}(\gamma) l^{-2\beta} $ for $l\geq h^{1/\beta}$. In the other cases we use that:
\begin{align*}
 M(\gamma,l) & \leq c\left(\frac{h^{1/\beta}}{l}\right)^2 M(\gamma, h^{1/\beta}) \; \text{ for } 0<l\leq h \\
 \text{ and } M(\gamma,l) & \leq M(\gamma, h) \leq C' l^{-(4\beta-2)}\; \text{ for } h<l<h^{1/\beta}.
\end{align*}
In all cases, we still obtain a power bound.
\end{remark}

Kempannien and Smirnov show in \cite[Proposition 3.6]{KS} that the assumption hypothesis H1 can be verified given the KS condition holds.
\begin{proposition}[\cite{KS}]
 If the random family of curves $\{\gamma_n\}$ satisfies the KS condition, then it (transformed to $\ID$) satisfies  Hypothesis H1.
\end{proposition}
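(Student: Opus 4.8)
The plan is to route through the equivalent conformal form of the KS Condition, then upgrade a bound on a single unforced crossing to a bound on $n$ crossings by iterating the Markov property, and finally absorb the distinction between forced and unforced crossings by a topological argument.

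First I would use the conformal invariance of the KS Condition (it is equivalent to Condition C2, which is stated purely in terms of extremal length of conformal rectangles and hence invariant under $\phi^n$, see Proposition 2.6 in \cite{KS}) to conclude that $\{\gamma^n_{\ID}\}$ satisfies the KS Condition in $\ID$, and therefore also its equivalent power-law form, Condition G3: there exist $K>0$ and $\Delta>0$, independent of $n$, such that for every annulus $A=A(z_0,r,R)$, every stopping time $\tau\in[0,1]$, and every $n$,
\[
\IP_n\bigl(\gamma^n_{\ID}[\tau,1]\text{ makes an unforced crossing of }A \mid \gamma^n_{\ID}[0,\tau]\bigr)\le K\,(r/R)^{\Delta}.
\]
(The passage from the constant $1/2$ in the KS Condition to a power of $r/R$ is the familiar sub-annulus argument: cut $A$ into roughly $\log_C(R/r)$ concentric annuli of fixed modulus and apply the KS Condition successively at the stopping times marking the completion of an unforced crossing of each, so that the probabilities multiply.)

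Second I would iterate in the number of crossings. Fix $A=A(z_0,r,R)$ with $Cr<R$, set $\tau_0=0$, and for $j\ge 1$ let $\tau_j$ be the time at which $\gamma^n_{\ID}$ completes its $j$-th unforced crossing of $A$ relative to the domain slit along $\gamma^n_{\ID}[0,\tau_{j-1}]$, with $\tau_j=\infty$ if no such crossing exists. These are stopping times, and on $\{\tau_{j-1}<\infty\}$ the existence of a further such crossing is exactly the event that $\gamma^n_{\ID}[\tau_{j-1},1]$ makes an unforced crossing of $A$ in the slit domain; applying G3 at $\tau_{j-1}$ and taking expectations gives $\IP_n(\tau_j<\infty)\le K(r/R)^{\Delta}\,\IP_n(\tau_{j-1}<\infty)$, so by induction
\[
\IP_n\bigl(\gamma^n_{\ID}\text{ makes at least }n\text{ unforced crossings of }A\bigr)\le \bigl(K\,(r/R)^{\Delta}\bigr)^{n}.
\]
Then I would pass from unforced to all crossings: if $\gamma^n_{\ID}$ makes $n$ crossings of $A$, taken in time order, only the first can be forced in the relevant sense, because that first crossing sub-arc runs from $\partial B(z_0,r)$ to $\partial B(z_0,R)$ inside a single component of $A$ intersected with the current domain, and once it is added to the slit no component of $A$ intersected with the updated domain that carries a later crossing can still disconnect the two marked prime ends — one can route around the arc already drawn. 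Hence at least $n-1$ of the $n$ crossings are unforced in the time-ordered, domain-updated sense above, and combining with the previous bound,
\[
\IP_n\bigl(\gamma^n_{\ID}\text{ makes at least }n\text{ crossings of }A(z_0,r,R)\bigr)\le \bigl(K\,(r/R)^{\Delta}\bigr)^{n-1}=:K_n\,(r/R)^{\Delta_n},
\]
with $K_n=K^{n-1}$ and $\Delta_n=(n-1)\Delta\to\infty$; for $r/R$ bounded below the estimate is trivial after enlarging $K_n$. This is exactly Hypothesis H1 for $\{\gamma^n_{\ID}\}$.

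The hard part will be the third step: making the topological claim precise and uniform — that an essential crossing arc "uses up" the separating channel of the annulus, so that every subsequent crossing is unforced relative to the updated domain (a bounded deficit would in any case be harmless) — together with the bookkeeping that guarantees the $\tau_j$ are genuine stopping times and that "unforced crossing of $A$ by the future curve in the slit domain" is precisely the event to which G3 applies at each stage. The first two steps are routine once the equivalence of the KS Condition with Conditions G3 and C2 is in hand. This is the argument carried out in \cite[Proposition 3.6]{KS}.
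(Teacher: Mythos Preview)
The paper does not give its own proof of this proposition; it simply quotes \cite[Proposition~3.6]{KS}. Your sketch follows the strategy of that reference, and your first two steps --- transferring the KS Condition to $\ID$ via its equivalence with the conformally invariant Condition~C2, then iterating Condition~G3 at the successive stopping times to get $\bigl(K(r/R)^\Delta\bigr)^n$ for $n$ unforced crossings --- are correct and match \cite{KS}.

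Your third step, however, contains a real gap. The claim that ``only the first crossing can be forced'' is false as stated. Take $A=A(0,r,R)\subset\ID$ with $R<1$, so that both marked prime ends $\pm1$ lie outside $\overline{B(0,R)}$. The full annulus does not disconnect $-1$ from $1$, so the \emph{first} crossing is unforced. But once the curve has crossed into $B(0,r)$ along an arc $\alpha_1$, the slit annulus $A\setminus\alpha_1$ is a single topological rectangle, and removing it from $D_\tau$ leaves the tip (inside $\overline{B(0,r)}$) and the target $1$ (outside $\overline{B(0,R)}$) in different components; hence the \emph{second} crossing is forced. Your heuristic ``one can route around the arc already drawn'' fails precisely here: the only passage from the inner disk to the outer region in $D_\tau$ is through $A\setminus\alpha_1$. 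The correct topological input, which is what \cite{KS} actually establishes, is weaker: among $n$ crossings of $A$, only a bounded number (independent of $n$) can be forced --- roughly because consecutive crossing arcs bound nested topological quadrilaterals inside $A$, and only boundedly many of these can be in a separating position relative to the marked prime ends. You anticipated this with your remark that ``a bounded deficit would in any case be harmless'', which is exactly right; replacing your incorrect claim by this bounded-deficit statement (and supplying its proof, which is the genuine content of \cite[Proposition~3.6]{KS}) completes the argument.
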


\subsection{Proof of Main Estimate for Tip Structure Modulus}

 \begin{proof}[Proof of Proposition \eqref{tip}]
  Suppose for now that $0<\delta<\eta/20$. Observe that $\eta$ is a bound for the tip structure modulus for $\gamma$ in $D$ if and only if $\gamma$ has no nested $(\delta,\eta)$ bottleneck in $D$. Thus, it is enough to look at the probability that there exists a nested $(\delta,\eta)$ bottleneck somewhere in $D$.

Define $E(\delta,\eta)$ as the event that there exists $(s,t)\in[0,\tau]^2$ with $s<t$ such that
\begin{itemize}
 \item diam $\left(\gamma[s,t]\right)\geq \eta$ and
 \item there exists a crosscut $\cC$, diam$(\cC)\leq \delta$, that separates $\gamma(s,t]$ from $\mathcal{B}(1,\rho)$ in $\ID\backslash\gamma(0,s]$.
\end{itemize}
Denote the set of such pairs $(s,t)$ by $\mathcal{T}(\delta,\eta)$. In other words, $E(\delta,\eta)$ is the event that there exists a nested $(\delta,\eta)$-bottleneck somewhere in $\ID$.

\textsc{Step 1.} Divide the curve $\gamma$ into $N$ arcs of diameter less than or equal to $\eta/4$.

 Let $\sigma_k$ be defined by $\sigma_k = 0$ for $k \leq 0$ and then recursively
 \[\sigma_k = \sup\{t\in[\sigma_{k-1}, 1] \; : \; \text{diam}(\gamma[\sigma_{k-1},t])<\eta/4\}.\]
 Let $J_k = \gamma[\sigma_{k-1},\sigma_{k}]$ and $J_0 = \partial\ID$. Let $M(\gamma, l)$ be the minimum number of segments of $\gamma$ with diameters less than or equal to $l$ that are needed to cover $\gamma$. There is the following observation: If the curve is divided into pieces that have diameter at most $\eta/4 - \epsilon$ for $\epsilon >0 $, then none of these pieces can contain more than one of the $\gamma(\sigma_k)$ by how the curve was divided. Thus, $N\leq \inf_{\epsilon>0} M(\gamma, \eta/4 - \epsilon) \leq M(\gamma, \eta/8)$.

 \textsc{Step 2.} For $E(\delta,\eta)$ to occur there has to be a fjord of depth $\eta$ with mouth formed by some pair $(J_j, J_k) \; j < k$ and a piece of the curve enters the fjord resulting in an unforced crossing.

 Define stopping times
 \[
  \tau_{j,k} = \inf \{ t\in[\sigma_{k-1}, \sigma_k] \; : \; \text{dist}(\gamma(t), J_j) \leq 2r\} \text{ for } 0\leq j < k.
 \] where the distance is the infimum of numbers $l$ such that $\gamma(t)$ can be connected to $J_j$ by a path of $\text{diam} < l $ in $D_t=\ID\backslash\gamma[0,t]$. If empty, define $\inf = 1$.

 Suppose $E(\delta,\eta)$ occurs. Take a crosscut $C$ and pair of times $0\leq s \leq t$ as in the definition.

 Let $j<k$ be such that the end points of $C$ are on $J_j$ and $J_k$. Notice that $\text{dist}(J_j,J_k)\leq \delta$ and so the stopping time $\tau_{j,k}$ is finite.

 Set $z_1=\gamma(\tau_{j,k})$ and let $z_2$ be any point on $J_j$ such that $|z_1-z_2|=2\delta$ and
 \[
  C' = [z_1,z_2] : = \{\lambda z_1 + (1-\lambda) z_2\; : \; \lambda\in[0,1]\}.
 \]

 Let $V \subset D_s$ be connected component of $D_s\backslash C$ which is disconnected from $+1$ by $C$ in $D_s$ and let
  \begin{align*}
   V' & = \{ z \in V \; : \; z \text{ disconnected from } 1 \text{ by } C' \text{ in } D_{\tau_{j,k}} \} \\
   D' & = D_s\backslash V'.
  \end{align*}

 \textsc{Claim}  There is an unforced crossing of $A_{j,k}:=A(z_1,2\delta,\eta/2)$ as observed at time $\tau_{j,k}$.

Indeed, consider the subpath of $J_j\cup J_k$ which connects end point of $C$ to endpoint of $C'$. That is, $\gamma[0,t_C]$ where $t_c\in[0,1]$ is the unique time such that $\{\gamma(t_C)\} = \overline{C}\cap J_k$. So,
\[
 \partial D' = \partial\ID\cup\gamma[0,t_C]\cup C \subset \left(\partial\ID\cup\gamma[0,\tau_{j,k}] \right)\cup\left(J_k\cup C\right).
\]
Hence, $J_k\cup C$ separates $V$ from $+1$ in $D_{\tau_{j,k}}$. Since $V\backslash V'$ is the set of points disconnected by $C$ from $+1$ in $D_s$ but not by $C'$ in $D_{\tau_{j,k}}$, we see that
\[
 V\backslash V'\subset \text{ union of bounded components of } \IC\backslash\left(J_j\cup J_k\cup C\cup C'\right).
\]
Thus, $V\backslash V'\subset \mathcal{B}\left(z_1, \eta/4 + 3\delta \right).$ Since $\gamma[s,t]$ is a connected subset of V, there are $[s',t']\subset (\tau_{j,k}, t]$ such that $\gamma[s',t'] \subset \overline{V'}$ and it crosses $A_{j,k} := A(z_1, 2\delta, \eta/2)$. Thus, $\gamma[s,t]$ contains an unforced crossing of $A_{j,k}$ as observed at time $\tau_{j,k}$.

 As $\gamma[s,t]\subset V$ and $\gamma[s,t]$ is connected, we can find $[s',t']\subset(\tau_{j,k},t]$ such that $\gamma[s',t']\subset\overline{V'}$ and it crosses $A_{j,k} = A(z_1, 2\delta, \eta/2)$. Thus, $\gamma[s,t]$ contains an unforced crossing of $A_{j,k}$ as observed at time $\tau_{j,k}$.

 \begin{figure}
 \centering
 \includegraphics[width=10cm]{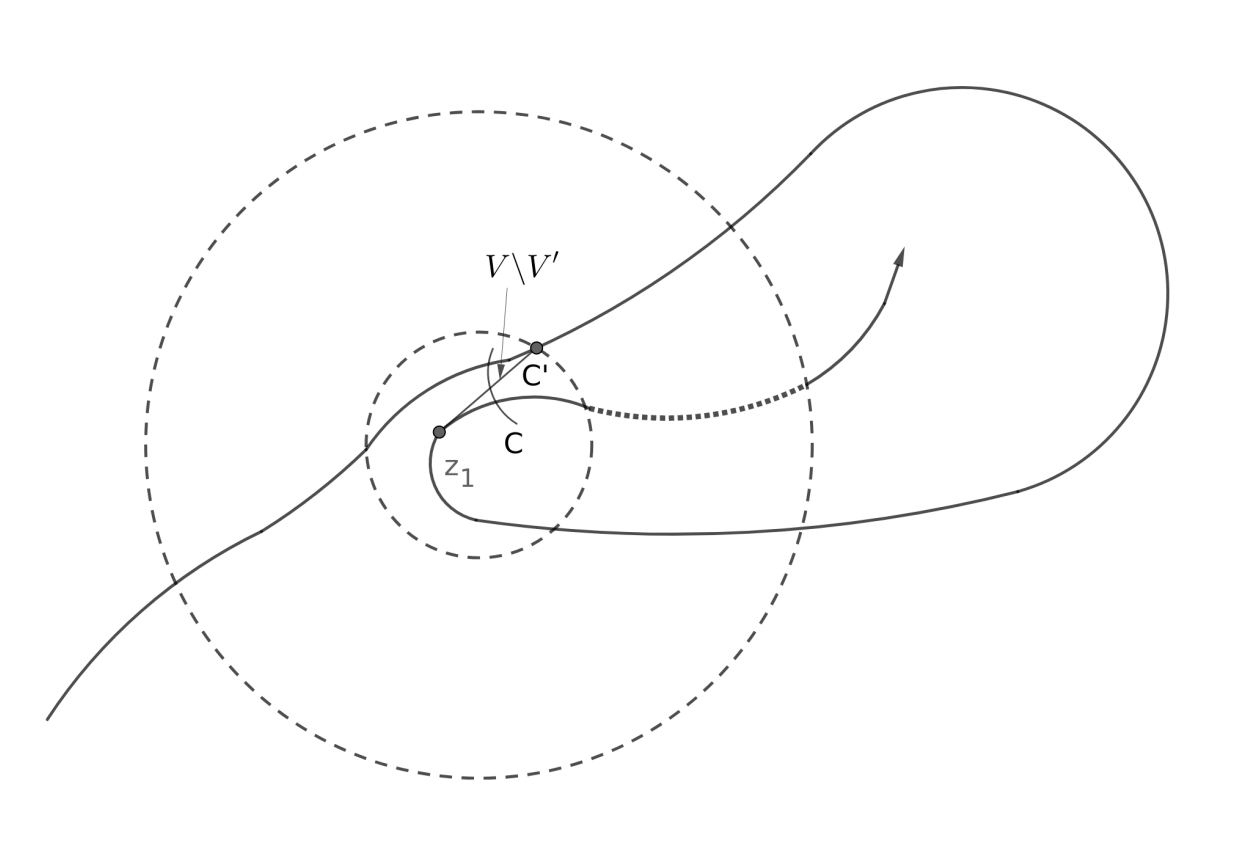}
 \caption{The boundary and the curve are cut into pieces of diameter $< \eta/4$. 
 The dotted portion of the curve is an example of event $E_{j,k}$ and has diameter $> \eta/8$. The number of $J_k$'s and the number of dotted pieces are both stochastically bounded as $h\rightarrow 0$.}
\end{figure}

\textsc{Step 3.} Estimate $\IP\left(E(\delta,\eta)\right)$.

Define
\[
 E_{j,k} = \left\{\! \gamma \in \cS_{\text{simple}}\left(\ID, -1, +1 \right) \; \bigg|\begin{aligned} \; &\gamma[\tau_{j,k},1] \text{ contains a crossing of } \\ & A_{j,k} \text{ contained in } \tilde{A}_{j,k} \end{aligned} \right\}
\]
where $$\tilde{A}_{j,k} : = \left\{ z \in A_{j,k}\cap D_{\tau_{j,k}} : \text{connected component of } z \text{ is disconnected from } 1 \text{  by } C' \text{ in } D_{\tau_{j,k}} \right\}. $$
We have seen that
\[
 E(\delta,\eta)\subset\bigcup_{j=0}^{\infty}\bigcup_{k=j+1}^{\infty}E_{j.k}
\]
and by \textit{the KS Condition},
\[ \IP\left(E_{j,k}\right)\leq K \left(\dfrac{\delta}{\eta}\right)^\Delta.\]

Let $\epsilon > 0$. Let $m\in\mathbb{N}$ then using the tortuosity bounds for $\IP(N > m)$ and the facts that $\{N\leq m\}\cap E_{j,k} = \emptyset$ when $k>m$ and $\IP(\{N\leq m \}\cap E_{j,k}) \leq \IP(E_{j,k})$, we get
\[
 \begin{aligned}
  \IP(E(\delta,\eta)) & \leq \IP(N > m ) + \IP\left( \bigcup_{0\leq j <k} \{N\leq m\} \cap E_{j,k} \right) \\
  & \leq \text{ constant } \left[ m^{-1/2(1+\epsilon)[\epsilon\Delta_k - 2(1+\epsilon)]} + m^2 \left(\frac{\delta}{\eta} \right)^{\Delta} \right].
 \end{aligned}
\]
Choose $m = \eta^{-2(1+\epsilon)}$. Then
\[
 \IP(E(\delta,\eta))  \leq \text{constant} \left[ \eta^{\epsilon\Delta_k - 2(1+\epsilon)} + \eta^{-4(1+\epsilon)} \left(\frac{\delta}{\eta} \right)^{\Delta} \right].
\]
If we choose $\delta = c \eta^{1+\tilde{\epsilon}}$ where $\tilde{\epsilon}\in\left(0,\frac{4(1+\epsilon)}{\Delta} \right)$ then
\[
 \IP\left(E(c\eta^{1+\tilde{\epsilon}}, \eta) \right) \leq c \eta^\alpha \; \text{ for some } \alpha > 0.
\]

 \end{proof}

\subsection{Convergence of Discrete Domains}
In this section, we prove that the conformal maps to the discrete approximations for H\"older simply-connected domains converge to the corresponding maps polynomially fast, up to the boundary.
\begin{lemma}\label{lem:discrete}
Let $D_n$ be a $1/n$-discrete lattice approximation of domain $D$, $w_0$ be a point in $D$ and $\psi$, $\psi_n$ be the conformal maps of the unit disc $\ID$ to $D$ and $D_n$ respectively with $\psi(0)=\psi_n(w_0)=0$, $\psi'(0)>0$, $\psi'_n(0)>0$.

Suppose that $\psi$ is $\alpha$-H\"older.
Then there are constants $C$, $\beta>0$ and $N$ depending only on the lattice in question, $\alpha$, the $\alpha$-H\"older norm of $\psi$, and on the conformal radius $\psi'(0)$, such that for $n>N$ and any $z\in \ID$ we have
$$|\psi(z)-\psi_n(z)|<Cn^{-\beta}.$$
\end{lemma}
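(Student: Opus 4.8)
The plan is to factor the comparison through the unit disc and reduce everything to a quantitative stability estimate for the Riemann map under a perturbation confined to a thin collar of the boundary. After translating we may assume $w_0=0$, so $\psi(0)=\psi_n(0)=0$ with positive derivatives; set $R:=\psi'(0)$. Since $D$ is an $\alpha$-H\"older domain, $\psi$ extends to $\overline{\ID}$ with $|\psi(z)-\psi(w)|\le C_\psi|z-w|^{\alpha}$, and since $D_n$ is a polygonal Jordan domain contained in $D$, the image $\Omega_n:=\psi^{-1}(D_n)$ is a Jordan subdomain of $\ID$ containing $0$. By uniqueness of the Riemann map, $\psi_n=\psi\circ h_n$ where $h_n:=\psi^{-1}\circ\psi_n:\ID\to\Omega_n$ is conformal with $h_n(0)=0$, $h_n'(0)>0$; hence for $z\in\overline{\ID}$,
\[
 |\psi(z)-\psi_n(z)|=|\psi(z)-\psi(h_n(z))|\le C_\psi\,\|h_n-\mathrm{id}\|_{\infty,\ID}^{\alpha},
\]
so it suffices to prove $\|h_n-\mathrm{id}\|_{\infty,\ID}\le Cn^{-\gamma}$ for some $\gamma>0$.

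First I would locate $\Omega_n$ between two concentric discs. By the Koebe distortion theorem, $\dist(\psi(z),\partial D)\ge\tfrac14(1-|z|^{2})|\psi'(z)|\ge\tfrac{R}{32}(1-|z|)^{2}$ for $z\in\ID$; on the other hand, by construction of the lattice approximation there is $C_0=C_0(L)$ such that every connected subset of $\{w\in D:\dist(w,\partial D)>C_0/n\}$ containing $w_0$ lies in $D_n$. Taking $r_n:=\sqrt{32C_0/(Rn)}\asymp n^{-1/2}$, the set $\psi(\{|z|\le 1-r_n\})$ is such a connected set, so
\[
 \{|z|\le 1-r_n\}\ \subseteq\ \Omega_n\ \subseteq\ \ID ;
\]
this requires $n\ge N$, with $N$ depending on $R$ and the lattice (it also ensures $0\in D_n$).

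It then remains to prove the stability estimate $\|h_n-\mathrm{id}\|_{\infty,\ID}\le C r_n^{\,c_1}$ for some $c_1=c_1(\alpha)>0$, which finishes the proof with $\beta=\alpha c_1/2$. On $\{|z|\le 1-\sqrt{r_n}\}$ this is elementary: from $|h_n(z)|\le|z|$ (Schwarz) and $|h_n(z)|\ge(1-r_n)|z|$ for $|z|<1-r_n$ (Schwarz applied to $w\mapsto h_n^{-1}((1-r_n)w)$), the function $\log(h_n(z)/z)$ is holomorphic with real part in $[\log(1-r_n),0]$ on $\{|z|\le 1-r_n\}$ and vanishing imaginary part at $0$, so the Borel--Carath\'eodory estimate gives $|h_n(z)-z|\le Cr_n^{1/2}$ there. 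For $z$ in the collar $1-\sqrt{r_n}<|z|<1$ one passes to the boundary: $h_n$ extends to a homeomorphism of $\overline{\ID}$ onto $\overline{\Omega_n}$ taking $\partial\ID$ onto $\partial\Omega_n\subseteq\{1-r_n\le|w|<1\}$, so it is enough to bound $\diam\, h_n\big(\overline{B(z^{*},2\sqrt{r_n})}\cap\ID\big)$ for $z^{*}\in\partial\ID$. Here I would apply Wolff's lemma (Lemma \ref{distortconf}) to $h_n$ with centre $z^{*}$ to produce a crosscut of $\ID$ near $z^{*}$ whose image is a crosscut of $\Omega_n$ of small diameter separating $h_n(z^{*})$ from $0$, and then control the diameter of the region it cuts off using that the complementary arc of $\partial\Omega_n=\psi^{-1}(\partial D_n)$ is short there --- this being the step where the $\alpha$-H\"older hypothesis (equivalently, power decay of the harmonic measure of boundary balls of $D$, via Beurling projection) is used to upgrade the logarithmic gain of Wolff's lemma to a polynomial one. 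Combining the two regimes yields the claimed power bound, and tracking constants shows $C,\beta,N$ depend only on the lattice, $\alpha$, $C_\psi$, and $R=\psi'(0)$.

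The main obstacle will be precisely this last, boundary, estimate: carrying the easy interior bound on $h_n-\mathrm{id}$ up to $\partial\ID$ with a genuine power rate. The difficulty is that although $\Omega_n$ is trapped between $(1-r_n)\ID$ and $\ID$, as the $\psi^{-1}$-image of a lattice polygon exhausting the deep interior of $D$ it may a priori have thin fingers on scales much finer than $r_n$, so a purely geometric ``sandwich'' argument only gives a logarithmic rate; the H\"older regularity of $\psi$ is exactly what forces $D\setminus D_n$ (which lies in the $O(1/n)$-collar of $\partial D$) to perturb the conformal map by no more than a power of $1/n$, uniformly up to the boundary.
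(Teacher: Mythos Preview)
Your reduction is exactly the paper's: set $\Omega_n=\psi^{-1}(D_n)$, write $\psi_n=\psi\circ h_n$ with $h_n:\ID\to\Omega_n$, and use the H\"older bound on $\psi$ to reduce to $\|h_n-\mathrm{id}\|_{\infty,\ID}\le n^{-\gamma}$. Your Koebe argument for $\{|z|\le 1-r_n\}\subset\Omega_n$ with $r_n\asymp n^{-1/2}$ is equivalent to the paper's Beurling step placing $\partial\Omega_n$ in the annulus $\{1-C_2/\sqrt n\le|\zeta|\le1\}$, and your Schwarz/Borel--Carath\'eodory interior estimate is correct.

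The gap is in the boundary step. A single application of Wolff's lemma to $h_n$ at $z^*\in\partial\ID$ produces an image crosscut of $\Omega_n$ of diameter at most $2\pi/\sqrt{\log(1/\rho)}$; with $\rho$ on the scale $\sqrt{r_n}\asymp n^{-1/4}$ this is only $\asymp(\log n)^{-1/2}$. Even after you bound the complementary boundary arc polynomially (which is indeed where H\"older\,+\,Beurling enter), the Wolff crosscut term still dominates and you get a $(\log n)^{-c}$ rate, not a power of $n$. In other words, the arc--diameter modulus alone cannot ``upgrade'' a single length--area crosscut from logarithmic to polynomial.

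The paper handles this by citing Marchenko's lemma (Warschawski): if $\partial\Omega_n$ lies in $\{1-\varepsilon\le|\zeta|\le1\}$ and any two boundary points within $\varepsilon$ can be joined by a boundary arc of diameter $\le\lambda$, then $|h_n(z)-z|\le C_3\,\varepsilon\log(1/\varepsilon)+C_4\,\lambda$ uniformly on $\overline{\ID}$. The H\"older hypothesis is used, exactly as you suggest, to verify the second input: for $|\zeta_1-\zeta_2|\le\varepsilon$ one has $|\psi(\zeta_1)-\psi(\zeta_2)|\le C_\psi\varepsilon^{\alpha}$ and both $\psi(\zeta_i)$ lie within $C_1/n$ of $\partial D$, so $\psi(\Gamma_0)$ is separated from $\psi(0)$ by a crosscut of length $\lesssim n^{-\alpha/2}$, and Beurling pulls this back to $\diam\Gamma_0\le C n^{-\alpha/4}$. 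Plugging $\varepsilon\asymp n^{-1/2}$ and $\lambda\asymp n^{-\alpha/4}$ into Marchenko gives the polynomial bound immediately. So your plan is right up to the point where you invoke Wolff; what you actually need there is Marchenko's lemma (or an equivalent boundary--correspondence argument), not a single length--area estimate.
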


\begin{remark}
The same estimate is true for the maps $\psi$ and $\psi_n$ normalized to map $\{-1,\,1\}$ to $A, \,B\in\partial D$ and their discrete approximations $A_n, \,B_n\in\partial D_n$ correspondingly, provided that they are additionally normalized to converge polynomially fast at $0$.
\end{remark}

\begin{remark}
 One can actually show that the polynomial convergence of the approximating maps $\psi_n$ to $\psi$ up to the boundary implies that $\psi$ is $\alpha$-H\"older for some $\alpha$.
\end{remark}

\begin{proof}
Let
$$\Omega_n:=\psi^{-1}(D_n)\subset\ID.$$
Let also $\rho_n$ be the confomal map of $\ID$  onto $\Omega_n$ with $\rho_n(0)=0$, $\rho'_n(0)>0$. Thus $\psi_n=\psi\circ\rho_n$. Since the map $\psi$ it self is H\"older, it is enough to check that
\begin{equation}\label{eq:approx}
    |\rho_n(z)-z|\leq n^{-\gamma}\text{ for some }\gamma>0.
\end{equation}
Observe that any point of $\partial D_n$ is at distance at most $C_1\frac1n$ from the boundary of $\partial D$, where $C_1$ depends only on the lattice. Thus, by the classical Beurling estimate (see for example Theorem III in \cite{Wars}), for any $\zeta\in\partial\Omega_n$, we have $|\zeta|>1-\frac{C_2}{\sqrt{n}}$.

To finish the proof, we will apply a restatement of another classical result, a lemma of Marchenko (see, for example, \cite{Wars}, Section 3 and Theorem IV):

\begin{lemma}[Marchenko Lemma]
Let $\Gamma$ be a closed Jordan curve which lies in the ring $1-\varepsilon\leq|\zeta|\leq1$. Let $\lambda=\lambda(\varepsilon)$ have the property that any two points  $\zeta_1, \zeta_2\in\Gamma$ with $|\zeta_1-\zeta_2|\leq\varepsilon$ can be connected by an arc of $\Gamma$ of diameter at most $\lambda$.

Let $\rho$ be the conformal map of $\ID$ onto the interior of $\Gamma$ with $\rho(0)=0$ and $\rho'(0)>0$. Then
$$ |\rho(z)-z|\leq C_3\varepsilon\log\frac{1}{\varepsilon}+C_4\lambda,$$
where $C_3$ and $C_4$ are some absolute constants.
\end{lemma}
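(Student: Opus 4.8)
The plan is to reduce, via Carath\'eodory's theorem and the maximum modulus principle, to an estimate of the angular distortion of the boundary correspondence of $\rho$, and then to control that distortion by harmonic measure, using the $\lambda$-condition together with Wolff's Lemma~\ref{distortconf}. To set up the reductions: since $\Gamma$ is a Jordan curve, $\rho$ extends to a homeomorphism of $\overline\ID$ onto $\overline{\Int(\Gamma)}$, so $\rho-\mathrm{id}$ is holomorphic in $\ID$ and continuous on $\overline\ID$, whence $\sup_{\overline\ID}|\rho(z)-z|=\max_\theta|\rho(e^{i\theta})-e^{i\theta}|$ by the maximum modulus principle. Writing $\rho(e^{i\theta})=R(\theta)e^{i\Theta(\theta)}$, the hypothesis $\Gamma\subset\{1-\varepsilon\le|\zeta|\le1\}$ gives $|R(\theta)-1|\le\varepsilon$, so $|\rho(e^{i\theta})-e^{i\theta}|\le\varepsilon+|\Theta(\theta)-\theta|$ and it suffices to bound the angular displacement $|\Theta(\theta)-\theta|$ by $C_3\varepsilon\log\tfrac1\varepsilon+C_4\lambda$. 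Also $\{|\zeta|<1-\varepsilon\}\subset\Int(\Gamma)\subset\ID$, so applying the Schwarz lemma to $\rho$ and to $w\mapsto(1-\varepsilon)^{-1}\rho^{-1}((1-\varepsilon)w)$ gives $1-\varepsilon\le\rho'(0)\le1$; the positivity of $\rho'(0)$ is what will forbid a constant angular shift.

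Next I would set out the analytic skeleton. Put $\Psi(z)=\log(\rho(z)/z)$, holomorphic in $\ID$ since the only zero of $\rho$ in $\ID$ is a simple zero at the origin, with $\Psi(0)=\log\rho'(0)\in\IR$. On $\partial\ID$ one has $\Re\Psi(e^{i\theta})=\log|\rho(e^{i\theta})|\in[\log(1-\varepsilon),0]$, so $|\Re\Psi|\le 2\varepsilon$ throughout $\overline\ID$, while $\Im\Psi(e^{i\theta})=\Theta(\theta)-\theta$ is the boundary value of the harmonic conjugate of $\Re\Psi$ normalized by $\Im\Psi(0)=0$. Using $\int_0^{2\pi}\bigl|\Im\tfrac{e^{i\phi}+z}{e^{i\phi}-z}\bigr|\,d\phi\lesssim\log\tfrac1{1-|z|}$ in the conjugate-Poisson representation gives $|\Im\Psi(z)|\lesssim\varepsilon\log\tfrac1{1-|z|}\lesssim\varepsilon\log\tfrac1\varepsilon$ for $|z|\le 1-\varepsilon$, and since $\rho(z)-z=z(e^{\Psi(z)}-1)$ and $|e^{\Psi(z)}-1|\lesssim|\Psi(z)|$ once $|\Psi(z)|$ is small, the desired bound (with no use of $\lambda$) follows for $|z|\le 1-\varepsilon$. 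What remains is the boundary angular displacement $\Theta(\theta_0)-\theta_0$; for this, the crude estimate of $\Im\Psi(e^{i\theta_0})$ by the $dt/t$-integral of a modulus of continuity of $\theta\mapsto\log|\rho(e^{i\theta})|$ is not enough, and the geometry of $\Gamma$ must enter.

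The crux is the estimate of $\Theta(\theta_0)-\theta_0$ on $\partial\ID$. I would use that $\rho$ transports harmonic measure from the origin: $\tfrac{\phi_2-\phi_1}{2\pi}$ equals the harmonic measure at $0$, relative to $\Int(\Gamma)$, of the sub-arc of $\Gamma$ between $\rho(e^{i\phi_1})$ and $\rho(e^{i\phi_2})$, and compare this with the harmonic measure, seen from $0$, of the angular shadow of that $\Gamma$-arc on $\partial\ID$. The $\lambda$-condition enters in two ways: two points of $\Gamma$ with equal argument lie within $\varepsilon$ of each other, hence are joined by a sub-arc of $\Gamma$ of diameter $\le\lambda$, so backtracking excursions of $\Gamma$ are short and $\Gamma$ is monotone in argument up to an error $O(\lambda)$ — making the shadow essentially the arc $[\arg\rho(e^{i\phi_1}),\arg\rho(e^{i\phi_2})]$; and a crosscut of $\Int(\Gamma)$ of small diameter $\delta$ that separates a boundary point from $0$ cuts off a subregion which avoids $\{|\zeta|<1-\varepsilon\}$, hence lies in the $\varepsilon$-annulus and (not being able to wind around $0$) has diameter $\le C(\delta+\varepsilon)$, improved to $C(\varepsilon+\lambda)$ when $\delta\le\varepsilon$ — the ``no deep fjords'' property. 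Feeding this geometry into Wolff's Lemma~\ref{distortconf} applied near a boundary point, and into the comparison of harmonic measure in $\Int(\Gamma)$ with harmonic measure in $\ID$ along the thin annulus, should produce a discrepancy of only $O(\varepsilon\log\tfrac1\varepsilon)$ (from the Poisson-kernel tails of a point $\varepsilon$-deep in the annulus) plus $O(\lambda)$; choosing $\phi_1,\phi_2$ around $\theta_0$ then yields $|\Theta(\theta_0)-\theta_0|\le C_3\varepsilon\log\tfrac1\varepsilon+C_4\lambda$, which together with the previous two paragraphs completes the bound $|\rho(z)-z|\le C_3\varepsilon\log\tfrac1\varepsilon+C_4\lambda$ on $\overline\ID$.

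The main obstacle is making the harmonic-measure comparison of the third paragraph quantitative with exactly the stated dependence — this is essentially the classical content behind the cited sources (the Warschawski--Marchenko estimates for nearly circular regions). The delicate points I expect to fight with are: a generic conformal map into a bounded domain has only a $(\log\tfrac1t)^{-1/2}$ boundary modulus of continuity, so one must genuinely exploit the annular confinement (small crosscuts cutting off small subregions) to do better; the ``monotone in argument up to $O(\lambda)$'' claim must be justified carefully, since $\arg$ need not be monotone along $\Gamma$; and the interior estimate (valid for $1-|z|\ge\varepsilon$) must be matched to the boundary estimate across the collar $1-\varepsilon<|z|<1$.
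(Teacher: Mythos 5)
The paper does not prove this lemma: it is stated as a classical result and cited to Warschawski \cite{Wars} (Section~3 and Theorem~IV), so there is no in-paper argument against which to compare.

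Your preliminary reductions are sound and standard: by Carath\'eodory, $\rho$ extends to a homeomorphism of $\overline{\ID}$; the maximum principle reduces the estimate to $\partial\ID$; the radial part of the boundary error is at most $\varepsilon$; and the argument displacement $\Theta(\theta)-\theta$ is the boundary trace of $\Im\Psi$ for $\Psi=\log(\rho/z)$, with $|\Re\Psi|\lesssim\varepsilon$ on $\overline{\ID}$. (A minor slip: the second Schwarz--lemma application should be to $w\mapsto\rho^{-1}((1-\varepsilon)w)$, which maps $\ID$ into $\ID$; the map $(1-\varepsilon)^{-1}\rho^{-1}((1-\varepsilon)w)$ you wrote does not.) The genuine gap, which you acknowledge explicitly, is that the boundary estimate $|\Theta(\theta_0)-\theta_0|\leq C_3\varepsilon\log\frac{1}{\varepsilon}+C_4\lambda$ is never actually established. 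That step is essentially the entire content of the lemma, and your third paragraph is a proposal for how to prove it rather than a proof: the assertions that $\Gamma$ is ``monotone in argument up to $O(\lambda)$'', that a crosscut of diameter $\delta\leq\varepsilon$ separating a boundary point from $0$ cuts off a subregion of diameter $O(\varepsilon+\lambda)$, and especially that the harmonic-measure discrepancy between $\Int(\Gamma)$ and $\ID$ along the thin annulus is only $O(\varepsilon\log\frac{1}{\varepsilon})$, are stated without justification. Moreover, converting a local harmonic-measure comparison of arcs into a pointwise bound on $\Theta(\theta_0)-\theta_0$ requires carrying the normalization $\Im\Psi(0)=0$ out to the boundary through the collar $\{1-\varepsilon<|z|<1\}$, which is precisely where the interior conjugate-Poisson estimate degrades; the two regimes cannot simply be patched. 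As it stands the proposal contains plausible ingredients (in the same circle of ideas as Warschawski's proof) but leaves the central quantitative estimate unproved.
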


Let us show that for the curve $\partial\Omega_n$
\begin{equation}\label{eq:distort}
 \lambda\left(\frac{C_2}{\sqrt{n}}\right)\leq n^{-\eta}
\end{equation}
 for some $\eta>0$. Indeed, let $\Gamma_0:=\Gamma_{[\zeta_1,\zeta_2]}$ be an arc of $\Gamma$ with endpoints $\zeta_1,$ $\zeta_2$ with $|\zeta_1-\zeta_2|\leq\frac{C_2}{\sqrt{n}}$. We assume that $\Gamma_0$ is the shorter of two such arcs. Then, since $\psi(\zeta_1)$ and $\psi(\zeta_2)$ are $\frac{C_1}{n}$-close to $\partial D$,
$\psi(\Gamma_0)$ is separated from $\psi(0)$ by some crosscut of the length at most $$2\frac{C_1}n+C_5n^{-\alpha/2}$$
where $C_5$ depends only on the $\alpha$-H\"older norm of $\psi$. Applying Beurling Lemma again we see that $\diam \Gamma_0\leq C_6n^{-\alpha/4}$, which implies \eqref{eq:distort}.

Combining Marchenko Lemma and \eqref{eq:approx} gives us our result.

 \end{proof}
 \subsection{Proof of Main Theorem}
  The proof now follows almost directly from the proof of Theorem 4.3 in \cite{V}.

 \begin{proof}[Proof of Theorem \eqref{mainthm}]
 For $\kappa \in (0,8)$, let $\gamma$ be the chordal SLE$_\kappa$ path in $\IH$ corresponding to the Brownian motion in Theorem \eqref{drivingcvg}.
 Hence, there is a coupling of chordal SLE$_\kappa$ path and the image of the interface path $\tilde\gamma_n = \varphi_n(\gamma_n)$.
 The goal is to estimate the distance between these curves in this coupling.
 Take $s\in (0,1)$ and $n>n_0$ where $n_0$ is as in Theorem \eqref{drivingcvg}.
 Fix $\rho>1$ and for $p\in(0,1/\rho)$, let
 \[
  \epsilon_n=n^{-s} \;\;\; d_n=(\epsilon_n)^p.
 \]
For each $n \geq n_0$, define three events each of which we have seen occur with large probability in our coupling. On the intersection of these events, we can apply the estimate from Lemma \eqref{Vmain}.

\begin{enumerate}

 \item Let $\cA_n=\cA_n(s)$ be the event that the estimate
 \[
  \sup_{t\in[0,T]} \left|W_n(t) - W(t) \right| \leq \epsilon_n
 \]
 holds.
 By Theorem \eqref{drivingcvg} we know that there exists $n_0<\infty$ such that if $n\geq n_0$ then
 \[
  \IP(\cA_n) \geq 1 - \epsilon_n.
 \]

\item For $\beta\in(\beta_+,1)$ where $\beta_+$ is as in Proposition \eqref{derest1}, let $\cB_n =\cB_n(s,\beta,T,c_B)$ be the event that the chordal $SLE_\kappa$ reverse Loewner chain $(f_t)$ driven by $W(t)$ satisfies the estimate
\[
 \sup_{t\in[0,\widetilde{T}]} d~\left|f'(t,W(t)+id)\right| \leq c_Bd^{1-\beta} \;\;\;\;\; \forall~d\leq d_n.
\]
where $\widetilde{T}\leq T$ is the (stopping) time defined in Proposition \eqref{derest1} (and Proposition \eqref{A.5}, for radial case).

 Then by Proposition \eqref{derest1} (\eqref{A.5} respectively) there exists $c'_B<\infty$, independent of $n$, and $n_1<\infty$ such that if $n\geq n_1$ then
 \[
  \IP(\cB_n)\geq 1 - c'_Bd_n^q
 \]
where $q < q_\kappa(\beta)=\min\left\{\beta\left(1 + \frac{2}{\kappa}+\frac{3\kappa}{32}\right), \beta + \frac{2(1+\beta)}{\kappa}+\frac{\beta^2\kappa}{8(1+\beta)} -2 \right\}$.

 \item Let $\gamma_{\ID}$ be $\gamma_n$ transformed to $\Sigma_{\ID}$. For $r\in \left(0, \frac{\Delta}{\Delta + 4(1+\epsilon)}\right)$, let $\cC_n = \cC_n(s,r,p,\epsilon, c_\cC)$ be the event that the tip structure modulus for $\gamma_{\ID},~t\in[0,T]$, in $\ID,~\eta_{\mathrm{tip}}^{(n)}$, satisfies
 \[
  \eta_{\mathrm{tip}}^{(n)}(d_n) \leq c_\cC d_n^r.
 \]
 We know from Proposition \eqref{tip} that there exists $C,c < \infty$, independent of $n$, and $n_2<\infty$ such that if $n\geq n_2$ then
 \[
  \IP(\cC_n)\geq 1 - c d_n^{\alpha(r)}.
 \]
 Notice that by Proposition \eqref{boundtotip} we get $|\tilde{\gamma_n}(t) - g_t^{-1}(W_n(t)-id_n)|\leq C \eta_{\mathrm{tip}}^{(n)}(d_n).$

\end{enumerate}

Now we will look at intersection of these events. Thus, there exist $c_B,c_C<\infty$ and $c<\infty$, all independent of $n$ (but dependent on $s,r,p,T,\beta,\epsilon$), such that for all $n$ sufficiently large
\begin{align*}
 \IP(\cA_n \cap \cB_n \cap \cC_n) & \geq 1 - c(\epsilon_n + d_n^q + d_n^{\alpha(r)})
 \end{align*}
and on the event $\cA_n \cap \cB_n \cap \cC_n$ we can apply Lemma \eqref{Vmain} with constants $c=c_C$ and $c'=c_B$, independent of $n$, to see that there exists $c''<\infty$ independent of $n$ such that for all $n$ sufficiently large
\[
 \sup_{t\in[0,\widetilde{T}]} |\tilde\gamma_n(t) - \tilde\gamma(t)| \leq c'' (d_n^{r(1-\beta)}+\epsilon_n^{(1-\rho p)r}).
\]
Since $d_n = \epsilon_n^p$, one can see that $d_n^{r(1-\beta)}$ dominates when $p\in(0,1/(1+\rho-\beta)]$ and $\epsilon_n^{(1-\rho p)r}$ whenever $p\in [1/(1+\rho-\beta), 1]$. Suppose $p\in(0,1/(1+\rho-\beta)]$. Set
\[
 \mu(\beta,r) = \min \left\lbrace r(1-\beta), q(\beta), \alpha(r), (1-r)\Delta-4r(1+\epsilon)  \right\rbrace
\]
The optimal rate would be given by optimizing $\mu$ over $\beta, r$ and then choosing $p$ very close to $1/(1+\rho-\beta)$ as choosing $p$ in $[1/(1+\rho-\beta),1]$ will not provide any better. Set
\[
 \mu(\beta_*,r_*) = \max \left\{ \mu(\beta,r) : \beta_+ < \beta < 1, 0 < r < \Delta/(\Delta + 4(1+\epsilon)) \right\}
\]
Consequently,
\[
 \IP\left(\sup_{t\in[0,\widetilde{T}]} |\tilde\gamma_n(t) - \gamma(t)| > \epsilon_n^m \right) < \epsilon_n^m
\]
where $m < m_* = \frac{\mu(r_*,\beta_*)}{2-\beta_*}$

Now assume that $D$ is a H\"older domain. Let $\psi:\ID\mapsto D$ be the corresponding Riemann map, and  $\psi_n:\ID\mapsto D_n$ be the map to the discrete lattice approximation. Observe that by Lemma \eqref{lem:discrete},
$\gamma_n=\psi_n(\tilde\gamma_n(t))$ is $n^{-\beta}$ close to $\psi(\tilde\gamma_n(t))$ for some $\beta>0$. But since $\psi$ itself is H\"older, $\psi(\tilde\gamma_n(t))$ is $n^{-\beta_1}$ close to $\psi(\gamma)$, which is the SLE curve in $D$. This concludes the proof of the last assertion.
\end{proof}

\section{Applications}
\label{applications}
In this section, we will apply the framework to the modified bond percolation model described in \cite[\S 2.2]{CL06} which includes the triangular site percolation problem described in \cite{SMIRNOV2001239}.

\subsection{Percolation}
\label{percolation}

Let $\Omega\subset\IC$ be a simply connected domain (whose boundary is a simple closed curve). Let $a$ and $c$ be two distinct points on $\partial \Omega$ or prime ends, if necessary, which separate it into a curve $c_1$ going from $a$ to $c$ and $c_2$ going from $c$ to $a$ such that $\partial \Omega \backslash\{a,c\}=c_1\cup c_2$ and impose boundary conditions so that $c_1$ is coloured blue and the complementary portion, $c_2$ is coloured yellow. Consider the percolation model described as follows, for more details see \cite[\S 2.2]{CL06}.

\noindent \textsc{Review of the model.}
\smallskip

Consider the hexagon tiling of the 2D triangular site lattice. A hexagon can be coloured blue, yellow, or in specific cases split. The model at hand depends on particular local arrangements of hexagons.

\begin{definition}
A \textit{flower} is the union of a particular hexagon with its six neighbors. The central hexagon in each flower is called an \textit{iris} and the outer hexagons are called \textit{petals}.  All hexagons which are not flowers are called \textit{fillers}.
Consider a simply connected domain $\Omega\subset \IC$ tiled by hexagons. A \textit{floral arrangement}, denoted by $\Omega_{\cF}$, is a designation of certain hexagons as irises. The irises satisfy three criteria: (i) no iris is a boundary hexagon, (ii) there are at least two non–iris hexagons between each pair of irises, and (iii) in infinite volume, the irises have a periodic structure with $60^{\circ}$ symmetries.
\end{definition}

\begin{figure}
\centering
\begin{subfigure}{0.4\textwidth}
  \centering
  \includegraphics[width=.6\linewidth]{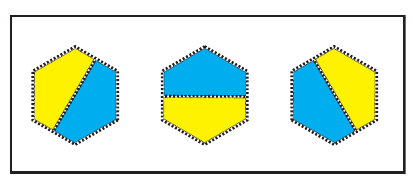}
  \label{fig:mixedstates}
\end{subfigure}%
\begin{subfigure}{.6\textwidth}
  \centering
  \includegraphics[width=\linewidth]{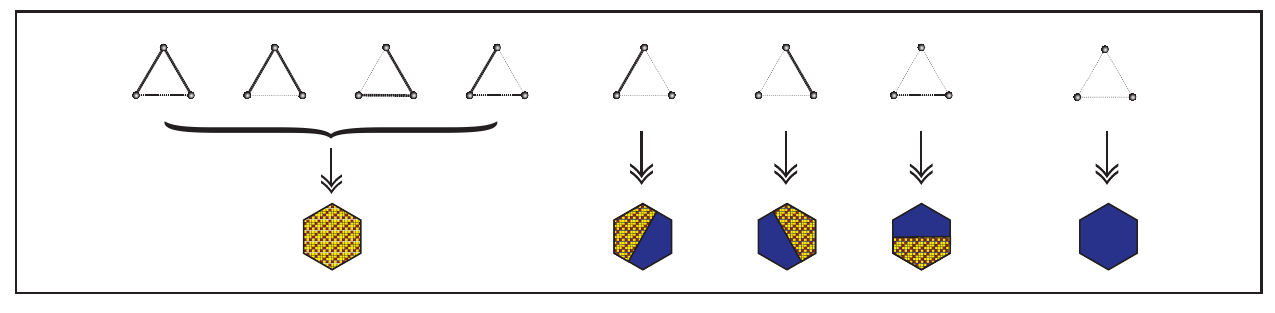}
  \label{fig:sboccupanyevents}
\end{subfigure}
\caption{The three allowed mixed states of the hexagons corresponding to single-bond occupancy events. 
}
\label{fig:test}
\end{figure}

Now, the general description of the model is as follows.

\begin{definition}
Let $\Omega$ be a domain with floral arrangement $\Omega_{\cF}$.
\begin{itemize}
    \item Any background filler sites, as well as the petal sites, are yellow or blue with probability $\frac{1}{2}$.
    \item In ``most" configurations of the petals, the iris can be in one of five states: yellow, blue, or three mixed states: horizontal split, $120^{\circ}$ split, and $60^{\circ}$ split with probability $a$, $a$, or $s$ so that $2a + 3s = 1$ and $a^2\geq 2s^2$.
    \item The exceptional configurations, called $\textit{triggers}$ are configurations where there are three yellow petals and three blue petals with exactly one pair of yellow (and hence one pair of blue) petals contiguous. Under these circumstances, the iris is restricted to a pure form, i.e.,blue or yellow with probability $\frac{1}{2}$.
\end{itemize}

\end{definition}

\begin{remark} The only source of (local) correlation in this model is triggering. All petal arrangements are independent, all flowers are configured independently, and these in turn are independent of the background filler sites.
\end{remark}

\noindent Notice that if we take $s=0$ in this one-parameter family of models we are reduced to the site percolation on the triangular lattice. This model is shown to exhibit all the typical properties of the percolation model at criticality, see \cite[Theorem 3.10]{CL06}. As well as, the verification of Cardy's formula for this model, see \cite[Theorem 2.4]{CL06}.

\noindent\textsc{Exploration Process}
\smallskip

Given $\Omega$ as above. Consider a hexagon $\epsilon-$tiling of $\IC$ and assume that the location of all irises/flowers/fillers are predetermined. Let $\Omega_\epsilon$ be the union of all fillers and flowers whose closure lies in the interior of $\Omega$ where $\epsilon$ is small enough so that both $a$ and $c$ are in the same lattice connected component. The boundary $\partial \Omega_\epsilon$ is the usual internal lattice boundary where if it cuts through a flower, the entire flower is included as part of the boundary.

\noindent \textit{Admissible domains} $(\Omega_\epsilon, \partial \Omega_\epsilon, a_\epsilon, c_\epsilon)$ satisfy the following properties:
\begin{itemize}
    \item $\Omega_\epsilon$ contains no partial flowers.
    \item $\partial \Omega_\epsilon$ can be decomposed into two lattice connected sets, $c_1$ and $c_2$, which consists of hexagons and/or halves of boundary irises, one coloured blue and one coloured yellow such that $a_\epsilon$ and $c_\epsilon$ lie at the points where $c_1$ and $c_2$ join and such that the blue and yellow paths are valid paths following the connectivity and statistical rules of the model; in particular, the coloring of these paths do not lead to flower configurations that have probability zero.
    \item $a_\epsilon$ and $c_\epsilon$ lie at the vertices of hexagons, such that of the three hexagons sharing the vertex, one of them is blue, one of them is yellow, and the third is in the interior of the domain.
\end{itemize}

\begin{remark}
One can see that $(\Omega_\epsilon,\partial \Omega_\epsilon, a_\epsilon, c_\epsilon) $ converges to $(\Omega, \partial \Omega, a, c)$ in the Caratheodory sense and one can find $a_\epsilon, c_\epsilon$ which converge to $a,c$ as $\epsilon\rightarrow 0$.
\end{remark}

\begin{figure}
\centering
\includegraphics[width=\linewidth]{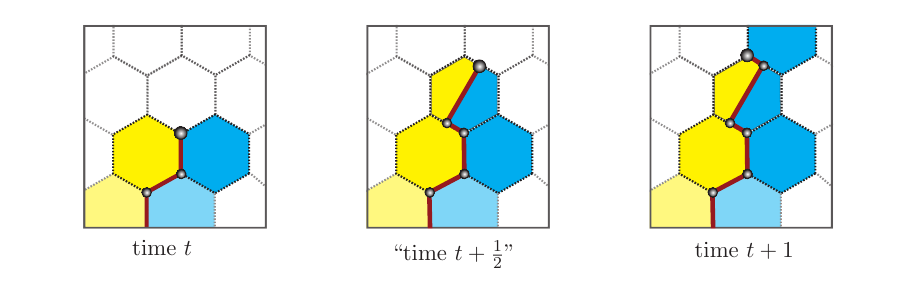}
\caption{The multistep procedure by which the Exploration Process goes through a mixed hexagon. \textit{Figures courtesy of Binder, Chayes and Lei.}}
\label{fig:exploration}
\end{figure}

Let us now define the Exploration Process $\IX_t^\epsilon$. Let $\IX_0^\epsilon=a_\epsilon$. Colour the new interior hexagons in order to determine the next step of the Exploration Process according to the following rules:
\begin{itemize}
    \item If the hexagon is a filler, colour blue or yellow with probability $\frac{1}{2}$.
    \item If the hexagon is a petal or iris, colour blue, yellow or mixed with the conditional distribution given by the hexagons of the flower already determined.
    \item If another petal needs to be uncovered, colour it according to the conditional distribution given by the iris and the other hexagons of the flower which have already been determined.
\end{itemize}

The Exploration Process $\IX_t^\epsilon$ is determined from $\IX_{t-1}^\epsilon$ as follows:
\begin{itemize}
    \item If $\IX_{t-1}^\epsilon$ is not adjacent to an iris, then  $\IX_t^\epsilon$ is equal to the next hexagon vertex entered when blue is always to the right of the segment $[ \IX_{t-1}^\epsilon, \IX_t^\epsilon]$.
    \item If  $\IX_{t-1}^\epsilon$ is adjacent to an iris, then the colour of the iris is determined by the above rules. The exploration path then continues by keeping blue to the right until a petal is hit. The colour of this petal is determined according to the proper conditional distribution and $\IX_t^\epsilon$ is one of the two possible vertices common to the iris and the new petal which keeps the blue region to the right of the final portion of the segments joining $\IX_{t-1}^\epsilon$ and $\IX_t^\epsilon$.
\end{itemize}

Thus, at each step, we arrive at a vertex of a hexagon. For this Exploration Process we still maintain the following properties.

\begin{proposition}[Proposition 4.3, \cite{SLEI}]\label{admiss}
Let $\gamma_\epsilon([0,t])$ be the line segments formed by the process up until time $t$, and $\Gamma_\epsilon([0,t])$ be the hexagons revealed by the Exploration Process. Let $\partial\Omega_\epsilon^t = \partial\Omega_\epsilon\cup\Gamma_\epsilon([0,t])$ and let $\Omega_\epsilon^t= \Omega_\epsilon\backslash\Gamma_\epsilon([0,t]).$ Then, the quadruple $(\Omega_\epsilon^t,\partial \Omega_\epsilon^t, \IX_\epsilon^t, c_\epsilon) $ is admissible. Furthermore, the Exploration Process in $\Omega_\epsilon^t$ from $\IX_\epsilon^t \text{ to } c_\epsilon$ has the same law as the original Exploration Process from $a_\epsilon$ to $c_\epsilon$ in $\Omega_\epsilon$ conditioned on $\Gamma_\epsilon([0,t]).$
\end{proposition}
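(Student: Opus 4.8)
The plan is to recognize Proposition \eqref{admiss} as the domain Markov property for the Exploration Process and to prove it in two stages: first showing that the truncated quadruple $(\Omega_\epsilon^t,\partial\Omega_\epsilon^t,\IX_\epsilon^t,c_\epsilon)$ is again admissible, and then identifying the conditional law of the remaining path with that of a fresh Exploration Process on this quadruple. Throughout, the crucial structural fact is that the Exploration Process treats each flower as an atomic unit: the moment the path meets any hexagon of a flower, the whole flower --- iris together with all six petals --- is revealed and placed into $\Gamma_\epsilon([0,t])$. This is forced by the colouring rules (the conditional law of a petal or an iris always refers to the entire flower) and is consistent with the convention, built into the definition of $\partial\Omega_\epsilon$, that a flower cut by the boundary is included in full. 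Consequently $\Gamma_\epsilon([0,t])$ is a union of whole fillers and whole flowers.

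For admissibility I would verify the three defining properties in turn. The \emph{no partial flowers} condition is immediate from atomicity: $\Omega_\epsilon^t=\Omega_\epsilon\setminus\Gamma_\epsilon([0,t])$ is a union of whole cells of $\Omega_\epsilon$. For the \emph{boundary decomposition}, recall that the process is built so that hexagons immediately to the right of $\gamma_\epsilon([0,t])$ are blue and those immediately to the left are yellow; splicing the right-hand revealed hexagons onto $c_1$ produces a lattice-connected blue arc from $\IX_\epsilon^t$ to $c_\epsilon$, and splicing the left-hand ones onto $c_2$ produces a lattice-connected yellow arc from $c_\epsilon$ to $\IX_\epsilon^t$. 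These arcs are statistically valid --- they create no probability-zero flower configuration --- because every revealed hexagon was coloured using the genuine conditional distributions of the model, so the configuration restricted to $\partial\Omega_\epsilon^t$ has positive probability. For the \emph{marked-vertex condition}, one inspects a single step of the process (Figure \ref{fig:exploration}): the update rule colours exactly the hexagons needed to decide the next increment, and at the new tip $\IX_\epsilon^t$ the three incident hexagons are one blue (to the right), one yellow (to the left), and one undetermined interior hexagon; the vertex $c_\epsilon$ keeps its structure since nothing near it has been revealed.

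For the conditional law, I would use that all randomness of the model is carried by an independent family --- the filler colours (fair, i.i.d.) and the flower configurations (six fair independent petals, then the iris drawn from its conditional law given the petals via the trigger and mixed-state rules), with distinct flowers and fillers mutually independent. By atomicity, $\Gamma_\epsilon([0,t])$ is a stopping set consisting of finitely many whole fillers and whole flowers, and the colours assigned to them by the process are a sample from their joint law. Here one must check that the \emph{sequential} revelation carried out by the path --- petals uncovered one at a time as needed, then the iris, then possibly further petals, including the multistep passage through a mixed iris shown in Figure \ref{fig:exploration} --- reproduces exactly the prescribed joint distribution of that flower; this is a finite consistency check on the conditional-probability rules, and it is the only place where the trigger correlations intervene. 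Granting it, conditioning on $\Gamma_\epsilon([0,t])$ leaves every filler and flower in $\Omega_\epsilon^t$ with its original, unconditioned distribution, by independence, so running the same colouring and stepping rules started from $\IX_\epsilon^t$ with blue to the right is, by definition, the Exploration Process on $(\Omega_\epsilon^t,\partial\Omega_\epsilon^t,\IX_\epsilon^t,c_\epsilon)$. This yields the asserted equality of laws.

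The main obstacle is precisely the within-flower consistency check. Because triggering couples the colour of an iris to the arrangement of its petals, I would have to argue carefully that uncovering a flower petal-by-petal-then-iris in whatever order the path dictates (and, when the iris is mixed, performing the multistep traversal of Figure \ref{fig:exploration}) samples the flower from exactly its prescribed joint law, so that no information leaks out of $\Gamma_\epsilon([0,t])$ and the Markov splitting is exact. The remaining steps are geometric bookkeeping with the lattice boundary; a complete argument appears in \cite{SLEI}, and the structure of the model is detailed in \cite[\S 2.2]{CL06}.
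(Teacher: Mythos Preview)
The paper does not prove this proposition; it is imported from \cite{SLEI} (Proposition~4.3 there) and stated without argument. There is thus no in-paper proof to compare against.

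Your two-stage outline --- verify admissibility of the slit domain, then identify the conditional law via independence of the unexplored cells --- is the standard route and matches what \cite{SLEI} does. One internal inconsistency to repair: your opening ``atomicity'' claim (that the whole flower is revealed the instant any of its hexagons is met) conflicts with the colouring rules quoted immediately before the Proposition, which explicitly uncover petals and irises \emph{sequentially}, each ``with the conditional distribution given by the hexagons of the flower already determined''. You effectively concede this two paragraphs later when you discuss sequential revelation and flag the within-flower consistency check as the main obstacle, so the atomicity claim in your first paragraph should be dropped or reinterpreted as a statement about how $\Gamma_\epsilon([0,t])$ is bookkept rather than how colours are sampled. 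The substantive content of your argument --- that the sequentially revealed colours form a fair sample of the model restricted to $\Gamma_\epsilon([0,t])$, and that the remaining fillers and flowers are independent of them --- is correct and is exactly the heart of the proof in \cite{SLEI}.
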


\noindent\textsc{Percolation satisfies the KS Condition}.
\smallskip

 It is well known that the Exploration Process produces in any critical percolation configuration $\Omega_\epsilon$, the unique interface connecting $a_\epsilon$ to $c_\epsilon$ denoted by $\gamma_\epsilon$, i.e. the unique curve which separates the blue connected cluster of the boundary from the yellow connected cluster of the boundary. Let $\IP_{\Omega_\epsilon}$ be the law of this interface. Let $\mu_\epsilon$ be the probability measure on random curves induced by the Exploration Process on $\Omega_\epsilon$, and let us endow the space of curves with the sup-norm metric $\mathrm{dist}(\gamma_1,\gamma_2) = \inf\limits_{\varphi_1,\varphi_2}\sup\limits_t |\gamma_1(\varphi_1(t)) - \gamma_2(\varphi_2(t)) | $ over all possible parameterizations $\varphi_1,\varphi_2$.

The proof of the fact that the collection $(\IP_\Omega \; : \; \Omega \text{ admissible})$ satisfies the KS Condition follows directly from \cite[Proposition 4.13]{KS} since this generalized percolation model still satisfies Russo-Seymour-Welsh (RSW) type correlation inequalities.
\begin{remark}
As long as $a^2\geq 2s^2$, then a restricted form of Harris-FKG property holds for all paths and path type events, see \cite[Lemma 6.2]{CL06}. Since we have this essential ingredient in the RSW type arguments, we are indeed free to use RSW sort of correlation inequalities.
\end{remark}

\begin{proposition}[Proposition 4.13 in \cite{KS}]\label{percks}
The collection of the laws of the interface of the modified bond percolation model described above on the hexagonal lattice.
\begin{equation}
    \Sigma_{\mathrm{Percolation}} = \{ (\Omega_\epsilon, \phi(\Omega_\epsilon), \IP_{\Omega_\epsilon}): \Omega_\epsilon \text{ an admissible domain} \} \label{perc}
\end{equation}
satisfies the KS Condition.
\end{proposition}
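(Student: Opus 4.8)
The plan is to reduce the statement to the case of standard critical site percolation treated in \cite[Proposition 4.13]{KS}, the only genuinely new ingredient being that the modified model still satisfies Russo--Seymour--Welsh (RSW) box-crossing estimates. By the equivalence of the KS Condition with Conditions G3 and C2 and by its conformal invariance (Proposition 2.6 in \cite{KS}), it suffices to exhibit a universal $C>1$ so that for every admissible $(\Omega_\epsilon,\partial\Omega_\epsilon,a_\epsilon,c_\epsilon)$, every $z_0$, and every $0<Cr<R$, the conditional probability that $\gamma_\epsilon$ makes an unforced crossing of $A(z_0,r,R)$ given any initial segment is at most $1/2$. First I would use the domain Markov property from Proposition~\ref{admiss}: conditioning on an initial segment of the Exploration Process replaces the admissible quadruple by another admissible quadruple $(\Omega_\epsilon^t,\partial\Omega_\epsilon^t,\IX_\epsilon^t,c_\epsilon)$ with the same law, so it is enough to verify the time-zero condition for all admissible domains simultaneously.

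Next I would carry out the standard reduction of an unforced annulus crossing to a colour-alternating arm event. By definition, an unforced crossing of $A(z_0,r,R)$ occurs along a subarc of $\gamma_\epsilon$ lying in a connected component $U$ of $A(z_0,r,R)\cap\Omega_\epsilon$ that does not disconnect $a_\epsilon$ from $c_\epsilon$; the two arcs of $\partial U$ that do not lie on $\partial B(z_0,r)\cup\partial B(z_0,R)$ then consist of pieces of $\partial\Omega_\epsilon$, one carrying the blue and the other the yellow boundary colour. Since $\gamma_\epsilon$ separates the blue boundary cluster from the yellow one, a crossing of $A(z_0,r,R)$ inside $U$ forces the existence, inside $U$, of both a blue crossing and a yellow crossing of the annulus in the radial direction, and in particular of a monochromatic hard-direction crossing of one of the two sub-annuli $A(z_0,r,\sqrt{rR})$, $A(z_0,\sqrt{rR},R)$. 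By RSW applied to an annulus of modulus $\tfrac12\log(R/r)$, the probability of any such crossing is at most $1/2$ once $R/r$ exceeds a universal constant, which is the KS bound; the power-law version (Condition G3) then follows by chaining the RSW estimate over a bounded-ratio sequence of nested annuli together with the domain Markov property and the finite-range dependence of the model.

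The key input beyond \cite{KS} is that the modified model admits RSW estimates that are uniform in $\epsilon$ and in the domain, despite the local correlations coming from triggers. Here I would exploit the structure of the model: all petal arrangements and all flowers are configured independently of one another and of the background filler sites, so the only dependence is the purely local triggering rule inside a single iris; and, as recorded in the Remark above and in \cite[Lemma 6.2]{CL06}, the hypothesis $a^2\ge 2s^2$ guarantees the restricted Harris--FKG inequality for paths and path-type events. These are exactly the ingredients used in \cite[Theorem 3.10]{CL06} to establish the full RSW theory (and Cardy's formula) for this family of models, yielding crossing probabilities of conformal rectangles of bounded modulus that are bounded away from $0$ and $1$. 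Feeding these bounds into the proof of \cite[Proposition 4.13]{KS}, which invokes only RSW and the domain Markov property as black boxes, will complete the argument.

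The hard part will be the verification that the RSW machinery genuinely survives the passage from ordinary triangular-site percolation to this correlated model: one must confirm that the restricted FKG property is strong enough to run the Russo--Seymour--Welsh gluing of crossings in the required generality, and that the lattice boundary convention --- whereby a flower cut by $\partial\Omega_\epsilon$ is included as a whole, and whereby the admissibility constraints pin down the colours near $a_\epsilon$ and $c_\epsilon$ --- does not destroy uniformity; the latter costs only $\epsilon$-independent constants, since a flower has bounded diameter in lattice units. Once this RSW input is secured, the remainder is a routine transcription of the percolation argument of \cite{KS}.
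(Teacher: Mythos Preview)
Your proposal is correct and follows essentially the same approach as the paper: reduce to the time-zero condition via the domain Markov property (Proposition~\ref{admiss}), then invoke RSW for the modified model (available because of the restricted Harris--FKG property from \cite[Lemma~6.2]{CL06}) to bound the probability of an unforced annulus crossing. The only cosmetic difference is that the paper phrases the RSW step as ``with positive probability a sub-annulus contains both a blue circuit and a yellow circuit, which blocks the interface,'' and then chains over $N$ disjoint sub-annuli to get $(1-q^2)^N\le 1/2$, whereas you phrase it dually as ``an unforced crossing forces monochromatic radial arms of both colours''; these are standard equivalent formulations of the same RSW input.
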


\begin{proof}
First, notice that for percolation, we do not have to consider stopping times. Indeed, by Proposition \eqref{admiss} if $\gamma:[0,N]\rightarrow \Omega_\epsilon\cup \{a,c \}$ is the interface parameterized so that $\gamma(k), \; k=0,1,\cdots, N$ are vertices along the path, then $\Omega_\epsilon\backslash \gamma(0,k]$ is admissible for any $k=0,1,\cdots, N$ and there is no information gained during $(k,k+1)$. Also, the law of percolation satisfies the domain Markov property so the law conditioned to the vertices explored up to time $n$ is the percolation measure in the domain where $\gamma(k), \; k=0,1,\cdots, n$ is erased. Thus, the family (\eqref{perc}) is closed under stopping.

Since crossing an annuli is a translation invariant event for percolation, for any $\Omega_\epsilon$, we can apply a translation and consider the annuli around the origin. Let $B_n$ be the set of points on the triangular lattice that are graph distance less than or equal to $n$ from $0$. Consider the annulus $B_{9^Nn}\backslash B_n$ for any $n,N\in\IN$. We can consider concentric balls $B_{3n}$ inside the annulus $B_{9^Nn}\backslash B_n$. Then for an open crossing of the annulus $B_{9^Nn}\backslash B_n$, there needs to be an open path inside each annulus $A_n=B_{3n}\backslash B_n, A_{3n} = B_{9n}\backslash B_{3n}, \cdots $ etc. The probability that $A_n$ contains an open path separating $0$ from $\infty$ and $A_{3n}$ contains a closed path separating $0$ from $\infty$ are independent. Hence, by Russo-Seymour-Welsh (RSW) theory, we know that there exists a $q>0$ for any $n$
\[
\mu_\epsilon\left(\text{open path inside } A_n \cap \text{ closed path in } A_{3n} \text{ both separating 0 from } \infty \right) \geq q^2
\]
Since a closed path in one of the concentric annuli prohibits an open crossing of $B_{9^Nn}\backslash B_n$, we conclude that
\[
\IP_\epsilon\left( \gamma \text{ makes an unforced crossing of } B_{9^Nn}\backslash B_n\right) \leq (1-q^2)^N \leq \frac{1}{2}
\]
for large enough $N$.
\end{proof}

\noindent\textsc{The observable.}

 Consider two addition marked points (or prime ends) b,d so that a,b,c,d are in cyclic order. Let $\Omega_n$ be the \textit{admissible domain} described above at lattice scale $n^{-1}$ to the domain $\Omega$. More details of the construction can be found in \cite[\S 3 and \S 4]{SLEI} and \cite[\S 4.2]{SLEII}. Furthermore, the boundary arcs can be appropriately coloured and the lattice points $a_n,b_n,c_n,d_n$ can be selected. The main objects of study for percolation is the crossing probability of the conformal rectangle $\Omega_n$ from $(a_n, b_n)$ to $(c_n, d_n)$, denoted by $\cC_n$ and $\cC_\infty$ its limit in the domain $\Omega$, i.e., Cardy's formula in the limiting domain. Geometrically, $\cC_n$ produces in any percolation configuration on $\Omega_n$, the unique interface connecting $a_n$ to $c_n$, i.e. the curve separating the blue lattice connected cluster of the boundary from the yellow. Let us temporarily forget the marked point $a_n$ and consider the conformal triangle $(\Omega_n; b_n, c_n, d_n)$.

 We will briefly recall the observable function introduced in \cite{SMIRNOV2001239} which we will denote by $S_b, S_c, S_d$. For a lattice point $z\in \Omega_n$, $S_d(z)$ is the probability of a yellow crossing from $(c_n,d_n)$ to $(d_n,b_n)$ separating $z$ from $(b_n,c_n)$. Notice that $S_d$ has boundary value $0$ on $(b_n,c_n)$ and $1$ at the point $d_n$. $S_b$ and $S_c$ are defined similarly. We define the complexified function $S_n := S_b + \tau S_c + \tau^2 S_d$ with $\tau=e^{2\pi i/3}$, called the \textit{Carleson-Cardy-Smirnov} (CCS) function. The following lemma due to Smirnov shows that CCS observable is a martingale.

 \begin{lemma} \label{lem:mgo}
 The CCS observable is a martingale observable.
 \end{lemma}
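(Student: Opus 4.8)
The plan is to verify the two defining conditions of a conformally invariant martingale observable for the CCS function $S_n = S_b + \tau S_c + \tau^2 S_d$: first that $S_n$ is a martingale with respect to the exploration process filtration, and second that it has a conformally covariant (in fact invariant) scaling limit given by an explicit holomorphic function. The martingale property is where I would start, and I expect it to be the routine part given the setup already in place. The key observation is that each of $S_b, S_c, S_d$ is defined as a crossing probability of a percolation event in $\Omega_n$ that depends only on the configuration \emph{away} from the explored region. Concretely, running the exploration process from $a_n$ reveals hexagons along the interface; by Proposition \eqref{admiss}, conditioned on $\Gamma_\epsilon([0,t])$ the law of percolation in $\Omega_\epsilon^t$ from $\IX_\epsilon^t$ to $c_\epsilon$ is again the percolation measure in the slit admissible domain. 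The events defining $S_b, S_c, S_d$ — yellow crossings separating an interior point $z$ from a boundary arc — can be re-expressed in the slit domain $\Omega_n^t$ because the explored interface itself, being the blue/yellow boundary curve, can be absorbed into the (updated) boundary coloring without changing the event. Hence $S_\bullet(z)$ evaluated in $\Omega_n^t$ with updated marked points is exactly $\IE[\mathbf{1}_{\text{event}} \mid \mathcal F_t]$, which makes $t \mapsto S_n(z)$ (as a function of the slit domain) a bounded martingale by the tower property. The only care needed is near the tip $\IX_\epsilon^t$: one must check that the relevant crossing event continues to make sense as the interface approaches $z$, but this is handled because we evaluate at a fixed interior $z$ and stop before the interface swallows it.

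Next I would record the conformal-invariance input: by the Carleson--Cardy--Smirnov theory (specifically the convergence established in \cite{SMIRNOV2001239} and adapted to the modified bond percolation model in \cite{CL06}, using that this model satisfies Cardy's formula and RSW estimates), $S_n$ converges, uniformly on compact subsets of the interior, to a conformally invariant limit $s(z) = s_{(\Omega;b,c,d)}(z)$; after uniformizing the conformal triangle $(\Omega;b,c,d)$ to an equilateral triangle, $s$ becomes an affine (hence holomorphic) bijection onto that triangle, so in particular it is a holomorphic function with no covariance prefactors — i.e. $\alpha = \alpha' = \beta_j = \beta_j' = 0$. This identifies $S_n$ as being polynomially (or at least uniformly) close to a conformally invariant $h$ in the sense needed, with $h$ harmonic/holomorphic and therefore a solution of the BPZ-type equation \eqref{eq:BPZ}; one then reads off $\kappa = 6$ from the form of $h$ exactly as in the derivation preceding Definition \eqref{def:family}. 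The non-degeneracy required by \eqref{cond:2a} holds because $s$ is a non-constant holomorphic function (an affine isomorphism), so its real and imaginary parts cannot satisfy the degenerate relation on any open set.

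The main obstacle, and the step deserving the most care, is the interaction between the martingale property and the marked-point structure: the CCS observable is naturally attached to the conformal triangle $(\Omega_n; b_n, c_n, d_n)$ with $a_n$ temporarily forgotten, yet the exploration process grows from $a_n$. One must check that as the interface grows, the triple $(b_n, c_n, d_n)$ and the coloring of the boundary arcs update consistently so that the slit domain $\Omega_n^t$ remains an admissible domain (which Proposition \eqref{admiss} guarantees) \emph{and} so that the events defining $S_b^t, S_c^t, S_d^t$ in $\Omega_n^t$ are precisely the conditional versions of the original events. This requires tracking which boundary arc the tip $\IX_\epsilon^t$ lies on and verifying that the blue-to-the-right rule keeps the relevant yellow crossing events well-posed; once this bookkeeping is done, the martingale property follows formally. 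I would therefore structure the proof as: (i) set up the slit domain and updated marked points via Proposition \eqref{admiss}; (ii) express $S_\bullet^t(z) = \IE[\,\cdot \mid \mathcal F_t]$ and conclude $S_n$ is a martingale; (iii) invoke the CCS/Cardy convergence for the modified model to identify the conformally invariant limit $h$, its holomorphicity, non-degeneracy, and the value $\kappa = 6$ — thereby confirming $S_n$ is a martingale observable in the sense of Definition \eqref{def:family}.
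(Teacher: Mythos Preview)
Your martingale argument is correct and follows essentially the same route as the paper: use the domain Markov property (Proposition \ref{admiss}) to see that the crossing events defining $S_b, S_c, S_d$ in the original domain coincide, after conditioning on $\gamma^\epsilon[0,t]$, with the same crossing events in the slit domain $\Omega_\epsilon^t$ with updated boundary arcs; the tower property then gives the martingale identity. The paper's proof makes the geometric mechanism slightly more explicit --- a yellow crossing from $(c,d)$ to $(d,b)$ is either disjoint from $\gamma^\epsilon[0,t]$ or meets its yellow side, and in either case yields a crossing in the slit domain, with the converse holding as well --- but the substance is the same.

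Where you diverge from the paper is in scope. The paper's proof of this lemma establishes \emph{only} the martingale property; the conformal-invariance of the scaling limit, the identification of $h$, the verification of the BPZ equation with $\kappa = 6$, and the non-degeneracy are not part of this lemma at all. In the paper these are handled by separate statements: $(\sigma,\rho)$-holomorphicity in Proposition \ref{sfp}, polynomial convergence to Cardy's formula in Theorem \ref{Obscvg}, and the BPZ/non-degeneracy check in the short paragraph immediately preceding the final convergence theorem. So parts (ii) and (iii) of your outline, while correct in content, belong elsewhere in the argument and inflate what this lemma is actually claiming. If you trim your write-up to step (i)--(ii) (the slit-domain identification and the resulting martingale identity), it matches the paper's proof.
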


 \begin{proof}
 Parameterize the interface $\gamma^\epsilon$ and draw the exploration process up to time $t$, $\gamma^\epsilon[0,t]$. By convention/definition, the faces on the left and right side of the exploration process are yellow and blue, respectively. Then any open crossing (yellow crossing) from arc $bc$ to the arc $db$ inside $\Omega$ is either disjoint from $\gamma^\epsilon[0,t]$ or hits its "open'' (yellow) arc of $\gamma^\epsilon[0,t]$. Either case produces an open crossing from the arc $\gamma^\epsilon(t)c$ to the arc $d\gamma^\epsilon(t)$ inside $\Omega\backslash \gamma^\epsilon[0,t]$. The converse also holds.
 Thus, we have the following observation: crossing probabilities conditioned on $\gamma^\epsilon[0,t]$ coincide with crossing probabilities in the slit domain $\Omega\backslash\gamma^\epsilon[0,t].$

 Let $Q$ denote the area above the lowest (i.e. closest to arc $bc$) open crossing from arc $cd$ to arc $db$. Then $S_d(z) = \IP(z\in Q)$. We can view the other crossing probabilities $S_b$ and $S_c$ in the same way. By the above observation, we know that this probability conditioned on $\gamma^\epsilon[0,t]$ will coincide with the probability in the slit domain $\Omega\backslash\gamma^\epsilon[0,t].$ The same holds true for $S_b(z)$ and $S_c(z)$.

 \begin{figure}[h]
    \centering
    \includegraphics[width=0.5\textwidth]{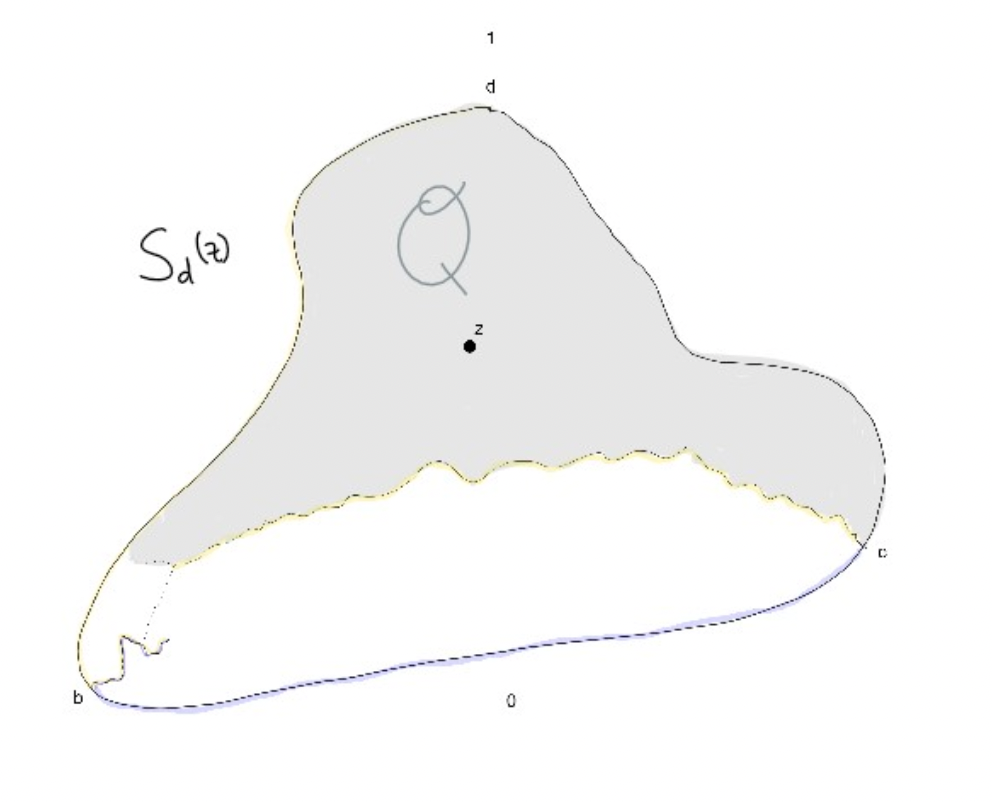}
    \caption{Observe that the lowest yellow crossing cannot cross the curve $\gamma$ since it is blocked by the blue side of the curve.}
\end{figure}

Thus, one sees for every realization of $\gamma^\epsilon[0,t]$, the CCS function conditioned on $\gamma^\epsilon[0,t]$ coincides with the CCS function in the slit domain, an analogue of the Markov property. Stopping the curve at times $0<t<s$, say with the least discrete time such that the path has capacity $\geq t$, and using the total probability for every realization of $\gamma^\epsilon[0,t]$ we get the martingale property:
 \[
 \IE_{\mu_\epsilon}\left[S_\epsilon(\Omega_\epsilon\backslash \gamma^\epsilon[0,s], \gamma^\epsilon(s), b,c, d) | \gamma^\epsilon[0,t] \right] = S_\epsilon(\Omega_\epsilon\backslash \gamma^\epsilon[0,t], \gamma^\epsilon(t), b,c, d).
 \]

 \end{proof}

 The CCS functions $S_n$ are not \textit{discrete analytic} but are ``almost'' discrete analytic in the following sense, see \cite[\S 4]{BCL}:
 \begin{definition}[$(\sigma,\rho)-$Holomorphic]
\label{nhf}
Let $\Lambda \subseteq \mathbb C$ be a simply connected domain and $\Lambda_\epsilon$ be the (interior) discretized domain given as $\Lambda_\epsilon := \bigcup_{h_\epsilon \subseteq \Lambda} h_\epsilon$ and let $(Q_{\epsilon}: \Lambda_{\epsilon} \to \mathbb C)_{\epsilon\searrow 0}$ be a
sequence of functions defined on the vertices of $\Lambda_\epsilon$.
We say that the sequence $(Q_{\epsilon})$ is \emph{$(\sigma,\rho)$--holomorphic} if there exist constants $0 < \sigma, \rho \leq 1$ such that for all $\epsilon$ sufficiently small:
\begin{enumerate}
\item $Q_{\varepsilon}$ is H\"older continuous up to $\partial \Lambda_\epsilon$: There exists some small $\psi > 0$ and constants $c,C\in (0,\infty)$ (independent of domain and $\epsilon$) such that
\begin{enumerate}
\item if $z_\epsilon, w_\epsilon \in \Lambda_\epsilon \setminus N_\psi(\partial \Lambda_\epsilon)$ such that $|z_\epsilon - w_\epsilon| < \psi$, then $|Q_\epsilon(z_\epsilon) - Q_\epsilon(w_\epsilon)| \leq c \left( \frac{|z_\epsilon - w_\epsilon|}{\psi} \right)^\sigma$ and
\item if $z_\epsilon \in N_\psi(\partial \Lambda_\epsilon)$, then there exists some $w_\epsilon^\star \in \partial \Lambda_\epsilon$ such that $ |Q_\epsilon(z_\epsilon) - Q_\epsilon(w_\epsilon^\star)| \leq C \left(\frac{|z_\epsilon - w_\epsilon^\star|}{\psi} \right)^\sigma$.
\end{enumerate}

\item For any simply closed lattice contour $\Gamma_\epsilon$,

\begin{equation}
\label{sint}
\left| \oint_{\Gamma_\epsilon} Q~dz
\right| = \left|\sum_{h_\epsilon \subseteq \Lambda_\epsilon'} \oint_{\partial h_\epsilon} Q~dz \right|
\leq c\cdot |\Gamma_\epsilon| \cdot \epsilon^\rho,
\end{equation}

with $c\in(0,\infty)$ (independent of domain and $\epsilon$) and $\Lambda_\epsilon', |\Gamma_\epsilon|$ denoting the region enclosed by $\Gamma_\epsilon$ and the Euclidean length of $\Gamma_\epsilon$, respectively.
\end{enumerate}
\end{definition}

\begin{proposition}[Proposition 4.3, \cite{BCL}] \label{sfp}
Let $\Lambda$ denote a conformal triangle with marked points (or prime ends)
$b$, $c$, $d$ and let
$\Lambda_{\epsilon}$ denote an interior approximation (see \cite[Definition 3.1]{SLEII}) of $\Lambda$ with $b_{\epsilon}, c_\epsilon, d_\epsilon$ the associated boundary points.
Let $S_{\epsilon}(z)$ denote the CCS function defined on $\Lambda_{\epsilon}$.
Then for all $\epsilon$ sufficiently small, the functions $(S_\epsilon: \Lambda_\epsilon \rightarrow \mathbb C)$ are $(\sigma, \rho)$--holomorphic for some $\sigma, \rho > 0$.
\end{proposition}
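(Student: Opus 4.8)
The plan is to check, directly from the percolation representation of $S_\epsilon$, the two requirements of Definition \eqref{nhf}: the boundary H\"older regularity in (1) and the approximate--holomorphicity contour bound in (2). Both rest on Russo--Seymour--Welsh (RSW) estimates, which are genuinely available for this modified model because the hypothesis $a^2\ge 2s^2$ yields the restricted Harris--FKG inequality for path events (see the remark following Proposition \eqref{percks} and \cite[Lemma 6.2]{CL06}). Conceptually this is the quantitative form of Smirnov's argument \cite{SMIRNOV2001239}, carried out in the floral model as in \cite{BCL}.

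For property (1) I would start from the observation recorded in the proof of Lemma \eqref{lem:mgo}: each of $S_b,S_c,S_d$ is a genuine crossing probability, e.g.\ $S_d(z)=\IP(z\in Q)$ with $Q$ the region above the lowest yellow $cd$-to-$db$ crossing. For two interior vertices $z_\epsilon,w_\epsilon$ at distance $<\psi$, the indicators $\mathbf{1}\{z_\epsilon\in Q\}$ and $\mathbf{1}\{w_\epsilon\in Q\}$ can disagree only if the lowest crossing threads between them, which forces a monochromatic crossing of a suitable annulus $A(z_\epsilon,r,r')$ with $r\asymp|z_\epsilon-w_\epsilon|$ and $r'\asymp\psi$; RSW bounds the probability of this by $c(|z_\epsilon-w_\epsilon|/\psi)^\sigma$, uniformly in $\epsilon$ and in the domain, which gives the interior estimate (a) after combining the three components. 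For the boundary estimate (b), given $z_\epsilon\in N_\psi(\partial\Lambda_\epsilon)$ I would take the closest boundary point $w_\epsilon^\star$, use that the CCS function has explicit boundary data on the three arcs of $\partial\Lambda_\epsilon$ ($0$, $1$, or affine near the marked points), and bound $|S_\bullet(z_\epsilon)-S_\bullet(w_\epsilon^\star)|$ by a half-plane arm event across $A(w_\epsilon^\star,|z_\epsilon-w_\epsilon^\star|,\psi)$, again a power of the distance by boundary RSW. The only fiddly regions are neighborhoods of the three prime ends $b,c,d$, where I would use that $\Lambda$ is a conformal triangle together with the known corner asymptotics of $S_\epsilon$.

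For property (2), the starting point is the identity $\sum_{h_\epsilon\subseteq\Lambda_\epsilon'}\oint_{\partial h_\epsilon}S_\epsilon\,dz=\oint_{\Gamma_\epsilon}S_\epsilon\,dz$ (interior edges cancel for any function), so the task reduces to estimating the single-hexagon integrals and then counting. Here I would invoke Smirnov's colour-rotation (resampling) argument: at a filler hexagon the cyclic rotation of the two colours through the three marked arcs is a measure-preserving bijection, forcing the local identities $S_b(v_2)-S_b(v_3)=S_c(v_3)-S_c(v_1)=S_d(v_1)-S_d(v_2)$ among the incident vertices $v_1,v_2,v_3$, hence the discrete Cauchy--Riemann relations for $S_\epsilon=S_b+\tau S_c+\tau^2S_d$ with $\tau=e^{2\pi i/3}$ and a per-hexagon integral $\oint_{\partial h_\epsilon}S_\epsilon\,dz$ that is higher order in $\epsilon$. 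At a flower the rotation is no longer measure preserving, because of the mixed iris states and of triggering, so I would isolate the defect: it is supported on the exceptional flower configurations, and its contribution to $\oint_{\partial h_\epsilon}S_\epsilon\,dz$ carries an extra factor which RSW/arm estimates render power-small in $\epsilon$, since such a defect can occur only alongside certain local crossings around the flower. Finally I would sum: the floral arrangement is periodic (so flowers are a bounded fraction of the hexagons) and $\Lambda$ is bounded ($\diam\Lambda\le R$), so that the number of hexagons in $\Lambda_\epsilon'$ and the Euclidean length $|\Gamma_\epsilon|$ are comparable up to the fixed constant $R$; combining the per-hexagon bounds yields $|\oint_{\Gamma_\epsilon}S_\epsilon\,dz|\le c|\Gamma_\epsilon|\epsilon^\rho$ for some $\rho>0$, shrinking $\sigma$ and $\rho$ if necessary so a single pair works.

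The main obstacle is the second part, and within it the quantitative control of the resampling defect at flowers. The naive bound coming from the part-(1) H\"older regularity alone is far too weak to survive summation over of order $\epsilon^{-2}$ hexagons, so one really needs the (near-)exact cancellation from Smirnov's colour symmetry at fillers, together with a sharp RSW input bounding how often a flower sits in an exceptional configuration \emph{and} is pivotal-like for the crossings entering $S_b,S_c,S_d$. Organizing these so that the accumulated error telescopes to the boundary and produces the clean $|\Gamma_\epsilon|\epsilon^\rho$ form, rather than an area-sized loss, is the delicate bookkeeping --- and is precisely the content of \cite[Proposition 4.3]{BCL} that we are quoting here.
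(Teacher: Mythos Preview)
The paper does not supply its own proof of this proposition; it is stated verbatim as a citation of \cite[Proposition 4.3]{BCL}, and your sketch is an accurate outline of the argument in that reference (RSW-based H\"older regularity for part (1), Smirnov's colour-rotation identities at fillers plus a flower-defect analysis for part (2)). Since both you and the paper ultimately defer to \cite{BCL}, there is nothing to compare; your final paragraph correctly identifies the genuine work as living in that source.
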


\noindent\textsc{Polynomial convergence of the observable function to its continuous counterpart.}

Observe that $\cC_n$ can be realized from $S_d(a_n)$ as $\cC_n = \frac{-2}{\sqrt{3}}\cdot \Im[S_n(a_n)]$. Since it is already known that $S_n$ converges to $H:D\to T$, a conformal map to equilateral triangle $T$ which sends $(b,c,d)$ to $(1,\tau,\tau^2)$, we can see that $\cC_\infty = \frac{-2}{\sqrt{3}} \Im[H(a)]$ (see, \cite{SMIRNOV2001239}, \cite{Beffara}, and \cite{SLEII}). Thus, when establishing a rate of convergence of $\cC_n$ to $\cC_\infty$, it is sufficient to show that there exists $\psi>0$ such that
\[
|S_n(a_n) - H(a)| \leq C_\psi \cdot n^{-\psi}
\]
for some $C_\psi < \infty$ independent of the domain. Indeed, a polynomial rate of convergence is shown in \cite[Main Theorem]{BCL}. This is a slight reformulation of the theorem in which we have that the constant $\psi$ is independent of the domain $\Omega$. Indeed, a direct reconstruction of the proof in $\cite{BCL}$ gives this result.

\begin{theorem}
\label{Obscvg}
Let $\Omega$ be a domain with two marked boundary points (or prime ends) $a$ and $c$. Let $(\Omega_n, a_n, c_n)$ be its admissible discretization.  Consider the site percolation model or the models introduced in \cite{CL06} on the domain $\Omega_n$.  In the case of the latter we also impose the assumption that the boundary Minkowski dimension is less than 2 (in the former, this is not necessary).  Let $\gamma$ be the interface between $a$ and $c$. Consider the stopping time $T:= \inf \{t\geq 0 \; : \; \gamma \text{ enters a } \Delta\text{-neighbourhood of } c \}$ for some $\Delta>0$. Then there exists $n_0<\infty$ depending only on the domain $(\Omega;a,b,c,d)$ and $T$ such that the following estimate holds: There exists some $\psi > 0$ (which does not depend on the domain $\Omega$) such that $\mathscr C_n$ converges to its limit with the estimate
$$
|\mathscr C_n - \mathscr C_{\infty}| \leq C_\psi \cdot n^{-\psi},
$$
for some $C_\psi < \infty$ provided $n \geq n_{0}(\Omega)$ is sufficiently large.
\end{theorem}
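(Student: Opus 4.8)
The plan is to reduce the statement to a polynomial estimate on the CCS function $S_n$ at the prime end $a$, and then to re-run the argument of \cite{BCL} while keeping track of which constants are permitted to depend on the domain. Recall that $\mathscr C_n = -\tfrac{2}{\sqrt 3}\,\Im[S_n(a_n)]$ and that, since the continuous limit of $S_n$ is the conformal map $H\colon\Omega\to T$ onto the equilateral triangle $T$ carrying $(b,c,d)$ to $(1,\tau,\tau^2)$, one has $\mathscr C_\infty = -\tfrac{2}{\sqrt 3}\,\Im[H(a)]$ (see \cite{SMIRNOV2001239}, \cite{Beffara}, \cite{SLEII}). Hence it suffices to prove $|S_n(a_n)-H(a)|\le C_\psi\,n^{-\psi}$ for some $\psi>0$ that does not depend on $\Omega$ and some finite $C_\psi$.

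First I would use Proposition \ref{sfp}: for all large $n$ the functions $S_\epsilon$, $\epsilon = 1/n$, are $(\sigma,\rho)$-holomorphic in the sense of Definition \ref{nhf} with constants $\sigma,\rho\in(0,1]$ that are \emph{independent of $\Omega$}. The engine of \cite{BCL} is the general mechanism converting this near-holomorphicity into a quantitative estimate: a uniformly bounded $(\sigma,\rho)$-holomorphic sequence with H\"older boundary data differs from a genuinely discrete-holomorphic function by an error governed by the contour bound \eqref{sint}; that discrete-holomorphic function converges on compact subsets of the interior to the holomorphic function with the same boundary data via the standard discrete-complex-analysis estimates; and the boundary H\"older continuity of Definition \ref{nhf}(1) identifies those boundary values uniformly. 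The point on which the present formulation improves \cite[Main Theorem]{BCL} is that in every one of these steps the exponent produced is a fixed function of $\sigma$, $\rho$, and universal lattice constants (for instance the constant controlling the Hausdorff distance between $\partial\Omega_n$ and $\partial\Omega$, which depends only on the lattice), while $\Omega$ enters only through multiplicative constants and through the inner radius of the compact set on which one works. A direct reconstruction of the proof of \cite{BCL} thus gives $\sup_K|S_n-H|\le C(K,\Omega)\,n^{-\psi_0}$ with $\psi_0 = \psi_0(\sigma,\rho)>0$ uniform over admissible domains.

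Next I would transfer this interior estimate to the boundary prime end $a$. Both $S_\epsilon$ and $H$ are H\"older continuous up to the boundary near $a$ with a domain-independent exponent: for $S_\epsilon$ this is part (1) of Definition \ref{nhf}, and for $H$ it follows from the local regularity at the accessible prime end $a$ (three hexagons meet at $a$, one blue, one yellow, one interior, so $a$ is a regular boundary point). Evaluating $S_n$ and $H$ at an auxiliary interior point at distance $n^{-\theta}$ from $a$, combining the interior rate there with the two H\"older moduli, and optimizing the small parameter $\theta>0$ produces $|S_n(a_n)-H(a)|\le C_\psi\,n^{-\psi}$ with $\psi = \psi(\sigma,\rho)>0$. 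For the one-parameter models of \cite{CL06} this step uses the hypothesis that the Minkowski dimension of $\partial\Omega$ is below $2$: it bounds the area of the $n^{-1}$-neighbourhood of $\partial\Omega$ and hence (in the spirit of Lemma \ref{lem:discrete}, but under the weaker Minkowski hypothesis in place of H\"older) guarantees that the discrete and continuous boundary data agree up to a power of $n^{-1}$ near $a$; for ordinary site percolation the hexagonal boundary is automatically regular and the hypothesis is unnecessary. Finally $n_0$ is taken to absorb the threshold on $\epsilon$ in Proposition \ref{sfp}, the requirement that one remains outside a $\Delta$-neighbourhood of the corner $c$ where $H$ degenerates, and the domain-dependent scale below which the boundary approximation is effective, so that $C_\psi$ depends on $(\Omega;a,b,c,d)$, $T$ and $\Delta$ while $\psi$ does not.

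The main obstacle is precisely this bookkeeping: one must verify, by inspecting the proof in \cite{BCL}, that no step there secretly yields a domain-dependent exponent --- in particular that the passage from the contour bound \eqref{sint} to uniform interior convergence, and the control of $S_n$ and $H$ in a shrinking neighbourhood of the prime end $a$ under the Minkowski bound, both produce rates depending only on $\sigma$, $\rho$ and on that bound. Everything else is a routine reorganization of the argument of \cite{BCL}.
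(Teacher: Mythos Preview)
Your proposal is correct and follows exactly the route the paper indicates: the paper does not give an independent proof of this theorem but simply states that it is a slight reformulation of \cite[Main Theorem]{BCL} in which the exponent $\psi$ is domain-independent, adding only that ``a direct reconstruction of the proof in \cite{BCL} gives this result.'' Your outline is precisely such a reconstruction --- reducing to $|S_n(a_n)-H(a)|$, invoking the $(\sigma,\rho)$-holomorphicity of Proposition~\ref{sfp}, and tracking that every exponent produced by the \cite{BCL} machinery depends only on $\sigma,\rho$ and lattice constants --- so you have in fact supplied more detail than the paper itself.
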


\noindent\textsc{Polynomial convergence of critical percolation on the triangular lattice.}

By a straightforward computation, we can see that the martingale observable is a nondegenerate solution to BPZ equation (\eqref{eq:BPZ}). Thus, by Proposition \eqref{percks}, Lemma \eqref{lem:mgo}, Proposition \eqref{sfp}, and Theorem \eqref{Obscvg}, we can now apply Theorem \eqref{mainthm} to obtain:

\begin{theorem}
Let $\gamma_n$ be the percolation Exploration Process defined above on the admissible triangular lattice domain $\Omega_n$. Let $\tilde{\gamma}_n$ be its image in $(\IH;0,\infty)$ parameterized by capacity. There exists stopping time $T<\infty$ and $n_1$ such that $\displaystyle \sup_{n}\sup_{t\in [0,T]} n_1(\Omega_t) < \infty$. Then if $n\geq n_1$, there is a coupling of $\gamma_n$ with Brownian motion $B(t), \; t\geq 0$ with the property that if $\tilde{\gamma}$ denotes the chordal SLE$_6$ path in $\IH$,
\[
\IP \left\{ \sup_{t\in[0,T]} |\tilde{\gamma}_n(t) - \tilde{\gamma}(t) \;| \;> n^{-u}\right\} < n^{-u}
\]
for some $u\in(0,1)$ and where both curves are parameterized by capacity.

Moreover, if $\Omega$ is an $\alpha$-H\"older domain, then under the same coupling, the SLE curve in the image is polynomially close to the original discrete curve:
\[
 \mathbb{P}\left\{\sup_{t\in[0,T]} d_*\left(\gamma^n(t),\phi^{-1}(\tilde\gamma(t))\right)> n^{-v}  \right\} < n^{-v}
\]
where $v$ depends only on $\alpha$ and $u$.

\end{theorem}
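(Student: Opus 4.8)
The plan is to obtain the theorem as a direct application of the Main Theorem \eqref{mainthm}: the whole content of the argument is to check that the family of percolation Exploration Processes
\[
 \{\gamma_{(\Omega,a,c)}^n\}
\]
is \emph{admissible} in the sense of Definition \eqref{def:family}, with the associated diffusivity equal to $\kappa=6$, and then to read off the two displayed estimates from the conclusion of Theorem \eqref{mainthm}.

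First I would verify the structural hypotheses of Definition \eqref{def:family}. The domain Markov property, together with the fact that the family of admissible domains is closed under stopping the Exploration Process at a revealed vertex, is Proposition \eqref{admiss}; the KS condition is Proposition \eqref{percks}. For the multivariable discrete observable of Condition \eqref{cond:1} I would take the Carleson--Cardy--Smirnov function $S_n = S_b + \tau S_c + \tau^2 S_d$ on the conformal triangle $(\Omega_n;b_n,c_n,d_n)$, regarded as a function of one interior point $z\in\Omega_n$ (so $k=1$) and the marked boundary points $b,d$ (so $l=2$), the curve running from $a$ to $c$. By Lemma \eqref{lem:mgo}, $S_n$ is an \emph{exact} martingale for the Exploration Process (not merely an almost martingale), so Condition \eqref{cond:1} holds with error $0$.

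Second, for the continuous counterpart required in Conditions \eqref{cond:2a} and \eqref{cond:2b} I would take $h$ to be Carleson's conformal map $H$ from $\Omega$ onto the equilateral triangle $T$ sending $(b,c,d)$ to $(1,\tau,\tau^2)$, written in the half-plane coordinates obtained after uniformizing $(\Omega;a,c)\to(\IH;0,\infty)$. Since $S_n$ is a martingale converging to the conformally natural object $H$, the BPZ lemma proved above forces $H$ to be a $\mathcal C^3$ solution of the BPZ equation \eqref{eq:BPZ} for some $0<\kappa\le 8$; matching with the explicit hypergeometric form of Cardy's formula pins down $\kappa=6$, and a direct evaluation of the second-order operator appearing in \eqref{eq:degenerate} on $H$ exhibits an open set $\mathcal V\subset\IH\times(\IR\setminus\{0\})^2$ on which $H$ is non-degenerate; this gives Condition \eqref{cond:2a}. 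For Condition \eqref{cond:2b} I would combine the $(\sigma,\rho)$-holomorphicity of $S_n$ (Proposition \eqref{sfp}) with the domain-uniform polynomial rate of Theorem \eqref{Obscvg}: re-running the Binder--Chayes--Lei near-analyticity argument at a general interior point of $\mathcal V$, rather than at the tip $a_n$, yields $|S_n(z)-H(z)|\le C n^{-\psi}$ uniformly for $\hat\phi^n(\hat v)\in\mathcal V$, so Condition \eqref{cond:2b} holds with $s=\psi$. With admissibility in hand, Theorem \eqref{mainthm} applies: the hypothesis $\sup_n\sup_{t\in[0,T]} n_1(\Omega_t)<\infty$ is precisely what keeps the threshold $N$ of Definition \eqref{def:family} bounded along the curve up to capacity time $T$, and the theorem produces the coupling of $\tilde\gamma_n$ with $\sqrt 6\, B(t)$ and the bound $\mathbb P\{\sup_{t\in[0,T]} d_*(\tilde\gamma_n(t),\tilde\gamma(t))>n^{-u}\}<n^{-u}$ for some $u\in(0,1)$ depending only on $\psi$; on the relevant compact set $d_*$ is comparable to the Euclidean metric, giving the first displayed estimate. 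For the \emph{moreover} part, when $\Omega$ is $\alpha$-H\"older I would invoke the second assertion of Theorem \eqref{mainthm}, which through Lemma \eqref{lem:discrete} (polynomial convergence of $\psi_n$ to $\psi$ up to the boundary) and the H\"older regularity of $\psi$ transfers the estimate back to $\gamma_n=\psi_n(\tilde\gamma_n)$, with a new exponent $v$ depending only on $\alpha$ and $u$.

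The main obstacle is the pair of facts feeding Conditions \eqref{cond:2a} and \eqref{cond:2b}: identifying $\kappa=6$ and checking non-degeneracy of the BPZ solution is a concrete but delicate computation with the hypergeometric representation of Cardy's formula (one must fix the open set $\mathcal V$ and confirm that the degeneracy identity of Definition \eqref{eq:degenerate} genuinely fails there), while upgrading Theorem \eqref{Obscvg} --- stated for the crossing probability $\mathscr C_n$ evaluated at the tip $a_n$ --- to a domain-uniform bound $|S_n(z)-H(z)|\le C n^{-\psi}$ valid for $z$ ranging over $\mathcal V$ requires re-examining the Binder--Chayes--Lei argument at interior points and tracking the dependence of the constants on the domain. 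Everything else is bookkeeping: matching the marked-point conventions, choosing the stopping time $T$ and the threshold $n_1$, and assembling the constants.
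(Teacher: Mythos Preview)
Your proposal is correct and follows essentially the same route as the paper: the paper's own argument is the one-line reduction ``by Proposition \eqref{percks}, Lemma \eqref{lem:mgo}, Proposition \eqref{sfp}, and Theorem \eqref{Obscvg}, we can now apply Theorem \eqref{mainthm}'', together with the assertion that the CCS observable is a non-degenerate BPZ solution with $\kappa=6$. You have simply unpacked this into the explicit verification of each clause of Definition \eqref{def:family}, and you correctly flag the two points the paper treats as routine --- the non-degeneracy/$\kappa=6$ computation and the passage from the tip estimate of Theorem \eqref{Obscvg} to a uniform interior bound on $|S_n-H|$ over $\mathcal V$ --- as the places where actual work sits.
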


\begin{remark}
The authors believe that modifications of the arguments in \cite{BCL} could lead to a full convergence statement.
\end{remark}

\begin{remark}
 Notice that under this modified percolation model, we still maintain the reversibility of the exploration path. Let $\omega$ be a simple polygonal path from $a^\delta$ to $c^\delta$.
Suppose that the corresponding \textit{path designate} is the sequence
\[
\left[ H_{0,1}, (\mathcal{F}_1,h_1^e,h_1^x), H_{1,2}, (\mathcal{F}_2,h_2^e,h_2^x), H_{2,3}, \cdots, (\mathcal{F}_K,h_K^e,h_K^x), H_{K,K+1} \right]
\]
where $\mathcal{F}_1,\cdots \mathcal{F}_K$ are flowers in $\Omega^\delta$ with $h_j^e$ and $h_j^x$ are the entrance and exit petals in the $j^{th}$ flower and for $1\leq j\leq K-1, \; H_{j,j+1}$ is a path in the complement of flowers which connects $h_j^x$ to $h_{j+1}^e$.
 That is, we are not viewing the microscopic description where we have to specifying how the path got between entry and exit petals. With a small loss of generality we are also assuming that the path only visits the flower once else we would have to specify the first entrance and exit petals, the second entrance and exit petals, etc.

 Let $\gamma^\delta$ be a chordal exploration process from $a^\delta$ to $c^\delta$ in $\Omega^\delta$ and $\hat\gamma^\delta$ be a chordal exploration process from $c^\delta$ to $a^\delta$ in $\Omega^\delta$. Recall that all petal arrangements are independent, all flowers are configured independently and these in turn are independent of the background filler sites. Thus the exploration process generated by the colouring algorithm given previously, excluding colouring of flowers, is independent and flowers are independent of background filler sites. Thus, by the colouring algorithm we have:
 \begin{align*}
     \IP(\gamma^\delta=\omega) = \left(\frac{1}{2} \right)^{l(H_{0,1})} p_1  \left(\frac{1}{2} \right)^{l(H_{1,2})} \cdots p_K  \left(\frac{1}{2} \right)^{l(H_{K,K+1})}
 \end{align*}
where $l(H_{j,j+1})$ is the number of coloured hexagons in $H_{j,j+1}$ produced by the colouring algorithm on the event $\gamma^\delta = \omega$ and $p_j$ is the appropriate conditional probabilities on each flower of a petal or iris given by the colouring algorithm. Notice that on the event $\gamma^\delta=\hat\gamma^\delta=\omega$ for any hexagon in $\Omega^\delta$ either it is coloured by both the colouring algorithm for $\gamma^\delta$ and the colouring algorithm for $\hat\gamma^\delta$ or by neither. Therefore, we have the following lemma:

\begin{lemma}
Suppose $\Omega^\delta$ is a simply connected domain in the $\delta$-hexagonal lattice with a predetermined flower arrangement. For any simple polygonal path $\omega$ from $a^\delta$ to $c^\delta$ we have
\[
\IP(\gamma^\delta=\omega)=\IP(\hat\gamma^\delta=\omega)
\]
\end{lemma}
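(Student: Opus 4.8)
The plan is to run the same colouring-algorithm decomposition for $\hat\gamma^\delta$ that was just carried out for $\gamma^\delta$, and then match the two resulting products term by term using the observation about which hexagons get coloured. First I would fix a simple polygonal path $\omega$ from $a^\delta$ to $c^\delta$ and record its path designate $[H_{0,1}, (\mathcal F_1, h_1^e, h_1^x), H_{1,2}, \dots, (\mathcal F_K, h_K^e, h_K^x), H_{K,K+1}]$. Read as a path from $c^\delta$ to $a^\delta$, the same curve $\omega$ has path designate $[H_{K,K+1}, (\mathcal F_K, h_K^x, h_K^e), H_{K-1,K}, \dots, (\mathcal F_1, h_1^x, h_1^e), H_{0,1}]$: the same flowers in the opposite order, with entrance and exit petals interchanged, separated by the same filler paths in the opposite order.

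Because petal arrangements are mutually independent, each flower is configured independently of everything else, and both are independent of the background filler sites, exactly the same conditioning computation used for $\gamma^\delta$ applies verbatim to $\hat\gamma^\delta$ and gives
\[
\IP(\hat\gamma^\delta = \omega) = \left(\tfrac12\right)^{l'(H_{K,K+1})}\, p'_K\, \left(\tfrac12\right)^{l'(H_{K-1,K})}\cdots p'_1\, \left(\tfrac12\right)^{l'(H_{0,1})},
\]
where $l'(H_{j,j+1})$ is the number of filler hexagons coloured by the reverse colouring algorithm on the event $\{\hat\gamma^\delta=\omega\}$ and $p'_j$ is the product of the conditional probabilities of the petals/iris coloured while the reverse process traverses $\mathcal F_j$. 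It then remains to identify this product with the one obtained for $\gamma^\delta$.

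Next I would match the two products. For the filler factors, the key input is the observation already noted: on the event $\{\gamma^\delta=\hat\gamma^\delta=\omega\}$ a hexagon is coloured by the forward algorithm if and only if it is coloured by the reverse algorithm. Applying this to the non-flower hexagons shows $\sum_j l(H_{j,j+1}) = \sum_j l'(H_{j,j+1})$, so the two powers of $\tfrac12$ agree. For the flower factors, apply the same observation restricted to the hexagons of a fixed flower $\mathcal F_j$: the set of hexagons of $\mathcal F_j$ revealed by the forward run (routing the path from $h_j^e$ to $h_j^x$) coincides with the set revealed by the reverse run (routing from $h_j^x$ to $h_j^e$), and on $\{\gamma^\delta=\hat\gamma^\delta=\omega\}$ their colours are forced by $\omega$ to be the same, up to the global blue/yellow symmetry of the flower law, which preserves probabilities. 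Now $p_j$ and $p'_j$ are each products of successive conditional probabilities which, by the tower property, telescope to the probability of that one fixed revealed partial configuration of $\mathcal F_j$ — a quantity not depending on the order in which the hexagons of $\mathcal F_j$ are revealed. Hence $p_j = p'_j$ for every $j$, and multiplying everything together gives $\IP(\gamma^\delta=\omega)=\IP(\hat\gamma^\delta=\omega)$.

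The main obstacle is the bookkeeping behind that observation: proving that the forward and reverse colouring algorithms, when both are conditioned to produce the same polygonal path $\omega$, touch exactly the same hexagons. This reduces to checking that the colouring rules only ever reveal hexagons geometrically forced by $\omega$ — the hexagons immediately flanking the traced segments and, when a flower is entered, the petals that must be inspected to decide where the path leaves the iris — and that this forced set is manifestly orientation-independent. Once this is in place, the rest is just the tower property together with the blue/yellow symmetry built into the model (equal weights $a,a,s$ with $2a+3s=1$ under $a^2\ge 2s^2$, and pure states with probability $\tfrac12$).
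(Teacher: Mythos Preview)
Your proposal is correct and follows exactly the approach the paper takes: the paper simply records the product formula for $\IP(\gamma^\delta=\omega)$, notes the observation that on $\{\gamma^\delta=\hat\gamma^\delta=\omega\}$ the forward and reverse colouring algorithms touch the same set of hexagons, and declares the lemma as an immediate consequence. You have fleshed out the matching of filler and flower factors (via the tower property and the blue/yellow symmetry of the flower law) that the paper leaves implicit, but the underlying argument is the same.
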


This lemma directly implies the following lemma.
\begin{lemma}
For any simply connected domain $\Omega^\delta$ with predetermined flower arrangement, the percolation exploration path from $a^\delta$ to $c^\delta$ in $\Omega^\delta$ has the same distribution as the time-reversal of the percolation exploration path from $c^\delta$ to $a^\delta$ in $\Omega^\delta$.
\end{lemma}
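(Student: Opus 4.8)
The plan is to read the statement off the preceding lemma, which already compares the atom weights of the forward and backward exploration paths. Let $\mathcal P$ be the finite set of simple polygonal paths in $\Omega^\delta$ oriented from $a^\delta$ to $c^\delta$, and for $\omega\in\mathcal P$ let $\bar\omega$ denote the same lattice path oriented from $c^\delta$ to $a^\delta$. The exploration path is almost surely a simple polygonal path, so $\gamma^\delta$ is almost surely an element of $\mathcal P$, and the time reversal $\overleftarrow{\hat\gamma^\delta}$ of the exploration from $c^\delta$ to $a^\delta$ is likewise an element of $\mathcal P$. Since these are probability measures on a finite set, the two laws coincide if and only if $\IP(\gamma^\delta=\omega)=\IP(\overleftarrow{\hat\gamma^\delta}=\omega)$ for every $\omega\in\mathcal P$.

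First I would unwind the definition of time reversal: the event $\{\overleftarrow{\hat\gamma^\delta}=\omega\}$ is exactly the event $\{\hat\gamma^\delta=\bar\omega\}$, so $\IP(\overleftarrow{\hat\gamma^\delta}=\omega)=\IP(\hat\gamma^\delta=\bar\omega)$. Since a self-avoiding path from $a^\delta$ to $c^\delta$ and its reversal describe the same unoriented lattice path, the preceding lemma — whose right-hand side $\IP(\hat\gamma^\delta=\omega)$ is to be read, as there, for the unoriented path underlying $\omega$, i.e. as $\IP(\hat\gamma^\delta=\bar\omega)$ — gives $\IP(\hat\gamma^\delta=\bar\omega)=\IP(\gamma^\delta=\omega)$. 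Chaining the two identities yields $\IP(\overleftarrow{\hat\gamma^\delta}=\omega)=\IP(\gamma^\delta=\omega)$ for all $\omega\in\mathcal P$, which is the assertion. I would also note that, by Proposition \eqref{admiss}, the same equality persists after replacing $\Omega^\delta$ by $\Omega^\delta$ slit along any initial portion of the path, so the identity is consistent with the domain Markov property used elsewhere.

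The substance has in fact already been supplied in the computation preceding the previous lemma, where $\IP(\gamma^\delta=\omega)$ is factored as $(1/2)^{\sum_j l(H_{j,j+1})}\prod_j p_j$ along the path designate of $\omega$, and the colouring-consistency observation — on $\{\gamma^\delta=\hat\gamma^\delta=\omega\}$ each hexagon of $\Omega^\delta$ is coloured identically, or left uncoloured, by the forward and backward algorithms — shows that this product is unchanged when $\omega$ is reversed (reversal merely reverses the order of the factors and interchanges the entrance and exit petal of each flower). The one delicate point is the standing simplification that $\omega$ visits each flower at most once; in general the path designate must be enlarged to list the successive entrance/exit petal pairs at a repeatedly visited flower, but reversing $\omega$ only reverses the order of these pairs and swaps entrance with exit in each, so the colouring-consistency argument applies verbatim. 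Hence the only genuine obstacle — the flower-by-flower symmetry of the conditional weights $p_j$ — is already resolved, and no further estimate is required.
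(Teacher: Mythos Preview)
Your argument is correct and is exactly the paper's approach: the paper simply states that the preceding lemma ``directly implies'' this one, and you have spelled out the (straightforward) implication by matching atom probabilities under time reversal. Your extra remarks on the path-designate factorization and on flowers visited multiple times are consistent with the paper's setup but go beyond what the paper itself records.
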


\begin{question} Is it possible to use reversibility to extend the polynomial convergence for the whole curve percolation exploration process?
\end{question}

\end{remark}


\end{document}